\newtheorem{theorem}{Theorem}[]
\newtheorem{proposition}[theorem]{Proposition}
\newtheorem{lemma}[theorem]{Lemma}
\theoremstyle{definition}
\def\noi{\noindent}
\def\t{{\mathcal T}}
\def\ve{\varepsilon}
\def\wt{\widetilde}
\def\wh{\widehat}
\def\be{{\bf e}}
\def\bp{{\bf p}}
\def\bP{{\mathbf P}}
\def\bE{{\mathbf E}}
\def\la{\longrightarrow}
\def\T{{\mathbb T}}
\def\R{{\mathbb R}}
\def\P{{\mathbb P}}
\def\E{{\mathbb E}}
\def\N{{\mathbb N}}
\def\Z{{\mathbb Z}}
\def\t{\mathcal{T}}
\def\z{\mathcal{Z}}
\def\ts{\mathscr{T}}
\def\bt{\mathbf{t}}
\def\build#1_#2^#3{\mathrel{
\mathop{\kern 0pt#1}\limits_{#2}^{#3}}}
\def\ind{{\bf 1}_}
\title{The range of tree-indexed random walk}
\author{Jean-Fran\c cois LE GALL, Shen LIN\\
\\{\small Universit\'e Paris-Sud}}
\date{\tiny\today}
\begin{document}

\maketitle

\begin{abstract} We provide asymptotics for the range $R_n$ of a random walk 
on the $d$-dimensional lattice indexed by a random tree 
with $n$ vertices. Using Kingman's subadditive ergodic theorem, we prove under
general assumptions
that $n^{-1}R_n$ converges to a constant, and we give conditions ensuring that
the limiting constant is strictly positive. On the other hand, in dimension $4$ and in
the case of a symmetric random walk with exponential moments, we prove that
$R_n$ grows like $n/\log n$. We apply our results to asymptotics for the
range of branching random walk when the initial size of the population
tends to infinity. 

\smallskip
\noindent {\bf Keywords.} Tree-indexed random walk, range, discrete snake, branching random walk, subadditive ergodic theorem.

\smallskip
\noindent{\bf AMS 2010 Classification Numbers.} 60G50, 60J80
\end{abstract}

\section{Introduction}

The main goal of this work is to derive asymptotics for the
number of distinct sites of the lattice visited by a tree-indexed random walk.
Asymptotics for the range of an ordinary random walk on the 
$d$-dimensional lattice $\Z^d$ have been studied 
extensively since the pioneering work of Dvoretzky and Erd\"os
\cite{DE}. Consider for simplicity the case of a simple random walk
on $\Z^d$, and, for every integer $n\geq 1$, let ${\rm R}_n$
be the number of distinct sites of $\Z^d$ visited by the random walk 
up to time $n$. When $d\geq 3$, let $q_d>0$ be the probability that
the random walk never returns to its starting point. Then, 
\begin{enumerate}
\item[$\bullet$] if $d\geq 3$, 
$$\frac{1}{n}\,{\rm R}_n \build{\la}_{n\to\infty}^{\rm a.s.} q_d\,,$$
\item[$\bullet$] if $d=2$,
$$\frac{\log n}{n}\,{\rm R}_n \build{\la}_{n\to\infty}^{\rm a.s.} \pi\,,$$
\item[$\bullet$] if $d=1$,
$$n^{-1/2}\,{\rm R}_n \build{\la}_{n\to\infty}^{\rm (d)} \sup_{0\leq t\leq 1} B_t
- \inf_{0\leq t\leq 1} B_t\,,$$
\end{enumerate}
where $\build{\la}_{}^{\rm(d)}$ indicates convergence in distribution and
$(B_t)_{t\geq 0}$ is a standard linear Brownian motion. The cases
$d\geq 3$ and $d=2$ were obtained in \cite{DE}, whereas the case $d=1$
is a very easy consequence of Donsker's invariance theorem (see e.g.\,\cite{JP2}). The preceding
asymptotics have been extended to much more general random walks. In
particular, for any random walk in $\Z^d$, an application of Kingman's subadditive ergodic theorem \cite{King}
shows that the quantity ${\rm R}_n/n$ converges a.s.~to the probability that the random walk
does not return to its starting point (which is positive if the random walk is transient). See also \cite{JP2} for the almost sure convergence
of the (suitably normalized) range of an arbitrary recurrent random walk in the plane, \cite{JP1} for a
central limit theorem for the range of transient random walk, \cite{LGRW} for a non-Gaussian
central limit theorem in the plane and \cite{LGR} for a general study of the range of random
walks in the domain of attraction of a stable distribution.

In the present work, we discuss similar asymptotics
for tree-indexed random walk. We consider (discrete) plane trees, which are
rooted ordered trees that can be viewed as describing the genealogy of
a population starting with one ancestor or root, which is usually denoted by 
the symbol $\varnothing$. Given such a tree $\t$ and a probability measure 
$\theta$ on $\Z^d$, we can consider the random
walk with jump distribution $\theta$ indexed by the tree $\t$. This means that we assign a 
(random) spatial
location $Z_{\t}(u)\in \Z^d$ to every vertex $u$ of $\t$, in the following way. First, the
spatial location $Z_{\t}(\varnothing)$ of the root is the origin of $\Z^d$.
Then, we assign
independently to every
edge $e$ of the tree $\t$ a random variable $X_e$ distributed according to $\theta$, and we let 
the spatial location $Z_{\t}(u)$  of the vertex $u$ be the sum of the
quantities $X_e$ over all edges $e$ belonging to the simple path 
from $\varnothing$ to $u$ in the tree. The number of distinct spatial locations 
is called the range of the tree-indexed random walk $Z_{\t}$.

Let us state a particular case of our results.

\begin{theorem}
\label{treeSRW}
Let $\theta$ be a probability distribution on $\Z^d$, which is symmetric
and has finite support. Assume that $\theta$ is not supported on a 
strict subgroup of $\Z^d$.
For every integer $n\geq 1$, let $\t_n$ be a random tree uniformly distributed
over all plane trees with $n$ vertices. Conditionally given
$\t_n$, let $Z_{\t_{n}}$ be a random walk with jump distribution
$\theta$ indexed by $\t_n$,
and let $\mathcal{R}_n$ stand for the range of $Z_{\t_{n}}$. Then,
\begin{enumerate}
\item[$\bullet$] if $d\geq 5$, 
$$\frac{1}{n}\,\mathcal{R}_n \build{\la}_{n\to\infty}^{\rm (P)} c_\theta\,,$$
where $c_\theta>0$ is a constant depending on $\theta$, and $\build{\la}_{}^{\rm (P)}$ indicates convergence in probability;
\item[$\bullet$] if $d=4$,
$$\frac{\log n}{n}\,\mathcal{R}_n \build{\la}_{n\to\infty}^{\rm (P)} 
8\,\pi^2\,\sigma^4\,,$$
where $\sigma^2=({\rm det}\,M_\theta)^{1/4}$, with $M_\theta$ denoting the covariance matrix of $\theta$;
\item[$\bullet$] if $d\leq 3$, 
$$n^{-d/4}\,\mathcal{R}_n \build{\la}_{n\to\infty}^{\rm (d)} 
c_\theta\,\lambda_d({\rm supp}(\mathcal{I}))\,,$$
where $c_\theta=2^{d/4}({\rm det} M_{\theta})^{1/2}$ is a constant depending on $\theta$, and $ \lambda_d({\rm supp}(\mathcal{I}))$ stands for the 
Lebesgue measure of the support of the random measure on $\R^{d}$ known as
ISE (Integrated Super-Brownian Excursion). 
\end{enumerate}
\end{theorem}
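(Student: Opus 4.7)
I treat the three regimes separately, exploiting that the uniform plane tree $\t_n$ on $n$ vertices coincides in law with a critical Galton-Watson tree with geometric$(1/2)$ offspring distribution conditioned on total size $n$. This identification gives access to two standard tools: Aldous' theorem (the rescaled contour function of $\t_n$ converges to a Brownian excursion, and the tree-indexed walk rescaled by $n^{-1/4}$ converges to the ISE measure $\mathcal{I}$), and decompositions of $\t_n$ into independent subtrees via the Lukasiewicz path or a rerooting argument.

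For $d\leq 3$ the argument is driven by the weak convergence
$$\mu_n := \frac{1}{n}\sum_{u \in \t_n}\delta_{n^{-1/4}Z_{\t_n}(u)} \,\longrightarrow\, \mathcal{I}.$$
Since $d<4$, each unit of $\R^d$-volume inside $\mathrm{supp}(\mathcal{I})$ receives of order $n^{1-d/4}\to \infty$ tree vertices, so the distinct lattice sites visited asymptotically tile $\mathrm{supp}(\mathcal{I})$ at spatial scale $n^{-1/4}$. A Riemann-sum comparison then yields $n^{-d/4}\mathcal{R}_n \to c_\theta\,\lambda_d(\mathrm{supp}(\mathcal{I}))$ in distribution, the constant $c_\theta$ being the density of the rescaled lattice after the covariance change of variables combined with the CRT normalization. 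The technical point is to rule out ``empty cells'' inside the support: one needs a quantitative lower bound on the local density of ISE to ensure that, with high probability, almost every rescaled lattice cell in $\mathrm{supp}(\mathcal{I})$ is actually hit.

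For $d\geq 5$ I would invoke the general linear-growth theorem announced in the abstract: for random walk indexed by an infinite forest of independent critical Galton-Watson trees (all rooted at $0$), Kingman's subadditive ergodic theorem applied to a suitably superadditive functional (for instance, the number of sites visited from the first $N$ vertices that are not revisited later) gives a.s.\ convergence $N^{-1}R_N \to c_\theta$. The transfer to the conditioned tree $\t_n$ is a standard coupling via the Lukasiewicz walk. Positivity of $c_\theta$ for $d\geq 5$ reduces to a bound on the expected number of vertex pairs of $\t_n$ at the same spatial position, which involves the Green function of the tree-indexed walk; this Green function is summable precisely when $d\geq 5$, dimension $4$ being critical.

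The critical case $d=4$ is the main obstacle. The plan is a first and second moment computation for $\mathcal{R}_n$. For the first moment, write
$$\mathcal{R}_n = \sum_{u\in \t_n} \mathbf{1}\bigl\{u \text{ is the first vertex in depth-first order at site } Z_{\t_n}(u)\bigr\}$$
and estimate the indicator's probability using sharp asymptotics of the $\Z^4$ Green function, which is where the constant $8\pi^2\sigma^4$ and the $\log n$ factor emerge. For concentration, the second moment requires sharp pairwise joint-visit probabilities, obtained from a Gaussian local limit theorem uniformly at the relevant spatial scale, and it is exactly here that the exponential-moment assumption on $\theta$ is used. The delicate point is that at the critical dimension the mean and standard deviation of $\mathcal{R}_n$ are of the same order, so leading constants must be matched in both moments; the main combinatorial tool is a decomposition of $\t_n$ along the path joining two distinguished vertices into independent subtrees, which decorrelates the contributions at exactly the scale set by the logarithmic correction.
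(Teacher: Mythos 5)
Your plan is in the right spirit for all three regimes, and for $d\geq 5$ it essentially matches the paper's argument; but the $d=4$ sketch has a genuine gap, and a couple of points should be corrected.

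For $d\geq 5$, the paper does indeed build an invariant measure for a shift on a space of infinite trees and apply Kingman's theorem, then transfers to the conditioned tree. Two caveats: the range is \emph{subadditive}, not superadditive (probably a slip on your part), and the paper works not with an ``infinite forest of GW trees all rooted at $0$'' but with a single infinite tree having a spine labelled by $\Z_-$ with GW subtrees hanging off, together with a shift that re-roots at the first non-spine vertex; this is precisely what makes the probability measure shift-invariant. A naive ``advance one vertex in the depth-first traversal of a forest'' shift does not leave the forest law invariant, so you would need to supply the correct invariant structure. Also, the transfer from the infinite-tree statement to $\t_n$ is not merely a standard Lukasiewicz coupling: the paper passes through a tree conditioned to have size $>n$ (Proposition~\ref{rangecondi}) and then uses an absolute-continuity estimate (Kemperman's formula and a local limit theorem, leading to \eqref{supertech1}--\eqref{supertech4}) to get to size exactly $n+1$.

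For $d=4$, the first/second moment outline is the right skeleton and it is what the paper does (Theorem~\ref{freesnake4D}, Proposition~\ref{secondmo}, Lemma~\ref{estimoment1}), but your proposal is missing the single estimate that everything else hangs on, namely Proposition~\ref{firstesti}: for the free discrete snake, $\P(\wh W_k\ne 0\ \forall k\in\llbracket 1,n\rrbracket)\sim 4\pi^2\sigma^4/\log n$. The constant and the $\log n$ do \emph{not} emerge directly from the $\Z^4$ Green function asymptotics $G(x)\sim (2\pi^2\sigma^2)^{-1}|x|^{-2}$; they come from an expected occupation identity (\eqref{Green}) reducing the problem to showing that $\sum_{k\le m}G(S_k)$ concentrates around $(4\pi^2\sigma^4)^{-1}\log m$ (Lemma~\ref{keyestimate}), and proving that concentration is where the work is. Moreover, the exponential-moment hypothesis is used for the multidimensional KMT/Zaitsev strong approximation that couples $S$ to a four-dimensional Brownian motion and lets one import the Bessel-process estimate of Lemma~\ref{key-estimate2}; it is not used for a local limit theorem, as your note suggests. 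Without identifying this no-return estimate and the KMT step, the ``sharp pairwise joint-visit probabilities'' in your second-moment plan cannot be controlled at the required precision, and the matching of leading constants in both moments that you correctly flag as essential would not go through.

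For $d\leq 3$, the paper explicitly defers the proof to the companion paper~\cite{LGL}, so there is nothing to compare against here; your sketch via convergence of $\mu_n$ to ISE and a Riemann-sum comparison is a reasonable heuristic, and you correctly identify that ruling out ``empty cells'' (a quantitative lower bound on the local density of ISE at its support) is the key technical hurdle.
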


Notice the obvious analogy with the results for the range of 
(ordinary) random walk
that were recalled above. At an intuitive level, $\mathcal{R}_n$
is likely to be smaller than the range ${\rm R}_n$ of ordinary random walk, because 
one expects many more self-intersections in the tree-indexed 
case. This is reflected in the fact that the ``critical dimension''
is now $d=4$ instead of $d=2$. In the same way as $d=2$ is critical for the recurrence
of random walk on $\Z^d$, one may say that $d=4$ is critical for the 
recurrence of tree-indexed random walk, in the sense that for 
random walk indexed by a ``typical'' large tree of size $n$, the number of returns to
the origin will grow logarithmically with $n$. Furthermore, one may notice that 
the set of all spatial locations of $\t_n$ is contained in the ball of radius $Cn^{1/4}$
centered at the origin, with a probability
close to $1$ 
if the constant $C$ is sufficiently large (see 
Janson and Marckert \cite{JM} or Kesten \cite{Kes} in a slightly different setting),
so that the range $\mathcal{R}_n$ is at most of order $n^{d/4}$ in dimension $d\leq 3$. 
We finally mention that the
limiting constant $c_\theta$ in dimension $d\geq 5$ can again be interpreted 
as a probability of no return to the origin for 
random walk indexed by a certain infinite random tree: See Section 2 below
for more details. 

Let us emphasize that asymptotics of the type of Theorem~\ref{treeSRW}
hold in a much more general setting. Firstly, it is enough to
assume that the jump distribution $\theta$ is centered and
has sufficiently high moments (a little more is needed when $d=4$). Our argument to get the case
$d\geq 5$ of Theorem~\ref{treeSRW} relies on an application of Kingman's
subadditive ergodic theorem, which gives the convergence of 
$\frac{1}{n}\mathcal{R}_n$ to a (possibly vanishing) constant in any dimension $d$, without 
any moment assumption on $\theta$. Secondly, in all cases except the critical dimension $d=4$,
 we can handle more general random trees.
Our methods apply to Galton-Watson trees with an offspring distribution
having mean one and finite variance, which are conditioned to have exactly 
$n$ vertices. In the special case where the offspring distribution 
is geometric with parameter $1/2$, we recover uniformly distributed
plane trees, but the setting of conditioned Galton-Watson
trees includes other important ``combinatorial trees'' such
as binary trees or Cayley trees (see e.g.~\cite{LG1}). Some of our results
even hold for an offspring distribution with infinite variance in the
domain of attraction of a stable distribution.

In the present work, we deal with the cases $d\geq 5$ and $d=4$
of Theorem~\ref{treeSRW}, and the extensions that have just been 
described. The companion paper~\cite{LGL} will address the 
``subcritical'' case $d\leq 3$, which involves rather different 
methods and is closely related to the invariance principles 
connecting branching random walk with super-Brownian motion.

Let us turn to a more precise description of our 
main results and of our methods. In Section 2 below, we discuss the
convergence of $\frac{1}{n}\mathcal{R}_n$ in a general setting. The basic ingredient of the proof is the introduction of
a suitable probability measure on a certain set of infinite trees. Roughly speaking, for any
offspring distribution $\mu$ with mean one, we construct a random
infinite tree consisting of an infinite ``spine'' and, for each node of the spine,
of a random number of Galton-Watson trees with offspring distribution
$\mu$ that branch off the spine at this node. For a more precise description, see subsection~\ref{invariant-infinite-trees}. The law of this infinite tree
turns out to be invariant under a shift transformation, 
which basically involves
re-rooting the tree at the first vertex (in lexicographical order) that does not
belong to the spine. If we consider 
a random walk (with an arbitrary jump distribution $\theta$) indexed by
this infinite tree, the number of distinct locations of the random walk at
the first $n$ vertices of the infinite tree yields a subadditive process $R_n$,
to which we can apply Kingman's theorem in order to get the almost sure 
convergence of $\frac{1}{n}R_n$ to a constant (Theorem~\ref{subaddi}). One then needs to discuss the positivity of the limiting constant,
and this leads to conditions depending both on the offspring distribution 
$\mu$ and
on the jump distribution $\theta$. More precisely, we 
give a criterion (Proposition~\ref{suffcond}) involving the Green function of the random walk
and the generating function of $\mu$,
which ensures that the limiting constant is positive. In the case
when
$\mu$ has finite variance and if the jump distribution $\theta$ is centered
(with sufficiently high moments), this criterion is satisfied 
if $d\geq 5$. 
The preceding line of  reasoning is of course very similar to the 
classical application
of Kingman's theorem to the range of ordinary random walk. In the
present setting however, additional ingredients are needed to transfer
the asymptotics from the case of the infinite random tree to a single
Galton-Watson tree conditioned to have $n$ vertices. At this point we need
to assume that the offspring distribution $\mu$ has finite variance or
is in the domain of attraction of a stable distribution, so that we can use
known results~\cite{DLG} on the scaling limit of the height process 
associated with a sequence of Galton-Watson trees
with offspring distribution $\mu$: Applying these
results to the sequence of trees that branch off the spine of the infinite tree
yields information about the ``large'' trees in the sequence, which is 
essentially what we need to cover the case of a single Galton-Watson tree
conditioned to be large (Theorem~\ref{mainsuper}). The case $d\geq 5$ of
Theorem \ref{treeSRW} follows as a special case of the results in Section 2.

Section 3, which is the most technical part of the paper, is devoted to
the proof of a generalized version of the case $d=4$
of Theorem~\ref{treeSRW} (Theorem~\ref{rangecritisnake}). We restrict our attention to the
case when the offspring distribution is geometric with parameter $1/2$,
and we assume that the jump distribution $\theta$ is symmetric with small
exponential moments. While the symmetry assumption can presumably
be weakened without too much additional work, the existence of exponential
moments is used at a crucial point of our proof where we rely on 
the multidimensional extension of the celebrated Koml\' os-Major-Tusn\' ady strong
invariance principle. Our approach is based on the path-valued Markov chain
called the discrete snake. In our setting, this process, which 
we denote by $(W_n)_{n\geq 0}$, takes values in the space 
of all infinite paths $w:(-\infty,\zeta]\cap \Z \la \Z^4$, where 
$\zeta=\zeta(w)\in \Z$ is called the lifetime of $w$. If $\zeta_n$ denotes the
lifetime of $W_n$, the process $(\zeta_n)_{n\geq 0}$ evolves like
simple random walk on $\Z$. Furthermore, if $\zeta_{n+1}=\zeta_n -1$,
the path $W_{n+1}$ is obtained by restricting $W_n$
to the interval $(-\infty,\zeta_n-1]\cap \Z$, whereas if $\zeta_{n+1}=\zeta_n +1$,
the path $W_{n+1}$ is obtained by adding to $W_n$ one step 
distributed according to $\theta$. We assume that the initial value 
$W_0$ is just a path (indexed by negative times) of the random walk with jump distribution $\theta$
started from the origin.
Then the values of the discrete snake generate
a random walk indexed by an infinite random tree, which corresponds,
in the particular case of the geometric offspring distribution, to
the construction developed in Section 2. Note however that, in contrast with Section 2, the Markovian properties of the
discrete snake play a~very important role in Section 3. A key estimate
(Proposition~\ref{firstesti}) states that the probability that the ``head of
the discrete snake'' (that is the process $(W_k(\zeta_k))_{k\geq 0}$) does not return to
the origin before time $n$ behaves like $c/\log n$ for a certain
constant $c$. This is analogous to the well-known asymptotics for
the probability that random walk in the plane does not come back to its
starting point before time $n$, but the proof, which is developed in
subsection~\ref{mainest}, turns out to be much more
involved in our setting. The main result of Section 3 (Theorem \ref{rangecritisnake}) gives
the case $d=4$ of Theorem \ref{treeSRW} under slightly more general assumptions.

Finally, Section 4 applies the preceding results to asymptotics for the range of 
a branching random walk in $\Z^d$, $d\geq 4$, when the size of the initial 
population tends to infinity. This study
is related to the recent work of Lalley and Zheng \cite{LZ}
who discuss the number of distinct sites occupied by a 
nearest neighbor branching random walk in $\Z^d$
at a fixed time. Note that the genealogical structures of descendants of the different
initial particles are described by independent Galton-Watson trees, which
makes it possible to apply our results about the range of 
tree-indexed random walk. Still one needs to verify that points
that are visited by the descendants of two distinct initial particles give a 
negligible contribution in the limit. The analogous problem for low dimensions
$d\leq 3$ will be addressed in \cite{LGL}.

\medskip
\noi{\it Notation.} We use the notation $\llbracket a, b\rrbracket \colonequals [a,b]\cap \Z$
for $a,b\in \Z$, with $a\leq b$. Similarly, $\rrbracket -\infty,a\rrbracket \colonequals(-\infty,a]\cap \Z$
for $a\in \Z$. For any finite set $A$, $\#A$ denotes the cardinality of $A$.

\section{Linear growth of the range}
\label{supercri}


\subsection{Finite trees}
\label{fitree}

We use the standard formalism for plane trees. We set
$$\mathcal{U} \colonequals\bigcup_{n=0}^\infty \N^n,$$
where $\N=\{1,2,\ldots\}$ and $\N^0=\{\varnothing\}$. If $u=(u_1,\ldots,u_n)\in\mathcal{U}$,
we set $|u|=n$ (in particular $|\varnothing|=0$).
We write $\prec$ for the lexicographical order on $\mathcal{U}$, so that $\varnothing \prec 1 
\prec (1,1) \prec 2$ for instance. 

If $u,v\in\mathcal{U}$, $uv$ stands for the
concatenation of $u$ and $v$. In particular $\varnothing u=u \varnothing =u$. 
The genealogical (partial) order $\ll$ is then defined by 
saying that $u \ll v$ if and only if 
$v=uw$ for some $w\in \mathcal{U}$.

A plane tree (also called rooted ordered tree) $ \t$ is a finite subset of $\mathcal{U}$
such that the following holds:
\begin{enumerate}
\item[(i)] $\varnothing\in \t$.

\item[(ii)] If $u=(u_1,\ldots,u_n)\in \t\backslash\{\varnothing\}$ then 
$\widehat  u:=(u_1,\ldots,u_{n-1})\in \t$.

\item[(iii)] For every $u=(u_1,\ldots,u_n)\in\t$, there exists an integer $k_u(\t)\geq 0$
such that, for every $j\in\N$,  $(u_1,\ldots,u_n,j)\in\t$ if and only if $1\leq j\leq
k_u(\t)$.
\end{enumerate}

The notions of a child and a parent of a vertex of $\t$ are defined in an obvious way.
The quantity $k_u(\t)$ in (iii) is the number of children of $u$ in $\t$.
If $u\in \t$, we write $[\t]_u=\{v\in\mathcal{U}:uv\in\t\}$, which corresponds to the
subtree of descendants of $u$ in $\t$. We denote the set of all plane trees by $\T_f$.

Throughout this work, we consider a probability measure $\mu$ on $\Z_+$, which is critical in the sense that
$$\sum_{k=0}^\infty k\,\mu(k)=1.$$
We exclude the degenerate case where $\mu(1)=1$. The law of the Galton-Watson tree with offspring
distribution $\mu$ is a probability measure on the space $\T_f$, which
we denote by $\Pi_\mu$ (see e.g.~\cite[Section 1]{LG1}). 

We also consider a 
random walk $S=(S_k)_{k\geq 0}$ in $\Z^d$, with jump distribution  $\theta$.
We assume that $S
$ is adapted (i.e.~$\theta$ is not supported on a strict subgroup
of $\Z^d$).
It will be convenient to assume that the random walk $S$ 
starts from $x$ under the
probability measure $P_x$, for every $x\in \Z^d$. 

A ($d$-dimensional) spatial tree is a pair $(\t,(z_u)_{u\in\t})$
where $\t\in \T_f$ and $z_u\in \Z^d$ for every $u\in \t$. Let $\T_f^*$
be the set of all spatial trees. We write
$\Pi^*_{\mu,\theta}$ for the probability distribution on $\T_f^*$ under which $\t$ is distributed according to $\Pi_\mu$ and,
conditionally on $\t$, the ``spatial locations'' $(z_u)_{u\in\t}$ are distributed as random walk indexed by $\t$,
with jump distribution $\theta$, and started from $0$ at the root $\varnothing$
(see the definition given in Section 1). We then set
$$a_{\mu,\theta} \colonequals \Pi^*_{\mu,\theta}(z_u\not =0,\,\forall u\in \t\backslash\{\varnothing\}),$$
and, for every $y\in\Z^d$,
$$h_{\mu,\theta}(y) \colonequals \Pi^*_{\mu,\theta}(z_u\not = - y,\,\forall u\in \t).$$
Notice that $a_{\mu,\theta}>0$, simply because with positive probability
a tree distributed according to $\Pi_\mu$ consists only of the root.

\subsection{Infinite trees}
\label{infitree}

We now introduce a certain class of infinite trees. Each 
tree in this class will consist of an infinite ray or spine starting from the root, and finite subtrees
branching off every node of this infinite ray. 
We label the vertices of the infinite ray by nonpositive integers $0,-1,-2,\ldots$.
The reason
for labelling the vertices of the spine by negative integers comes from the fact that
$-1$ is viewed as the parent of $0$, $-2$ as the parent of $-1$, and so on.

More precisely, we consider the set
$$\mathcal{V} \colonequals \Z_- \times \mathcal{U}$$
where $\Z_-=\{0,-1,-2,\ldots\}$. For every $j\in\Z_-$, we identify the element
$(j,\varnothing)$ of $\mathcal{V}$ with the integer $j$, and we thus view 
$\Z_-$ as a subset of $\mathcal{V}$. We define the lexicographical order on
$\mathcal{V}$ as follows. If $j,j'\in \Z_-$, 
we have $j\prec j'$ if and only if $j\leq j'$.  If $u\in \mathcal{U}\backslash\{\varnothing\}$,
we have always $j'\prec (j,u)$. If $u,u'\in \mathcal{U}\backslash\{\varnothing\}$,
we have $(j,u)\prec (j',u')$ if either $j>j'$, or
$j=j'$ and $u\prec u'$. The genealogical (partial) order $\ll$ on $\mathcal{V}$ is defined in
an obvious way: in agreement with the preceding heuristic
interpretation, the property $j\ll j'$ for $j,j'\in\Z_-$ holds if and only if $j\leq j'$. 

Let $\t$ be a subset of $\mathcal{V}$ such that $\Z_-\subset \t$. For every
$j\in\Z_-$, we set
$$\t_j:=\{u\in \mathcal{U}: (j,u)\in \t\}.$$
We say that $\t$ is an {\it infinite tree} if, for every $j\in \Z_-$, $\t_j$
is a (finite) plane tree, and furthermore $\t\backslash \Z_-$ is
infinite. We write $\T$ for the set of all infinite trees. By convention,
the root of an infinite tree $\t$ is the vertex $0$. Clearly, $\t$ is determined by
the collection $(\t_j)_{j\in \Z_-}$. Note that the lexicographical order of vertices
corresponds to the order of visit when one ``moves around'' the tree in clockwise
order, starting from the ``bottom'' of the spine and assuming that the
``subtrees'' $\t_j$ are drawn on the right side of the spine, as in Fig.1.

We next define a shift transformation $\tau$ on the space $\T$. 
Starting from an infinite tree $\t$, its image $\tau(\t)=\t'$
is obtained informally as follows. We look for the first vertex (in
lexicographical order) of $\t\backslash \Z_-$. Call this vertex $v$. 
We then ``re-root'' the tree $\t$ at $v$ and, in the case when $v$ is not
a child of $0$ (or equivalently if $\t_0=\{\varnothing\}$), we remove the vertices 
of the spine that are strict descendants of the parent of $v$. 

For a more formal definition, let $k\in \Z_-$ be the unique integer such that $v\in\t_k$ (necessarily, $v=(k,1)$). Then, $\t'$ is determined by requiring that: 
\begin{itemize}
\item  $\t'_j =\t_{j+k+1}$ if $j\leq -2$;
\item  $\t'_0= [\t_k]_1$;
\item  $\t'_{-1}$ is the unique plane tree such that there exists a bijection from 
$\t_k\backslash \{u\in\t_k : 1\ll u\}$ onto $\t'_{-1}$ that preserves both the 
lexicographical order and the genealogical order. 
\end{itemize}

\begin{figure}[!htbp]
 \begin{center}
 \includegraphics[width=14.5cm]{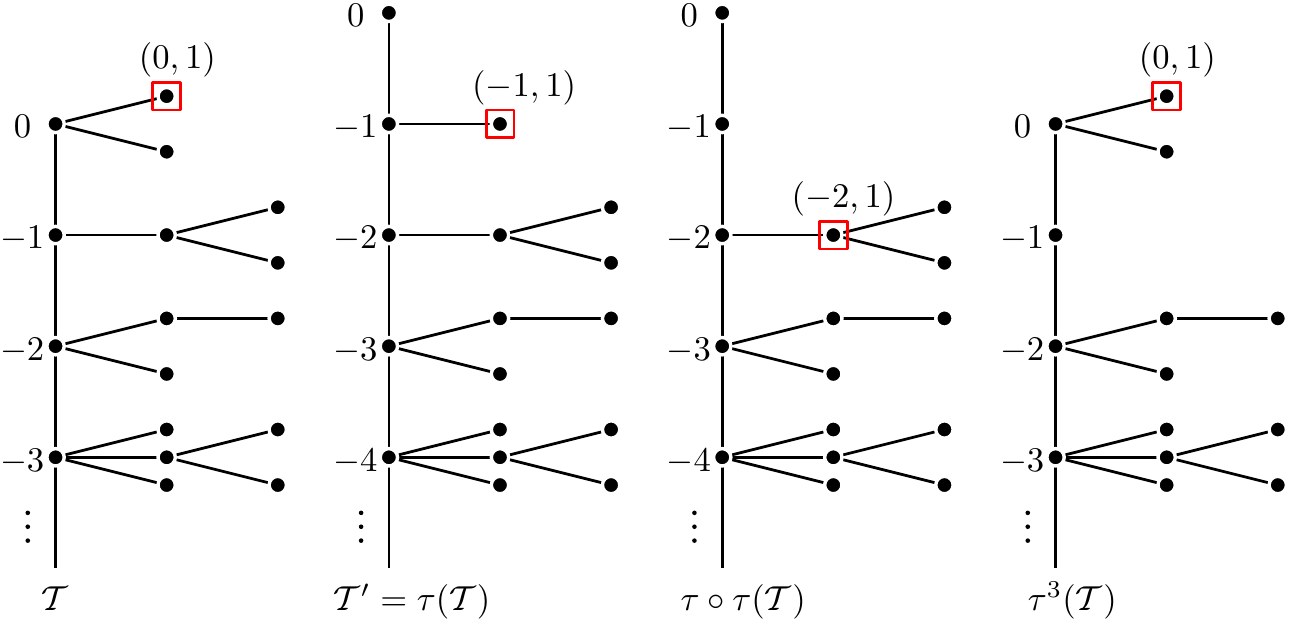}
 \caption{\textsf{The first 3 iterations of the shift transformation on an infinite tree $\t$. At each step, the marked vertex will become the new root after the shift.} \label{fig-shift}}
 \end{center}
\end{figure}

Fig.~\ref{fig-shift} explains the construction of $\t'$ better than the formal definition.

\subsection{The invariant measure on infinite trees}
\label{invariant-infinite-trees}
Let $\bP_\mu$ be the probability measure on $\T$ that is determined by
the following conditions. Under $\bP_\mu(\mathrm{d}\t)$,
\begin{itemize}
\item the trees $\t_0,\t_{-1},\t_{-2},\ldots$ are independent;
\item $\t_0$ is distributed according to $\Pi_\mu$;
\item for every integer $j\leq -1$, 
$$\bP_\mu(k_\varnothing(\t_j)=n)= \mu([n+1,\infty)),$$
for every $n\geq 0$; furthermore, conditionally on $k_\varnothing(\t_j)=n$, the trees
$[\t_j]_1,[\t_j]_2,\ldots,[\t_j]_n$ are independent and distributed according to $\Pi_\mu$.
\end{itemize}
Notice that $\sum_{n\geq 0}\mu([n+1,\infty))=1$ due to the criticality of the probability measure $\mu$. The reason for introducing the probability measure $\bP_\mu$ comes from the next proposition.

\begin{proposition}
\label{invariantmeas}
The probability measure $\bP_\mu$ is invariant under the shift $\tau$.
\end{proposition}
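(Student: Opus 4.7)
The plan is to verify by direct computation that under $\bP_\mu$ the components $(\t'_j)_{j \leq 0}$ of the shifted tree $\t' \colonequals \tau(\t)$ satisfy exactly the three characterising conditions in the definition of $\bP_\mu$. First I translate the shift into a statement on the sequence $(\t_j)_{j \leq 0}$. Under $\bP_\mu$, $k_\varnothing(\t_0)$ vanishes with probability $\mu(0)$ while for $j \leq -1$ it vanishes with probability $\mu([1,\infty)) = 1 - \mu(0) < 1$; hence the random index
$$k^* \colonequals \max\{j \leq 0 : k_\varnothing(\t_j) \geq 1\}$$
is $\bP_\mu$-a.s.\ finite, the first vertex of $\t \setminus \Z_-$ in lexicographical order is $(k^*,1)$, and the formal definition of $\tau$ reduces to $\t'_0 = [\t_{k^*}]_1$, $\t'_{-1}=$ the tree obtained from $\t_{k^*}$ by removing the subtree rooted at $1$, and $\t'_j = \t_{j + k^* + 1}$ for $j \leq -2$. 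A geometric-sum computation gives $\bP_\mu(k^*=0)=1-\mu(0)$ and $\bP_\mu(k^*=-j) = \mu(0)^2(1-\mu(0))^{j-1}$ for $j \geq 1$.

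Next I would isolate the independence structure. On the event $\{k^* = k\}$ one has $(\t'_j)_{j \leq -2} = (\t_{k-1}, \t_{k-2}, \ldots)$; since $\{k^* = k\}$ is measurable with respect to $(\t_k, \t_{k+1}, \ldots, \t_0)$ and the $\t_i$ are independent under $\bP_\mu$, the trees $(\t_i)_{i \leq k-1}$ keep their unconditional spine distribution and are independent of $(\t_k, k^*)$. Thus $(\t'_j)_{j \leq -2}$ is i.i.d.\ of spine type, independent of $(k^*, \t'_0, \t'_{-1})$. Inside $\t_{k^*}$, conditionally on $\{k^*=k,\,k_\varnothing(\t_{k^*})=n\}$ with $n \geq 1$, the subtrees $[\t_{k^*}]_1, \ldots, [\t_{k^*}]_n$ are i.i.d.\ $\Pi_\mu$, from which one reads off at once that $\t'_0 = [\t_{k^*}]_1 \sim \Pi_\mu$, that $\t'_0$ is independent of $\t'_{-1}$, and that conditionally on $k_\varnothing(\t'_{-1}) = m$ the subtrees of the root of $\t'_{-1}$ are i.i.d.\ $\Pi_\mu$.

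The last step, and the only genuinely nontrivial point of the proof, is to verify that the marginal law of $k_\varnothing(\t'_{-1})$ is the spine law $\nu(m) = \mu([m+1,\infty))$; this is where the size-biased choice of spine offspring distribution in $\bP_\mu$ enters in an essential way. In the two regimes $k^* = 0$ and $k^* \leq -1$, the conditional law of $k_\varnothing(\t_{k^*})$ is respectively $\mu(n)/(1-\mu(0))$ and $\nu(n)/\mu(0)$ on $\{n \geq 1\}$, so combining with the weights $\bP_\mu(k^* = 0) = 1-\mu(0)$ and $\bP_\mu(k^* \leq -1) = \mu(0)$ produces the telescoping identity
$$\bP_\mu(k_\varnothing(\t'_{-1}) = m) = (1-\mu(0))\cdot\frac{\mu(m+1)}{1-\mu(0)} + \mu(0)\cdot\frac{\mu([m+2,\infty))}{\mu(0)} = \mu(m+1) + \mu([m+2,\infty)) = \mu([m+1,\infty)),$$
which is exactly $\nu(m)$. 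Together with the independence and conditional-subtree properties obtained above, this shows that $(\t'_j)_{j \leq 0}$ satisfies the defining conditions of $\bP_\mu$, and hence $\tau_* \bP_\mu = \bP_\mu$.
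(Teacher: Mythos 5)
Your proof is correct and follows essentially the same route as the paper: both arguments condition on the index $k^*$ (the paper's $k$) at which the first off-spine vertex lives, evaluate the resulting geometric series over $k^* \leq -1$, and close the computation with the telescoping identity $\mu(m+1)+\mu([m+2,\infty))=\mu([m+1,\infty))$. The only difference is presentational: you isolate the law of $k^*$ and the conditional-independence structure as separate steps, whereas the paper packages these into a single joint-probability calculation and then remarks that an "immediate generalization" handles the subtree distributions.
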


\proof Suppose that $\t$ is distributed according to $\bP_\mu$ and set
$\t'=\tau(\t)$ as above. We need to verify that $\t'$ is also distributed according to $\bP_\mu$,
or equivalently that the trees $\t'_0,\t'_{-1},\ldots$ satisfy the same properties 
as $\t_0,\t_{-1},\ldots$ above. The key point is to calculate the
distribution of $(k_\varnothing(\t'_j),j\leq 0)$.
Fix an integer $p\geq 1$, and let $n_0,n_1,\ldots, n_p\in \Z_+$. Also let 
$k$ be the element of $\Z_-$ determined as in the definition of
$\t'=\tau(\t)$ at the end of subsection \ref{infitree}. The event
$$\{k=0\}\cap \big\{k_\varnothing(\t'_0)=n_0,k_\varnothing(\t'_{-1})=n_1,\ldots,k_\varnothing(\t'_{-p})=n_p\big\}$$
holds if and only if we have
$$k_\varnothing(\t_0)=n_1+1,\;k_1(\t_0)= n_0,\;k_\varnothing(\t_{-1})=n_2,\ldots,\; k_\varnothing(\t_{-p+1})=n_p,$$
which occurs with probability
$$\mu(n_1+1)\mu(n_0)\mu([n_2+1,\infty))\ldots \mu([n_p+1,\infty)).$$
Let $\ell\in \Z_-\backslash\{0\}$. Similarly, the event
$$\{k=\ell\}\cap \big\{k_\varnothing(\t'_0)=n_0,k_\varnothing(\t'_{-1})=n_1,\ldots,k_\varnothing(\t'_{-p})=n_p\big\}$$
holds if and only if we have
$$k_\varnothing(\t_0)=0,\ldots,\, k_\varnothing(\t_{\ell +1})=0,\,
k_\varnothing(\t_{\ell})=n_1+1,\,k_1(\t_\ell)=n_0,\,
k_\varnothing(\t_{\ell-1})=n_2,\ldots,\, k_\varnothing(\t_{\ell-p+1})=n_p,$$
which occurs with probability
$$\mu(0)\mu([1,\infty))^{-\ell-1}\mu([n_1+2,\infty))\mu(n_0)\mu([n_2+1,\infty))\ldots
\mu([n_p+1,\infty)).$$
Summarizing, we see that the event 
$$\big\{k_\varnothing(\t'_0)=n_0,k_\varnothing(\t'_{-1})=n_1,\ldots,k_\varnothing(\t'_{-p})=n_p\big\}$$
has probability
\begin{align*}
&\mu(n_0)\mu(n_1+1)\mu([n_2+1,\infty))\ldots \mu([n_p+1,\infty))\\
&\ +\mu(n_0)\mu(0)\Big(\sum_{\ell=-1}^{-\infty}
\mu([1,\infty))^{-\ell-1}\Big)\mu([n_1+2,\infty))\mu([n_2+1,\infty))\ldots
\mu([n_p+1,\infty))\\
&= \mu(n_0) \mu([n_1+1,\infty))\mu([n_2+1,\infty))\ldots
\mu([n_p+1,\infty)),
\end{align*}
as desired. An immediate generalization of the preceding argument shows that, if
$\bt_0$ and $\bt_{j,i}$, $1\leq j\leq p$, $1\leq i\leq n_j$ are given plane trees,
 the event
$$\big\{k_\varnothing(\t'_{-1})=n_1,\ldots,k_\varnothing(\t'_{-p})=n_p\big\} \cap \big\{\t'_0=\bt_0\big\}
\cap \Bigg(\bigcap_{j=1}^p \Big( \bigcap_{i=1}^{n_j} \{[\t'_{-j}]_i=\bt_{j,i}\}\Big)\Bigg)$$
has probability
$$\mu([n_1+1,\infty))\mu([n_2+1,\infty))\ldots
\mu([n_p+1,\infty))\times \Pi_\mu(\bt_0)
\times \prod_{j=1}^p\Big(\prod_{i=1}^{n_j} \Pi_\mu(\bt_{j,i})\Big).$$
This completes the proof. \endproof

\subsection{Random walk indexed by the infinite tree}
\label{sec:random-walk-indexed}

Let $\t\in \T$. The definition of random walk indexed by $\t$
requires some extra care because we need to specify the orientation of edges:  The (oriented) edges of $\t$ are all pairs $(x,y)$ of elements 
of $\t$ such that there exists $j\in \Z_-$ such that 
\begin{itemize}
\item either $x=(j,u)$, $y=(j,v)$, where $u,v\in\t_j$ and $u$ is the parent of $v$;
\item or $x=j-1$, $y=j$.
\end{itemize}
See Fig.\ref{fig-orientation}. We write $\mathcal{E}(\t)$ for the collection of all oriented edges of $\t$. 
The random walk indexed by $\t$ is a collection $(Z_\t(u))_{u\in \t}$ of random variables 
with values in $\Z^d$, such that
$Z_\t(0)=0$ and the random variables $(Z_\t(y)-Z_\t(x))_{(x,y)\in\mathcal{E}(\t)}$
are independent and distributed according to $\theta$. Let $P_{(\t)}$ stand for the
distribution of the collection $(Z_\t(u))_{u\in \t}$.

Let $\T^*$ be the set of all pairs $(\t,(z_u)_{u\in \t})$ where $\t\in \T$
and $z_u\in \Z^d$ for every $u\in\t$. 
We define a probability measure 
$\bP^*_{\mu,\theta}$ on $\T^*$ by declaring that $\bP^*_{\mu,\theta}$
is the law of the random pair $(\ts,(\mathscr{Z}_u)_{u\in \ts})$ where 
$\ts$ is distributed according to $\bP_\mu$ and conditionally on
$\ts=\t$, $(\mathscr{Z}_u)_{u\in \ts}$ is distributed according to $P_{(\t)}$.

We next define a shift transformation $\tau^*$ on $\T^*$. 
For $(\t,(z_u)_{u\in \t})\in \T^*$, we set $\tau^*(\t,(z_u)_{u\in \t})
=(\t',(z'_u)_{u\in\t'})$, where $\t'=\tau(\t)$ and the spatial locations of vertices of 
$\t'$ (which may be viewed as a subset of $\t$) are obtained by
shifting all original locations $z_u$ so that the location of the root 
of $\t'$ is again $0$. More precisely, if $k\in\Z_-$ is defined as 
above in the definition of $\t'=\tau(\t)$, there is a unique 
bijection $\phi_\t$ from $\t'$ onto $\t\backslash\{k+1,k+2,\ldots,0\}$
that maps $0$ to $(k,1)$ and preserves both the lexicographical order
and the genealogical order, and we set
$$z'_u= z_{\phi_\t(u)} - z_{\phi_\t(0)}$$
for every $u\in\t'$.

\begin{proposition}
\label{invariantmeas2}
The probability measure $\bP^*_{\mu,\theta}$ is invariant under $\tau^*$.
\end{proposition}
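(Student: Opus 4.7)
The plan is to deduce the invariance of $\bP^*_{\mu,\theta}$ from the already-proved invariance of $\bP_\mu$ (Proposition~\ref{invariantmeas}) by showing that the spatial component of $\tau^*$ is controlled by an explicit bijection between the oriented edges of $\ts'=\tau(\ts)$ and a subset of the oriented edges of $\ts$. Concretely, given $(\ts,(\mathscr{Z}_u))$ distributed as $\bP^*_{\mu,\theta}$ and setting $(\ts',(\mathscr{Z}'_u))=\tau^*(\ts,(\mathscr{Z}_u))$, Proposition~\ref{invariantmeas} already gives that the marginal law of $\ts'$ is $\bP_\mu$, so only the conditional law of $(\mathscr{Z}'_u)_{u\in\ts'}$ given $\ts'$ remains to be identified as $P_{(\ts')}$.

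For this step I would unwind the definition $\mathscr{Z}'_u=\mathscr{Z}_{\phi_\ts(u)}-\mathscr{Z}_{\phi_\ts(0)}$ to obtain $\mathscr{Z}'_0=0$ and
\[
\mathscr{Z}'_y-\mathscr{Z}'_x \;=\; \mathscr{Z}_{\phi_\ts(y)}-\mathscr{Z}_{\phi_\ts(x)}
\qquad\text{for every }(x,y)\in \mathcal{E}(\ts').
\]
If the induced map $(x,y)\mapsto(\phi_\ts(x),\phi_\ts(y))$ carries $\mathcal{E}(\ts')$ injectively into $\mathcal{E}(\ts)$ with orientations preserved, then the right-hand sides form a sub-family of the i.i.d.\ $\theta$-distributed increments that define $P_{(\ts)}$ conditionally on $\ts$, and are therefore themselves i.i.d.\ $\theta$. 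Combined with $\mathscr{Z}'_0=0$, this identifies the conditional law of $(\mathscr{Z}'_u)_{u\in\ts'}$ given $\ts$ as $P_{(\ts')}$; since the right-hand side depends on $\ts$ only through $\ts'$, the conditional law given $\ts'$ alone is also $P_{(\ts')}$, which is what is required.

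The main obstacle is therefore this combinatorial orientation check. Let $k\in\Z_-$ be the integer associated with $\ts$ in the definition of $\tau$, so that the new root is $(k,1)$ and $\phi_\ts$ removes from $\ts$ the chain of spine vertices $\{k+1,\ldots,0\}$ (none of which carries a non-spine child, by the very choice of $k$). The tree edges within each $\ts'_j$ are matched by the local bijection $\phi_\ts$ to tree edges within $\ts_{j+k+1}$ (for $j\leq -2$) or within $\ts_k$ (for $j\in\{-1,0\}$), preserving the parent-to-child relation, and the surviving old spine edges $(j-1,j)$ of $\ts$ with $j\leq k$ become the spine edges $(j-k-2,j-k-1)$ of $\ts'$ with the same raw orientation. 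The subtle edge is the new top spine edge $(-1,0)$ of $\ts'$, which in $\ts'$ runs from $-1'=k$ to $0'=(k,1)$, whereas in $\ts$ the same pair of vertices forms the tree edge of $\ts_k$ oriented from the parent $k$ to the child $(k,1)$; as ordered pairs in the old coordinates the two orientations coincide, so in fact no edge is ever reversed by the shift. Once this is verified, the preceding two paragraphs yield the invariance of $\bP^*_{\mu,\theta}$ under $\tau^*$.
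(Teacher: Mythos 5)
Your proof is correct and fills in exactly what the paper leaves to the reader: the tree marginal is handled by Proposition~\ref{invariantmeas}, and the spatial part reduces to the observation that the induced edge map $(x,y)\mapsto(\phi_{\ts}(x),\phi_{\ts}(y))$ embeds $\mathcal{E}(\ts')$ into $\mathcal{E}(\ts)$ with orientations preserved, so the increments $\mathscr{Z}'_y-\mathscr{Z}'_x$ form a sub-collection of the i.i.d.\ $\theta$-increments defining $P_{(\ts)}$. Your resolution of the one genuinely delicate point — the new top spine edge $(-1,0)$ of $\ts'$ landing on the tree edge from $(k,\varnothing)$ to $(k,1)$ in $\ts_k$, with matching orientation because spine edges and tree edges are both oriented from parent to child — is exactly what makes the argument go through, and the final conditioning step (from $\sigma(\ts)$ to $\sigma(\ts')$ via the tower property) is handled correctly.
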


This is an easy consequence of Proposition~\ref{invariantmeas}
and the way the spatial positions are constructed. We leave the
details to the reader. 

\begin{figure}[!htbp]
 \begin{center}
 \includegraphics[width=6cm]{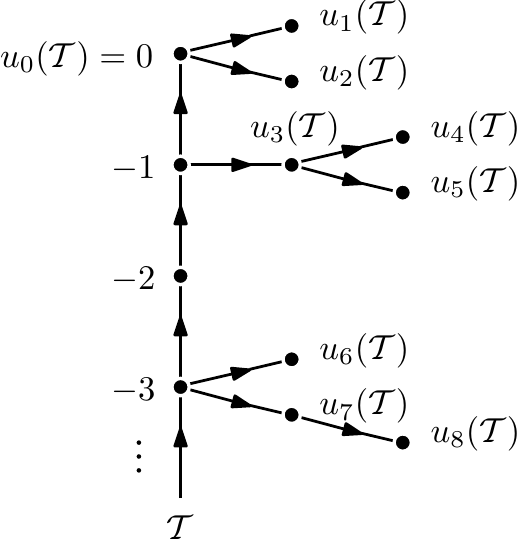}
 \caption{\textsf{The orientation of edges of $\t$, and the sequence
 $u_0(\t),u_1(\t),u_2(\t),\ldots$} \label{fig-orientation}}
 \end{center}
\end{figure}

Let $\t^*=(\t,(z_u)_{u\in \t})\in \T^*$. We define a sequence 
$(u_i(\t))_{i\geq 0}$ of elements of $\t$ as follows. First,
$u_0(\t)=0$ is the root of $\t$. Then 
$u_1(\t),u_2(\t),\ldots$ are all elements of 
$\t\backslash \Z_-$ listed in lexicographical order (see Fig.\ref{fig-orientation}). Finally,
we set, for every integer $n\geq 1$,
$$R_n(\t^*) \colonequals\#\{z_{u_0(\t)},z_{u_1(\t)},\ldots, z_{u_{n-1}(\t)}\}.$$

Recall the notation $a_{\mu,\theta}$ and $h_{\mu,\theta}$ introduced at the end of
subsection~\ref{fitree}. 

\begin{theorem}
\label{subaddi}
We have
$$\frac{R_n}{n} \build{\la}_{n\to\infty}^{} c_{\mu,\theta},\quad\bP^*_{\mu,\theta}\hbox{-a.s.}$$
where the limiting constant $c_{\mu,\theta}\in[0,1]$ may be defined as
$$c_{\mu,\theta} = a_{\mu,\theta}\;
E\Bigg[\prod_{j=1}^\infty \Phi_{\mu,\theta}(-S_j)\Bigg],$$
with
$$\Phi_{\mu,\theta}(x) = \sum_{k=0}^\infty \mu([k+1,\infty))
\Big(\sum_{y\in\Z^d} \theta(y)\,h_{\mu,\theta}(x+y)\Big)^k,$$
for every $x\in \Z^d$. 
\end{theorem}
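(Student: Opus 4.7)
The plan is to write $(R_n)_{n \geq 1}$ as a subadditive process over the shift $\tau^*$ on $(\T^*, \bP^*_{\mu,\theta})$, apply Kingman's subadditive ergodic theorem using the invariance from Proposition~\ref{invariantmeas2}, and identify the limit as a non-return probability that factorizes into the quantities $a_{\mu,\theta}$, $h_{\mu,\theta}$, and $\Phi_{\mu,\theta}$.

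\emph{Subadditivity and Kingman.} The key inequality is
$$R_{n+m}(\t^*) \leq R_n(\t^*) + R_m((\tau^*)^n(\t^*))$$
for $n,m \geq 1$. Iterating the bijection $\phi_\t$ built into the definition of $\tau^*$ identifies $u_i(\tau^n(\t))$ with $u_{n+i}(\t)$ and translates spatial positions so that the new root sits at $0$, so $R_m((\tau^*)^n \t^*)$ counts the distinct (translated) locations in $\{z_{u_n},\ldots,z_{u_{n+m-1}}\}$, and the union with $\{z_{u_0},\ldots,z_{u_{n-1}}\}$ covers $\{z_{u_0},\ldots,z_{u_{n+m-1}}\}$. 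Combined with $0 \leq R_n \leq n$ and Proposition~\ref{invariantmeas2}, Kingman's theorem yields $R_n/n \to L$ almost surely and in $L^1$, for some $\tau^*$-invariant random variable $L$.

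\emph{Identification of the limit.} Set $A_i := \{z_{u_j(\t)} \neq z_{u_i(\t)}\ \text{for all}\ j > i\}$, so that $\mathbf{1}_{A_i} = \mathbf{1}_{A_0}\circ (\tau^*)^i$ by the same bijection, and let $A_i^{(n)}$ be the finite-horizon variant requiring no return in $\{u_{i+1},\ldots,u_{n-1}\}$. Counting each distinct location at its last visit in $[0,n-1]$ gives the identity $R_n = \sum_{i=0}^{n-1}\mathbf{1}_{A_i^{(n)}}$; together with the shift-invariance $\bP^*_{\mu,\theta}(A_i^{(n)})=\bP^*_{\mu,\theta}(A_0^{(n-i)})$ and the decrease $A_0^{(k)} \downarrow A_0$, a Cesàro argument gives
$$\frac{\E[R_n]}{n} = \frac{1}{n}\sum_{k=1}^n \bP^*_{\mu,\theta}(A_0^{(k)}) \longrightarrow \bP^*_{\mu,\theta}(A_0).$$
On the other hand $\sum_{i=0}^{n-1}\mathbf{1}_{A_i} \leq R_n$, and Birkhoff's ergodic theorem together with $\E[L]=\bP^*_{\mu,\theta}(A_0)$ forces $L = \E[\mathbf{1}_{A_0}\mid \mathcal{I}]$ a.s., where $\mathcal{I}$ is the $\tau^*$-invariant $\sigma$-algebra. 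Ergodicity of $\tau^*$, which I would derive from the independence across $k$ of the spine subtrees $\t_{-k}$ and of the spine increments, then pins down $L = \bP^*_{\mu,\theta}(A_0)$ almost surely.

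\emph{Computing the probability and main obstacle.} The event $A_0$ just says that no non-spine vertex of $\t$ has spatial location $0$. Setting $S_k := -z_{-k}$, the process $(S_k)_{k\geq 0}$ is a random walk on $\Z^d$ with jump law $\theta$ started from $0$, and conditionally on $(S_k)_{k\geq 1}$ the root subtree $\t_0$ and the Galton-Watson subtrees attached to the various spine vertices are mutually independent. The contribution of $\t_0$ is exactly $a_{\mu,\theta}$. At spine vertex $-k$, conditionally on $S_k$ there are $N_k$ subtrees with $\bP(N_k=n)=\mu([n+1,\infty))$, each an independent Galton-Watson tree-indexed walk whose root sits at $-S_k + Y$ with $Y \sim \theta$; avoidance of $0$ by one such subtree has conditional probability $\sum_y \theta(y)\,h_{\mu,\theta}(-S_k+y)$, and summing over $N_k$ produces the factor $\Phi_{\mu,\theta}(-S_k)$. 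Taking expectation over $(S_k)$ yields the announced formula for $c_{\mu,\theta}$. The hard part throughout is the bookkeeping around $\tau^*$: verifying that $\phi_\t^n$ identifies $u_i(\tau^n\t)$ with $u_{n+i}(\t)$, which requires a case analysis depending on whether $\t_0$ is empty and on how many spine vertices get absorbed into $\tau(\t)_{-1}$, underpins both the subadditivity step and the identity $\mathbf{1}_{A_i}=\mathbf{1}_{A_0}\circ(\tau^*)^i$. Establishing ergodicity of $\tau^*$, most cleanly via a skew-product representation over the Bernoulli shift acting on the i.i.d.\ spine data, is the other serious technical point.
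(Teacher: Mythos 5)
Your proposal is correct and essentially matches the paper's proof: both establish subadditivity of $R_n$ under $\tau^*$, apply Kingman's theorem, and identify $c_{\mu,\theta}=\lim_n\frac{1}{n}\bE^*_{\mu,\theta}[R_n]=\bP^*_{\mu,\theta}(z_{u_j}\neq 0,\ \forall j\geq1)$ via shift-invariance and a Ces\`aro argument. The only divergences are superficial: the paper invokes a zero-one law for constancy of the Kingman limit and explicitly mentions ergodicity of $\tau^*$ as the alternative (the route you take), and the paper leaves the factorization of $\bP^*_{\mu,\theta}(z_{u_j}\neq 0,\ \forall j\geq 1)$ into the root-subtree contribution $a_{\mu,\theta}$ and the spine-subtree contributions $\Phi_{\mu,\theta}(-S_j)$ as an easy computation, which you carry out correctly.
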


\proof Set $\tau^*_n = (\tau^*)^n$ for every integer $n\geq 1$. We claim
that, for every $n,m\geq 1$,
$$R_{n+m}\leq R_n + R_m\circ \tau^*_n.$$
Indeed, $R_n(\t^*)$
is the number of distinct elements among $z_{u_0(\t)},z_{u_1(\t)},\ldots, z_{u_{n-1}(\t)}$, and similarly $R_{n+m}(\t^*)$
is the number of distinct elements among $z_{u_0(\t)},z_{u_1(\t)},\ldots, z_{u_{n+m-1}(\t)}$. On the other hand, from the construction of the shift transformation,
it is fairly easy to verify that $R_m\circ \tau^*_n(\t^*)$ is the number of
distinct elements among $z_{u_n(\t)},z_{u_{n+1}(\t)},\ldots, z_{u_{n+m-1}(\t)}$.
The bound of the preceding display follows immediately.

Since $0\leq R_n\leq n$, we can then apply Kingman's subadditive ergodic theorem
to the sequence $(R_n)_{n\geq 1}$, and we get that $R_n/n$ converges almost surely. The fact
that the limit is constant is immediate from a simple zero-one law argument
(we could also verify that $\tau^*$ is ergodic). Furthermore, the limiting
constant $c_{\mu,\theta}$ is recovered by 
$$c_{\mu,\theta} = \lim_{n\to\infty} \frac{1}{n} \bE^*_{\mu,\theta}[R_n].$$
However, with the preceding notation,
\begin{align*}
\bE^*_{\mu,\theta}[R_n]&= \bE^*_{\mu,\theta}\Bigg[\sum_{i=0}^{n-1} \ind{\{z_{u_j} \not = z_{u_i},\,\forall
j\in \llbracket i+1,n-1\rrbracket\}}\Bigg]\\
&=\sum_{i=0}^{n-1} \bP^*_{\mu,\theta}(z_{u_j} \not = z_{u_i},\,\forall
j\in \llbracket i+1,n-1\rrbracket)\\
&=\sum_{i=0}^{n-1} \bP^*_{\mu,\theta}(z_{u_j}\not=0,\,\forall j\in \llbracket 1,n-i-1\rrbracket)
\end{align*}
using the shift invariance in the last equality. It now follows that
$$c_{\mu,\theta} = \lim_{n\to\infty} \frac{1}{n} \bE^*_{\mu,\theta}[R_n] =
\bP^*_{\mu,\theta}(z_{u_j}\not=0,\,\forall j\geq 1),$$
and the right-hand side is easily computed in the form given in the theorem, using the definition of 
$\bP^*_{\mu,\theta}$. \endproof

\medskip
Theorem~\ref{subaddi} does not give much information when the limiting 
constant $c_{\mu,\theta}$ is equal to $0$. In the next proposition, we give
sufficient conditions that ensure $c_{\mu,\theta}>0$. We let
$g_\mu$ denote the generating function of $\mu$,
$$g_\mu(r)\colonequals \sum_{k=0}^\infty \mu(k)\,r^k\;,\qquad 0\leq r\leq 1.$$
In the remaining part of this subsection, we assume that the random walk $S$ is transient (it is not
hard to see that $c_{\mu,\theta}=0$ if $S$ is recurrent). We denote the Green function of $S$ by $G_\theta$, that is 
$$G_\theta(x)\colonequals E_0\Big[\sum_{k=0}^\infty \ind{\{S_k=x\}}\Big]\;,\qquad x\in \Z^d.$$

\begin{proposition}
\label{suffcond}
{\rm(i)} The property $c_{\mu,\theta}>0$ holds if
$$\prod_{j=1}^\infty \Big( \frac{1 - g_\mu((1-G_\theta(S_j))_+)}{G_\theta(S_j)}\Big) >0\;,\qquad 
P_0\hbox{-a.s.}$$
\noindent{\rm(ii)} Suppose that the random walk $S$ is centered and has finite
moments of order $(d-1)\vee 2$. Then,
\begin{itemize}
\item if $\mu$ has finite variance, then $c_{\theta,\mu}>0$
if $d\geq 5$. 
\item if $\mu$ is in the domain of attraction of a stable distribution 
with index $\alpha\in(1,2)$, then $c_{\theta,\mu}>0$ if $d> \frac{2\alpha}{\alpha-1}$.
\end{itemize}
\end{proposition}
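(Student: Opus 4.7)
The plan is to derive (i) directly from the formula for $c_{\mu,\theta}$ in Theorem~\ref{subaddi}, and then verify in (ii) that the hypothesis of (i) holds under the stated moment conditions by a Taylor (respectively regular variation) expansion of $g_\mu$ at $1$, combined with local limit theorem and Green function estimates for $S$.

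For part (i), since $a_{\mu,\theta}>0$ it suffices to prove $E\bigl[\prod_{j\geq 1}\Phi_{\mu,\theta}(-S_j)\bigr]>0$. The elementary identity $\sum_{k\geq 0}\mu([k+1,\infty))r^k=(1-g_\mu(r))/(1-r)$ allows one to rewrite
$$\Phi_{\mu,\theta}(x)=\frac{1-g_\mu(s(x))}{1-s(x)},\qquad s(x):=\sum_{y\in\Z^d}\theta(y)\,h_{\mu,\theta}(x+y).$$
The key first-moment bound is
$$1-h_{\mu,\theta}(y)\leq E_{\Pi^*_{\mu,\theta}}\bigl[\#\{u\in\t:z_u=-y\}\bigr]=\sum_{n\geq 0}P_0(S_n=-y)=G_\theta(-y),$$
which uses the criticality of $\mu$ (each generation has expected size $1$); it gives $h_{\mu,\theta}\geq(1-G_\theta(-\,\cdot))_+$. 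Combined with the direct computation $\sum_y\theta(y)G_\theta(-x-y)=G_\theta(-x)-\ind{\{x=0\}}$, this yields $s(-S_j)\geq(1-G_\theta(S_j))_+$ for every $j\geq 1$. Since $g_\mu$ is convex with $g_\mu'(1)=1$, the function $r\mapsto (1-g_\mu(r))/(1-r)$ is nondecreasing on $[0,1)$; substituting the lower bound on $s$ (and handling $G_\theta(S_j)>1$ by the crude inequality $\Phi_{\mu,\theta}\geq 1-\mu(0)$) one gets
$$\Phi_{\mu,\theta}(-S_j)\geq\frac{1-g_\mu((1-G_\theta(S_j))_+)}{G_\theta(S_j)}.$$
The assumed a.s.\ positivity of the product of these lower bounds then forces $\prod_j\Phi_{\mu,\theta}(-S_j)>0$ a.s., whence $c_{\mu,\theta}>0$.

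For the finite variance case in (ii), Taylor expand $1-g_\mu(1-\epsilon)=\epsilon-\tfrac{\sigma_\mu^2}{2}\epsilon^2+o(\epsilon^2)$ using $g_\mu''(1)=\sigma_\mu^2$, so that $\log\bigl((1-g_\mu(1-\epsilon))/\epsilon\bigr)\sim-(\sigma_\mu^2/2)\epsilon$ as $\epsilon\downarrow 0$. Since transience forces $G_\theta(S_j)\to 0$ $P_0$-a.s., the hypothesis of (i) reduces to $\sum_{j\geq 1}G_\theta(S_j)<\infty$ a.s. By Fubini,
$$E_0\Bigl[\sum_{j\geq 1}G_\theta(S_j)\Bigr]=\sum_{j\geq 1,\,k\geq 0}P(S_k'=S_j),$$
where $S'$ is an independent copy of $S$; under the moment hypothesis the local CLT gives $P(S_k'=S_j)\leq C(j+k)^{-d/2}$, so $E_0[G_\theta(S_j)]\lesssim j^{1-d/2}$, which is summable as soon as $d\geq 5$.

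In the stable case $\alpha\in(1,2)$, the Karamata--Tauberian theorem converts $\mu([k,\infty))\sim k^{-\alpha}L(k)$ ($L$ slowly varying) into $1-g_\mu(1-\epsilon)=\epsilon-c\,\epsilon^\alpha L(1/\epsilon)+o(\epsilon^\alpha L(1/\epsilon))$, hence $\log\bigl((1-g_\mu(1-\epsilon))/\epsilon\bigr)\sim-c\,\epsilon^{\alpha-1}L(1/\epsilon)$. The hypothesis of (i) then reduces to $\sum_j G_\theta(S_j)^{\alpha-1}L(1/G_\theta(S_j))<\infty$ a.s. The classical Green function decay $G_\theta(x)\lesssim|x|^{-(d-2)}$ together with the diffusive scale $|S_j|\asymp j^{1/2}$ gives $E[G_\theta(S_j)^{\alpha-1}]\lesssim j^{-(d-2)(\alpha-1)/2}$ (up to slowly varying corrections), which is summable precisely when $d>2\alpha/(\alpha-1)$. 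The main technical hurdle is to extract sharp enough LCLT and Green function regularity from the $(d-1)\vee 2$ moment hypothesis; once those analytic inputs are in hand the rest is routine bookkeeping on the two expansions above.
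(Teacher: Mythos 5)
Your proposal is correct and follows essentially the same route as the paper: lower-bound $h_{\mu,\theta}$ by $1-G_\theta$ via a first-moment argument, rewrite $\Phi_{\mu,\theta}$ through the identity $(1-g_\mu(r))/(1-r)=\sum_{k\geq 0}\mu([k+1,\infty))r^k$, expand $g_\mu$ near $1$, and reduce to a.s.\ summability of $\sum_j G_\theta(S_j)^{(\alpha-1)\wedge 1}$ using the Green-function decay $G_\theta(x)\lesssim |x|^{2-d}$ together with the local limit theorem. The only cosmetic variant is your Fubini computation $E_0\big[\sum_j G_\theta(S_j)\big]=\sum_{j,k}P(S_j=S'_k)$ in the finite-variance case, which bypasses the pointwise Green-function bound there.
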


\proof (i) 
We have already noticed that $a_{\mu,\theta}>0$. We then observe that, for every $r\in [0,1)$,
\begin{equation}
\label{genera}
\sum_{k=0}^\infty \mu([k+1,\infty))\,r^k = \frac{1-g_\mu(r)}{1-r}.
\end{equation}
Next we can get a lower bound on the function $h_{\mu,\theta}(y)$ by
saying that the probability for tree-indexed random walk to visit the point 
$-y$ is bounded above by the expected value of the number of vertices at which
the random walk sits at $-y$. Since $\mu$ is critical, it follows that
$$h_{\mu,\theta}(y)\geq 1 - G_\theta(-y)$$
for every $y\in \Z^d$. Hence, for every $x\in\Z^d$,
$$\sum_{y\in\Z^d} \theta(y)\,h_{\mu,\theta}(x+y)
\geq 1 - \sum_{y\in\Z^d} \theta(y) G_\theta(-x-y).$$
However,
$$\sum_{y\in\Z^d} \theta(y) G_\theta(-x-y) = 
\sum_{y\in\Z^d} \theta(y)\, E_{x+y}\Big[\sum_{k=0}^\infty \ind{\{S_k=0\}}\Big]
=E_x\Big[\sum_{k=1}^\infty \ind{\{S_k=0\}}\Big]\leq G_\theta(-x).$$
Consequently, using~\eqref{genera}, we have, for all $x$ such that $G_\theta(-x)>0$,
$$\Phi_{\mu,\theta}(x)\geq \frac{1-g_\mu((1-G_\theta(-x))_+)}{G_\theta(-x)}.$$
The assertion in (i) follows, noting that $G_\theta(S_j)>0$ for every $j\geq 0$, $P_0$-a.s.

\smallskip
\noi (ii) If $S$ is centered with finite moments of order $(d-1)\vee 2$, then a standard bound for the 
Green function (see e.g.~\cite[Th\'eor\`eme 3.5]{LGRW}) gives the existence of a constant $C_\theta$ such that,
for every $x\in\Z^d$,
\begin{equation}
\label{Greenbound}
G_\theta(x)\leq C_\theta\,|x|^{2-d}
\end{equation}
(recall that we assume that $S$ is transient, so 
that necessarily $d\geq 3$ here).

Suppose first that $\mu$ has a finite variance $\sigma_{\mu}^2$. Then,
$$g_{\mu}(1-s) = 1- s + \frac{\sigma_{\mu}^2}{2}s^2 +o(s^2)$$
as $s\to 0$. Consequently,
$$\frac{1-g_\mu(1-s)}{s}= 1-\frac{\sigma_{\mu}^2}{2}s + o(s)$$
as $s\to 0$. By taking $s=G_\theta(S_j)$, we see that the condition in (i)
will be satisfied if
$$\sum_{j=1}^\infty G_\theta(S_j) < \infty\;,\qquad P_0\hbox{-a.s.}$$
However, using the local limit theorem and the preceding bound for $G_\theta$,
it is an easy matter to verify that the property
$$E_0\Big[\sum_{j=1}^\infty G_\theta(S_j) \Big]< \infty$$
holds if $d\geq 5$. 
This gives the desired result when $\mu$ has a finite variance.

Suppose now that $\mu$ is in the domain of attraction of 
a stable distribution with index $\alpha\in(1,2)$. Then the generating
function of $\mu$ must satisfy the property
$$g(1-s)= 1- s + s^\alpha\,L(s)$$
where $L$ is slowly varying as $s\downarrow 0$ (see e.g.~the discussion
in~\cite[p.60]{DLG}). By the same argument as above, we see that the
condition in (i) will be satisfied if
$$\sum_{j=1}^\infty G_\theta(S_j)^{\alpha -1} < \infty\;,\qquad P_0\hbox{-a.s.}$$
and this holds if 
$$(\alpha-1)(d-2)/2 >1,$$
which completes the proof. \endproof

\smallskip
\noi{\bf Remarks.} 1. The moment assumption in (ii) can be weakened 
a little: According to~\cite{Lawler}, the bound~\eqref{Greenbound} holds 
provided the random walk $S$ (is centered and) has moments of
order $(d-2+\ve)\vee 2$ for some $\ve>0$. However moments of order $d-2$
would not be sufficient for this bound.

\medskip

\noi 2. Suppose that the random walk $S$ satisfies the 
conditions in part (ii) of the proposition. If $\mu$ has finite variance, it is not hard to verify that
$c_{\mu,\theta}=0$ if $d\leq 4$. Let us briefly sketch the argument. It is enough to
consider the case $d=4$. Under 
the probability measure $\Pi^*_{\mu,\theta}$, write 
$N_x$ for the number of vertices whose spatial location is equal to $x$. Then, if $x\not = 0$,
$$\Pi^*_{\mu,\theta}[N_x]= G_\theta(x)\geq C'_\theta|x|^{-2}$$
for some constant $C'_\theta>0$. On the other hand,
standard arguments for Galton-Watson trees show that there exists a constant $K_\mu$ such that
$$\Pi^*_{\mu,\theta}[(N_x)^2] \leq K_\mu \sum_{z\in \Z^4} G_\theta(z)G_\theta(x-z)^2.$$
Using \eqref{Greenbound} and simple calculations, we obtain the existence 
of a constant $K'_{\mu,\theta}$ such that, for every 
$x\in \Z^4$ with $|x|\geq 2$,
$$\Pi^*_{\mu,\theta}[(N_x)^2] \leq K'_{\mu,\theta}\,|x|^{-2}\,\log|x|.$$
Hence, for every 
$x\in \Z^4$ with $|x|\geq 2$,
$$1-h_{\mu,\theta}(x)= \Pi^*_{\mu,\theta}(N_{-x}\geq 1)
\geq \frac{(\Pi^*_{\mu,\theta}[N_{-x})])^2}{\Pi^*_{\mu,\theta}[(N_{-x})^2] }\geq (C'_\theta)^2
(K'_{\mu,\theta})^{-1}\, |x|^{-2}\,(\log|x|)^{-1}.$$
The property $c_{\mu,\theta}=0$ now follows easily.
In the next section, we will see (in a particular case)
that the proper normalization factor for $R_n$ is $(\log n)/n$ when $d=4$.

\medskip
\noi 3.
It is an interesting question whether the condition $d>\frac{2\alpha}{\alpha-1}$
is also sharp when $\mu$ is in the domain of attraction of a stable distribution
of index $\alpha$. We will not discuss this problem here as our main
interest lies in the case when $\mu$ has finite variance. 

\subsection{Conditioned trees}

Our goal is now to obtain an analog of the convergence of Theorem \ref{subaddi}
for random walk indexed by a single Galton-Watson tree
conditioned to be large. 
Recall from subsection~\ref{fitree} 
the notation $\T_f^*$ for the set of all spatial trees. 
If $\t^*=(\t,(z_u)_{u\in\t})$ is a spatial tree with at least $n$ vertices, we keep the same notation 
$R_n(\t^*)$
for the number of distinct points in the sequence $z_{u_0},z_{u_1},\ldots,z_{u_{n-1}}$, where
$u_0,u_1,\ldots u_{\#\t-1}$ are the vertices of $\t$ listed in 
lexicographical order. Also recall
from subsection~\ref{fitree} the definition of the probability measure $\Pi^*_{\mu,\theta}$
on $\T^*_f$. 

\begin{proposition}
\label{rangecondi}
Assume that $\mu$ has finite variance $\sigma_{\mu}^2$, or that 
$\mu$ is in the domain of attraction of a stable distribution with index $\alpha\in(1,2)$.
For every $n\geq 1$, let $\ts^*_{(> n)}$ be a random spatial tree distributed according to the probability measure
$\Pi^*_{\mu,\theta}(\cdot\mid \#\t> n)$. Then, for every $a\in(0,1]$,
$$\frac{1}{n}\,R_{\lfloor an\rfloor}(\ts^*_{(> n)}) \build{\la}_{n\to\infty}^{} c_{\mu,\theta}\,a$$
in probability.
\end{proposition}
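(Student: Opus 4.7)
The idea is to realize the conditioned tree $\ts^*_{(>n)}$ as a specific subtree of the infinite spatial tree $\ts^*$ distributed according to $\bP^*_{\mu,\theta}$, and then transfer the range convergence provided by Theorem~\ref{subaddi}. Under $\bP^*_{\mu,\theta}$, enumerate the subtrees of $\ts$ hanging off the spine in the order they are visited: $T_1\colonequals\t_0$, then the subtrees attached to $-1$ listed in lex order, then those attached to $-2$, and so on. The construction of $\bP_\mu$ immediately shows that $(T_i)_{i\geq 1}$ is an i.i.d.\ sequence of $\Pi_\mu$-trees and that the $\theta$-increments along the edges of the different $T_i$ are mutually independent. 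Set $\nu_n\colonequals\inf\{k\geq 1:\#T_k>n\}$ and $\sigma_n\colonequals\sum_{i<\nu_n}\#T_i$, so that $u_{\sigma_n}$ is the first vertex of $T_{\nu_n}$ in the enumeration $(u_i)_{i\geq 0}$. Then $\nu_n<\infty$ a.s., the subtree $T_{\nu_n}$ has law $\Pi_\mu(\cdot\mid\#\t>n)$, and the spatial tree obtained from $T_{\nu_n}$ by subtracting $z_{u_{\sigma_n}}$ from every spatial label follows $\Pi^*_{\mu,\theta}(\cdot\mid\#\t>n)$. Consequently the variable
$$\rho_n\colonequals\#\{z_{u_i}:\sigma_n\leq i<\sigma_n+\lfloor an\rfloor\}$$
has the same distribution as $R_{\lfloor an\rfloor}(\ts^*_{(>n)})$, and it suffices to prove $\rho_n/n\to c_{\mu,\theta}\,a$ in probability under $\bP^*_{\mu,\theta}$.

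I would then decompose
$$\rho_n=\bigl(R_{\sigma_n+\lfloor an\rfloor}(\ts^*)-R_{\sigma_n}(\ts^*)\bigr)+O_n,$$
where $O_n\geq 0$ counts the points of $\{z_{u_i}:\sigma_n\leq i<\sigma_n+\lfloor an\rfloor\}$ already visited in $\{z_{u_j}:0\leq j<\sigma_n\}$. The classical tail estimate $\Pi_\mu(\#\t>n)\sim c_\mu n^{-1/2}$ in the finite-variance case (with its analogue in the stable case) yields $\sigma_n\to\infty$ a.s.\ and $\E[\sigma_n]=O(n)$, so $\sigma_n/n$ is tight. Combined with the almost-sure convergence $R_m/m\to c_{\mu,\theta}$ from Theorem~\ref{subaddi} applied at the random times $\sigma_n$ and $\sigma_n+\lfloor an\rfloor$, this gives $(R_{\sigma_n+\lfloor an\rfloor}-R_{\sigma_n})/n\to c_{\mu,\theta}\,a$ in probability. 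The inequality $\rho_n\geq R_{\sigma_n+\lfloor an\rfloor}-R_{\sigma_n}$ then supplies the required liminf; and in the degenerate case $c_{\mu,\theta}=0$, the matching upper bound is immediate from $\rho_n\leq R_{\sigma_n+\lfloor an\rfloor}(\ts^*)$, which is $o(n)$ in probability by tightness of $\sigma_n/n$.

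The main obstacle is the non-degenerate case $c_{\mu,\theta}>0$, where one must show $O_n/n\to 0$ in probability. My plan is to use the first-moment estimate
$$\E[O_n]\leq\sum_{\substack{i\in\llbracket 0,\sigma_n-1\rrbracket\\ j\in\llbracket\sigma_n,\sigma_n+\lfloor an\rfloor-1\rrbracket}}\bP^*_{\mu,\theta}(z_{u_i}=z_{u_j}).$$
Conditionally on the spine, the tree-indexed walks on distinct subtrees are independent, so for $i$ and $j$ lying in different subtrees $z_{u_i}-z_{u_j}$ is a sum of $|u_i|+|u_j|+(\text{spine distance})$ i.i.d.\ $\theta$-distributed increments, and the local limit theorem combined with the Green function bound \eqref{Greenbound} provides a power-law upper bound on each such probability. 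Summing against standard tail estimates for vertex depths in critical GW trees should give $\E[O_n]=o(n)$, whence $O_n/n\to 0$ in probability by Markov's inequality; combined with the previous step, this yields $\rho_n/n\to c_{\mu,\theta}\,a$ in probability and closes the argument. Carefully carrying out this moment bookkeeping is the hardest step.
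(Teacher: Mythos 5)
Your construction of $\sigma_n$ (what the paper calls $d_{k_n}$) and the resulting lower bound $\rho_n\geq R_{\sigma_n+\lfloor an\rfloor}(\ts^*)-R_{\sigma_n}(\ts^*)$ is exactly the paper's route, and that part is sound: Theorem~\ref{subaddi}, tightness of $\sigma_n/n$, and monotonicity give the required liminf, which is also what handles the case $c_{\mu,\theta}=0$ entirely.

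The gap is in the upper bound when $c_{\mu,\theta}>0$. You try to show $O_n/n\to 0$ by a first-moment estimate combined with the local limit theorem and the Green function bound~\eqref{Greenbound}. But that bound requires $\theta$ to be centered with finite moments of order $(d-1)\vee 2$ and $d\geq 3$, and after the LLT/Green estimates your bookkeeping would (at best) close only when $d\geq 5$. Proposition~\ref{rangecondi}, however, carries \emph{no} hypotheses on $\theta$ beyond adaptedness — no centering, no moments, no dimension restriction — and $c_{\mu,\theta}>0$ can occur outside the regime where~\eqref{Greenbound} applies (e.g.\ for a walk with drift in low dimension). So your overlap argument is not available in the generality of the statement, and you have not verified that the sum $\E[O_n]$ is actually $o(n)$ even when those extra hypotheses hold (the sum has $\approx\sigma_n\cdot an$ terms with random depths and random limits, so it is not automatic).

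The paper avoids this entirely with a soft argument: since the tree $\ts_{(k_n)}$ (and hence, after recentering at its root, $R_{\lfloor an\rfloor}(\ts^*_{(k_n)})$) is \emph{independent} of $d_{k_n}$, and since the explicit limit law of $d_{k_n}/n$ (namely $D_1$) puts positive mass near $0$, any positive-probability excess of $R_{\lfloor an\rfloor}(\ts^*_{(k_n)})$ over $(c_{\mu,\theta}a+\ve)n$ could be combined with the event $\{d_{k_n}\leq \ve n/(2c_{\mu,\theta})\}$ to contradict the a.s.\ convergence in Theorem~\ref{subaddi} at a deterministic time scale. Note that tightness of $\sigma_n/n$ is not enough for this step; one needs the lower bound $\liminf_n P(\sigma_n\leq \ve' n)>0$, which comes from the explicit scaling limit~\eqref{convdebut}. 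I would replace your $O_n$-estimate with this independence/contradiction device.
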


\proof We first consider the case when $\mu$ has finite variance $\sigma_{\mu}^2$. Let $\ts^*=(\ts,(\mathscr{Z}_u)_{u\in\ts})$ be a $\T^*$-valued random variable distributed according to 
$\bP^*_{\mu,\theta}$ under the probability measure $P$. Recall the notation $\ts_j$, for $j\in \Z_-$, introduced in 
subsection~\ref{infitree}. By construction, the ``subtrees''
$$\ts_0,[\ts_{-1}]_1,[\ts_{-1}]_{2},\ldots,[\ts_{-1}]_{k_\varnothing(\ts_{-1})},[\ts_{-2}]_1,[\ts_{-2}]_{2},\ldots,[\ts_{-2}]_{k_\varnothing(\ts_{-2})},
[\ts_{-3}]_1,\ldots$$
then form an infinite sequence of independent random trees distributed
according to $\Pi_\mu$. To simplify notation we denote this sequence by
$\ts_{(0)},\ts_{(1)},\ts_{(2)},\ldots$. We then introduce the height process 
$(H_k)_{k\geq 0}$ associated with this sequence of trees
(see~\cite[Section 1]{LG1}). This means that, for every
$j\geq 0$, we first enumerate the vertices of $\ts_{(j)}$ in lexicographical order, then we concatenate the finite sequences obtained in this way to get an
infinite sequence $(v_k)_{k\geq 0}$ of elements in~$\mathcal{U}$, and
we finally set $H_k \colonequals |v_k|$ for every $k\geq 0$. Note that the infinite sequence of vertices $(v_k)_{k\geq 0}$ thus obtained is essentially the same as the sequence $(u_{k}(\ts))_{k\geq 0}$ introduced in subsection~\ref{sec:random-walk-indexed}.

Then (see e.g.~\cite[Theorem 1.8]{LG1}), we have the convergence in distribution
\begin{equation}
\label{heightforest}
\Big(\frac{1}{\sqrt{n}}\,H_{\lfloor nt \rfloor}\Big)_{t\geq 0} \build{\longrightarrow}
_{n\to\infty}^{\rm(d)} \Big(\frac{2}{\sigma_{\mu}}\, |\beta_t|\Big)_{t\geq 0}
\end{equation}
where $(\beta_t)_{t\geq 0}$ denotes a standard linear Brownian motion. Next, for every integer $n\geq 1$, set
$$k_n:=\inf\big\{k\geq 0: \#\ts_{(k)} > n\big\}.$$
Clearly, the tree $\ts_{(k_n)}$ is distributed according to
$\Pi_\mu(\cdot\mid \# \t> n)$. Also set
$$d_{k_n}:= \sum_{0\leq j<k_n} \#\ts_{(j)}.$$
Using the convergence~\eqref{heightforest}, it is not hard to prove (see e.g.~the proof of Theorem 5.1 in~\cite{LG2}) that
\begin{equation}
\label{convdebut}
\frac{1}{n}\,d_{k_n} \build{\longrightarrow}
_{n\to\infty}^{\rm(d)} D_1
\end{equation}
where $D_1$ denotes the initial time of the first excursion of $\beta$
away from $0$ with duration greater than $1$. 

By Theorem~\ref{subaddi} and an obvious monotonicity argument, we have
for every integer $K>0$,
$$\lim_{n\to\infty} \sup_{0\leq t\leq K} \Big| \frac{1}{n} R_{\lfloor nt\rfloor}
(\ts^*)- c_{\mu,\theta}\,t\Big| = 0\;,\qquad\hbox{a.s.}$$
and it follows that
$$\lim_{n\to\infty}
\Big| \frac{1}{n} R_{(d_{k_n}+\lfloor an\rfloor)\wedge Kn}(\ts^*)
-\frac{1}{n} R_{d_{k_n}\wedge Kn}(\ts^*)
- c_{\mu,\theta} \Big( (\frac{d_{k_n}}{n} + a)\wedge K - \frac{d_{k_n}}{n}\wedge K\Big)\Big| = 0\;,\qquad\hbox{a.s.}$$
Since $K$ can be chosen arbitrarily large, we deduce from the last convergence
and~\eqref{convdebut} that we have 
$$\lim_{n\to\infty} \frac{1}{n}
\Big( R_{d_{k_n}+\lfloor an\rfloor}(\ts^*) - R_{d_{k_n}}(\ts^*)\Big) = c_{\mu,\theta}\,a$$
in probability. 

Let $\ts^*_{(k_n)}$ stand for the spatial tree obtained from 
$\ts_{(k_n)}$ by keeping the spatial positions induced by $\ts^*$. Then,
by
construction, we have 
$$R_{\lfloor an\rfloor}(\ts^*_{(k_n)})\geq R_{d_{k_n}+\lfloor an\rfloor}(\ts^*) - R_{d_{k_n}}(\ts^*).$$
Therefore, using the preceding convergence in probability, we obtain
that, for every fixed $\ve>0$,
\begin{equation}
\label{condirangetech0}
P\Big( R_{\lfloor an\rfloor}(\ts^*_{(k_n)}) \geq (c_{\mu,\theta}a -\ve)n\Big)
\build{\longrightarrow}_{n\to\infty}^{} 1.
\end{equation}

We claim that we have also
\begin{equation}
\label{condirangetech}
P\Big( R_{\lfloor an\rfloor}(\ts^*_{(k_n)}) \leq (c_{\mu,\theta}a +\ve)n\Big)
\build{\longrightarrow}_{n\to\infty}^{} 1.
\end{equation}
To see this, we argue by contradiction and suppose that for all $n$ belonging 
to a sequence $(n_j)_{j\geq 1}$ converging to infinity, we have
$$P\Big( R_{\lfloor an\rfloor}(\ts^*_{(k_n)}) > (c_{\mu,\theta}a +\ve)n\Big)\geq \delta$$
for some $\delta>0$ independent of $n$. 
We suppose that $c_{\mu,\theta}>0$ (the case when 
$c_{\mu,\theta}=0$ is easier). We observe that, for every
fixed $n$, the tree $\ts_{(k_n)}$ and the quantity $R_{\lfloor an\rfloor}(\ts^*_{(k_n)})$ are
independent of the random variable $d_{k_n}$. Notice that $\ts^*_{(k_n)}$
is {\em not} independent of $d_{k_n}$, because the value of $d_{k_n}$
clearly influences the distribution of the spatial location of the root
of $\ts_{(k_n)}$. However, if we simultaneously translate all spatial locations
of $\ts^*_{(k_n)}$ so that the new location of the root is $0$, the new locations
become independent of $d_{k_n}$, and the translation does not
affect $R_{\lfloor an\rfloor}(\ts^*_{(kn)})$. On the other hand, from the convergence 
in distribution~\eqref{convdebut}, we can find $\delta'>0$ such that,
for every sufficiently large $n$,
$$P\Big(d_{k_n}\leq \frac{\ve}{2 c_{\mu,\theta}}\, n\Big) \geq \delta'.$$
Using the preceding independence property, we conclude that,
for every sufficiently large $n$ in the sequence $(n_j)_{j\geq 1}$,
$$P\Big(R_{\lfloor(\ve n/2c_{\mu,\theta})+ an\rfloor}(\ts^*)\geq (c_{\mu,\theta}a +\ve)n\Big)
\geq P\Big(d_{k_n}\leq \frac{\ve}{2 c_{\mu,\theta}}\, n\Big)\,P\Big( R_{\lfloor an\rfloor}(\ts^*_{(k_n)}) > (c_{\mu,\theta} a+\ve)n\Big)
\geq \delta \delta'.$$
However Theorem~\ref{subaddi} implies that
$$\frac{1}{n} R_{\lfloor(\ve n/2c_{\mu,\theta})+an\rfloor}(\ts^*) \build{\la}_{n\to\infty}^{} c_{\mu,\theta}\,a + \frac{\ve}{2}\;,\qquad\hbox{a.s.}$$
and so we arrive at a contradiction, which completes the proof of~\eqref{condirangetech}. 

By construction, the tree $\ts_{(k_n)}$ is distributed according to $\Pi_{\mu}(\cdot\mid \#\t > n)$,
and if we shift all spatial locations of $\ts^*_{(k_n)}$ so that the new location of the root is $0$,
we get a random spatial tree distributed according to $\Pi^*_{\mu,\theta}(\cdot\mid \#\t> n)$.
The convergence of the proposition thus follows from~\eqref{condirangetech0} and~\eqref{condirangetech}. 

The proof in the case when $\mu$ is in the domain of attraction of a stable distribution with index $\alpha\in(1,2)$ is essentially the same, noting that Theorems 2.3.1 and 2.3.2 in \cite{DLG} give
an analog of the convergence (\ref{heightforest}), where the role of reflected Brownian motion
is played by the so-called height process associated with the stable L\'evy process with
index $\alpha$. We omit the details. 
\endproof

\medskip
We now would like to get a statement analogous to Proposition~\ref{rangecondi} 
for a tree conditioned to have  a fixed number of vertices. This will follow
from Proposition~\ref{rangecondi} by
an absolute continuity argument. Before stating the result, we need to introduce 
some notation. Let $\mathcal{G}$ be the smallest subgroup of $\Z$ that contains the
support of $\mu$. Plainly, the cardinality of the vertex set of a tree distributed according
to $\Pi_\mu$ belongs to $1+\mathcal{G}$. On the other hand, for every sufficiently large 
integer $p\in 1+\mathcal{G}$, we have $\Pi_\mu(\#\t=p)>0$, so that the definition 
of $\Pi_\mu(\cdot \mid \#\t=p)$ makes sense. 

If $\t^*=(\t,(z_u)_{u\in\t})$ is a spatial tree, we write $\mathcal{R}(\t^*)$
for the number of distinct elements in $\{z_u:u\in\t\}$.

\begin{theorem}
\label{mainsuper}
Assume that $\mu$ has finite variance $\sigma_{\mu}^2$, or that 
$\mu$ is in the domain of attraction of a stable distribution with index $\alpha\in(1,2)$.
For every sufficiently large integer $n\in \mathcal{G}$, let $\ts^*_{(n)}$ be a random spatial tree distributed according to the probability measure
$\Pi^*_{\mu,\theta}(\cdot\mid \#\t= n+1)$. Then,
$$\frac{1}{n}\,\mathcal{R}(\ts^*_{(n)})\; \build{\la}_{n\to\infty,\;n\in \mathcal{G}}^{} \;c_{\mu,\theta}$$
in probability.
\end{theorem}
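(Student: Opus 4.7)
The plan is to derive Theorem \ref{mainsuper} from Proposition \ref{rangecondi} via an absolute continuity argument relating $\Pi^*_{\mu,\theta}(\cdot\mid\#\t=n+1)$ to $\Pi^*_{\mu,\theta}(\cdot\mid\#\t>n)$.

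As a first step, I would strengthen Proposition \ref{rangecondi} to the pointwise statement that $\mathcal{R}(\ts)/\#\ts\to c_{\mu,\theta}$ in probability under $\Pi^*_{\mu,\theta}(\cdot\mid\#\t>n)$. This is obtained by applying Proposition \ref{rangecondi} along a finite grid of parameters $a\in\{a_0,\ldots,a_M\}$ covering an interval $[0,K]$ for $K$ large, combined with the obvious monotonicity of $m\mapsto R_m$ and the tightness of $\#\ts/n$ provided by the classical tail estimate $\Pi_\mu(\#\t>n)\sim c\,n^{-1/2}$ (respectively $\sim c\,n^{-1/\alpha}$ in the stable case).

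Fix a small $\delta>0$. Because $\Pi^*_{\mu,\theta}(\cdot\mid n<\#\t\leq(1+\delta)n)$ is absolutely continuous with respect to $\Pi^*_{\mu,\theta}(\cdot\mid\#\t>n)$ with Radon--Nikodym derivative uniformly bounded by a constant $C(\delta)$, the convergence of the previous paragraph persists under this more restricted conditioning. Expanding as a mixture,
\[
\Pi^*_{\mu,\theta}(\,\cdot\,\mid n<\#\t\leq(1+\delta)n)=\!\!\!\!\!\sum_{p\in(n,(1+\delta)n]\cap(1+\mathcal{G})}\!\!\!\!\!w_n(p)\;\Pi^*_{\mu,\theta}(\,\cdot\,\mid\#\t=p),
\]
and using the local asymptotic $\Pi_\mu(\#\t=p)\sim c\,p^{-3/2}$ to verify that every weight $w_n(p)$ is of order $g/(\delta n)$ uniformly over the window, we deduce that the arithmetic mean of $f(p)\colonequals\Pi^*_{\mu,\theta}(|\mathcal{R}/p-c_{\mu,\theta}|>\varepsilon\mid\#\t=p)$ over $p\in(n,(1+\delta)n]\cap(1+\mathcal{G})$ tends to zero as $n\to\infty$, for any fixed $\varepsilon>0$.

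The main obstacle is to convert this averaged convergence into the pointwise statement $f(n+1)\to 0$. To this end I would prove a uniform regularity of $p\mapsto f(p)$ on windows of length $\delta n$: by coupling $\Pi_\mu(\cdot\mid\#\t=p)$ and $\Pi_\mu(\cdot\mid\#\t=p')$ in such a way that the two conditioned trees share $(1-\eta_\delta)\,n$ common vertices with high probability (with $\eta_\delta\to 0$ as $\delta\to 0$), the values $f(p)$ and $f(p')$ would then differ by at most $\eta_\delta+o(1)$ for $p,p'$ in the window. Such a coupling can be built from the Lukasiewicz bridge (cyclic lemma) representation of conditioned Galton--Watson trees, or alternatively from the joint convergence of rescaled conditioned trees to the Brownian (respectively, stable) continuum random tree as in \cite{DLG,LG1}. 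Combining this regularity with the averaged convergence gives $f(n+1)\leq\eta_\delta+o(1)$, and letting $\delta\to 0$ concludes the proof. The case of $\mu$ in the domain of attraction of a stable law of index $\alpha\in(1,2)$ is handled by the same scheme, with the exponents $-1/2$ and $-3/2$ replaced throughout by $-1/\alpha$ and $-1-1/\alpha$.
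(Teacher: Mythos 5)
Your high-level plan (absolute continuity with respect to $\Pi^*_{\mu,\theta}(\cdot\mid \#\t>n)$ and then transfer the conclusion of Proposition~\ref{rangecondi}) is the same one the paper uses, but the way you implement it is genuinely different, and the crucial step is left as a sketch that does not go through as written.

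Your route conditions on a size window $n<\#\t\le(1+\delta)n$. Because the window contains many distinct values of $\#\t$, the bounded-density comparison with $\Pi^*_{\mu,\theta}(\cdot\mid\#\t>n)$ only yields that a \emph{weighted average} of $f(p)\colonequals\Pi^*_{\mu,\theta}(|\mathcal{R}/p-c_{\mu,\theta}|>\ve\mid\#\t=p)$ over the window tends to $0$. To upgrade this to $f(n+1)\to 0$ you invoke a coupling showing $|f(p)-f(p')|\le\eta_\delta+o(1)$ uniformly for $p,p'$ in a window of width $\delta n$. This is the gap: neither of the two mechanisms you name actually produces such a coupling. Convergence of rescaled conditioned trees to the CRT is in the Gromov--Hausdorff(--Prokhorov) sense and gives no control on a coupling in which the two discrete trees literally share most of their vertices (let alone the same spatial increments on those vertices, which you need in order to control the range). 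And coupling the Lukasiewicz paths under $\{T=p\}$ and $\{T=p'\}$ so that a proportion $1-\eta_\delta$ of the steps coincide with high probability would require the two conditioned laws to be close in total variation on the first $\lfloor an\rfloor$ coordinates; from the Kemperman-plus-local-limit computation, the density of one against the other over that window is of order $1+O(1)$, not $1+o(1)$, when $|p-p'|$ is of order $\delta n$, so there is no reason the total variation distance is small. (One would need $|p-p'|=o(\sqrt{n})$ for that, which is far too short a window.) There is also a smaller issue at the start: to show $\mathcal{R}(\ts)/\#\ts\to c_{\mu,\theta}$ under $\Pi^*_{\mu,\theta}(\cdot\mid\#\t>n)$ using a grid of $a\in[0,K]$, you would need Proposition~\ref{rangecondi} for $a>1$, whereas it is only stated (and only needed) for $a\in(0,1]$.

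The paper's proof sidesteps the regularity problem entirely. Instead of conditioning on a size window, it compares the law of the subtree $\rho_{\lfloor an\rfloor}(\t)$ of the first $\lfloor an\rfloor$ vertices directly under $\Pi_\mu(\cdot\mid\#\t=n+1)$ and $\Pi_\mu(\cdot\mid\#\t>n)$. This comparison is expressed through the Lukasiewicz path: the density is the explicit ratio $\big(P_{(0)}(T>n)/P_{(0)}(T=n+1)\big)\cdot\psi_n(X_{\lfloor an\rfloor})/\psi'_n(X_{\lfloor an\rfloor})$, which by Kemperman's formula and the local limit theorem converges uniformly on $\{X_{\lfloor an\rfloor}\ge c\sqrt{n}\}$ to the bounded function $\Gamma_a(X_{\lfloor an\rfloor}/(\sigma_\mu\sqrt{n}))$, while the complementary event is negligible. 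Since $R_{\lfloor an\rfloor}$ is a function of $\rho_{\lfloor an\rfloor}(\t)$ (together with the spatial increments on that subtree, whose conditional law also depends only on $\rho_{\lfloor an\rfloor}(\t)$), Proposition~\ref{rangecondi} transfers at once to the law conditioned on $\#\t=n+1$. The deterministic inequality $0\le\mathcal{R}(\t^*)-R_{\lfloor an\rfloor}(\t^*)\le n+1-\lfloor an\rfloor$ on $\{\#\t=n+1\}$, with $a\uparrow 1$, then finishes the proof with no averaging-to-pointwise step needed. If you want to rescue your window approach, the coupling lemma is precisely what has to be proved, and that looks at least as hard as the subtree-density argument it would replace.
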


\proof We assume in the proof that $\mathcal{G}=\Z$. Only minor modifications are needed to deal
with the general case. 

We first consider the case when $\mu$ has finite variance $\sigma_{\mu}^2$.
The arguments needed to derive Theorem~\ref{mainsuper} from Proposition~\ref{rangecondi}
are then similar to the proof of Theorem 6.1 in~\cite{LG2}. The basic idea is as follows. For every $a\in(0,1)$, the law under $\Pi_\mu(\cdot\mid \#\t=n+1)$ of the 
subtree obtained by keeping only the first $\lfloor an\rfloor$ vertices of $\t$ is absolutely continuous with respect to the law
under $\Pi_\mu(\cdot\mid \#\t>n)$ of the same subtree, with a density that is bounded independently of $n$. 
A similar property holds for spatial trees, and so we can use the convergence of Proposition \ref{rangecondi}, for a tree distributed
according to $\Pi^*_{\mu,\theta}(\cdot\mid \#\t>n)$, to get a similar convergence for a tree distributed according
to $\Pi^*_{\mu,\theta}(\cdot\mid \#\t=n+1)$. Let us give some
details for the sake of completeness. 

As previously, we write $u_0(\t),u_1(\t),\ldots,u_{\#\t-1}(\t)$ for the vertices of a plane tree $\t$
listed in lexicographical order. The Lukasiewisz path of $\t$ is then the finite sequence
$(X_\ell(\t),0\leq \ell\leq \#\t)$, which is defined inductively by
$$X_0(\t)=0\;,\quad X_{\ell+1}(\t)-X_\ell(\t)= k_{u_{\ell}(\t)}(\t)-1\;,\quad\hbox{for every }0\leq \ell <\#\t$$
where we recall that, for every $u\in\t$, $k_u(\t)$ is the number of children of $u$ in $\t$. 
The tree $\t$ is determined by its Lukasiewisz path.
A key result (see e.g.~\cite[Section 1]{LG1}) states that under $\Pi_\mu(\mathrm{d}\t)$, the Lukasiewisz path is 
distributed as a random walk on $\Z$ with jump distribution $\nu$
determined by $\nu(j)=\mu(j+1)$ for every $j\geq -1$, which starts from $0$ and is stopped at the first time when it hits $-1$
(in particular, the law of $\#\t$ under $\Pi_\mu(\mathrm{d}\t)$ coincides with the law
of the latter hitting time).
For notational convenience, we let $(Y_k)_{k\geq 0}$ be a random walk on $\Z$ with jump 
distribution $\nu$, which starts from $j$ under the probability measure $P_{(j)}$, and we set
$$T \colonequals \inf\{k\geq 0: Y_k=-1\}.$$

Next take $n$  large enough so that $\Pi_\mu(\#\t=n+1)>0$.  Fix $a\in(0,1)$, and 
consider a tree $\t$ such that $\#\t>n$. Then, the collection of
vertices $u_0(\t),\ldots, u_{\lfloor an\rfloor}(\t)$ forms a subtree of~$\t$ (because in the 
lexicographical order the parent of a vertex comes before this vertex), and
we denote this subtree by $\rho_{\lfloor an\rfloor}(\t)$. It is elementary
to verify that $\rho_{\lfloor an\rfloor}(\t)$ is determined by the sequence
$(X_\ell(\t),0\leq \ell\leq \lfloor an\rfloor)$. Let $f$ be a bounded function
on $\Z^{\lfloor an\rfloor+1}$. Using the Markov property at time $\lfloor an\rfloor$
for the random walk with jump distribution $\nu$, one
verifies that
\begin{align}
\label{supertech1}
&\Pi_\mu \Big[ f((X_k)_{0\leq k\leq \lfloor an\rfloor})\,\Big| \,\#\t=n+1\Big]\notag\\
&\qquad=\frac{P_{(0)}(T>n)}{P_{(0)}(T=n+1)}\, \Pi_\mu \Bigg[ f((X_k)_{0\leq k\leq \lfloor an\rfloor})
\frac{\psi_n(X_{\lfloor an\rfloor})}{\psi'_n(X_{\lfloor an\rfloor})}\,
\,\Bigg| \,\#\t>n\Bigg]
\end{align}
where, for every integer $j\geq 0$,
$$\psi_n(j)= P_{(j)}(T=n+1-\lfloor an\rfloor)\;,\ \psi'_n(j) = P_{(j)}(T>n-\lfloor an\rfloor).$$
See~\cite[pp.742-743]{LG2} for details of the derivation of~\eqref{supertech1}. 
We now let $n$ tend to infinity. Using Kemperman's formula 
(see e.g.~Pitman~\cite[p.122]{pitman}) and a standard local limit
theorem, one easily checks that, for every $c>0$,
\begin{equation}
\label{supertech2}
\lim_{n\to \infty} \Bigg(\sup_{j\geq c\sqrt{n}} \Bigg| \frac{P_{(0)}(T>n)}{P_{(0)}(T=n+1)}\,
\frac{\psi_n(j)}{\psi'_n(j)} - \Gamma_a\Big(\frac{j}{\sigma_{\mu}\sqrt{n}}\Big)\Bigg|\Bigg) =0,
\end{equation}
where for every $x\geq 0$, 
$$\Gamma_a(x)= \frac{2(2\pi(1-a)^3)^{-1/2}\,\exp(-x^2/2(1-a))}{\int_{1-a}^\infty \mathrm{d}s\,
(2\pi s^3)^{-1/2}\,\exp(-x^2/2s)}.$$
See again~\cite[pp.742-743]{LG2} for details. Note that the function $\Gamma_a$
is bounded over $\R_+$. Furthermore, from the local limit theorem again, it is easy to
verify that
\begin{equation}
\label{supertech3}
\lim_{c\downarrow 0}\limsup_{n\to\infty} \Pi_\mu(X_{\lfloor an\rfloor}\leq c\sqrt{n}\,|\, \#\t=n+1)=0\,,\;
 \lim_{c\downarrow 0}\limsup_{n\to\infty} \Pi_\mu(X_{\lfloor an\rfloor}\leq c\sqrt{n}\,|\, \#\t>n)=0.
\end{equation}
(We take this opportunity to point out that the analogous statement in~\cite[p.743]{LG2} is written
incorrectly.) By combining~\eqref{supertech1}, \eqref{supertech2} and~\eqref{supertech3},
we obtain that, for any uniformly bounded sequence of functions $(f_n)_{n\geq 1}$
on $\Z^{\lfloor an\rfloor+1}$, we have
\begin{equation}
\label{supertech4}
\lim_{n\to\infty} \Big| \Pi_\mu \Big[ f_n((X_k)_{0\leq k\leq \lfloor an\rfloor})\,\Big| \,\#\t=n+1\Big]
- \Pi_\mu \Big[ f_n((X_k)_{0\leq k\leq \lfloor an\rfloor})\,
\Gamma_a(\frac{X_{\lfloor an\rfloor}}{\sigma_{\mu}\sqrt{n}})\,\Big| \,\#\t>n\Big]\Big|=0.
\end{equation}
This convergence applies in particular to the case when, for every $n$, $f_n((X_k)_{0\leq k\leq \lfloor an\rfloor})$
is a function of the tree $\rho_{\lfloor an\rfloor}(\t)$. If we now replace $\Pi_\mu$
by $\Pi^*_{\mu,\theta}$, the same convergence still holds, and we can even allow the function 
of the tree $\rho_{\lfloor an\rfloor}(\t)$ to depend also on the spatial locations of the
vertices of $\rho_{\lfloor an\rfloor}(\t)$ (the point is that the conditional distribution of these
spatial locations given the tree $\t$ only depends on the subtree $\rho_{\lfloor an\rfloor}(\t)$). 
Consequently, if $\ve >0$ is fixed, we have
$$\lim_{n\to\infty}\Big| \Pi^*_{\mu,\theta} \Big[\ind{\{ |R_{\lfloor an\rfloor}- c_{\mu,\theta}an|>\ve n\}}\,\Big| \,\#\t=n+1\Big]
- \Pi^*_{\mu,\theta} \Big[\ind{\{ |R_{\lfloor an\rfloor}- c_{\mu,\theta}an|>\ve n\}}
\,\Gamma_a(\frac{X_{\lfloor an\rfloor}}{\sigma_{\mu}\sqrt{n}})\,\Big| \,\#\t>n\Big]\Big| =0.$$
Recalling that the function $\Gamma_a$ is bounded, and using Proposition~\ref{rangecondi},
we now obtain that 
$$\lim_{n\to\infty}  \Pi^*_{\mu,\theta} \Big( |R_{\lfloor an\rfloor}- c_{\mu,\theta}an|>\ve n\,\Big| \,\#\t=n+1\Big)
=0.$$
Since $0\leq \mathcal{R}(\t^{*}) - R_{\lfloor an\rfloor}(\t^{*})\leq n+1 - \lfloor an\rfloor$, $\Pi^*_{\mu,\theta}(\cdot\mid \#\t=n+1)$-a.s., and $a$ can be chosen arbitrarily close
to $1$, the convergence in Theorem~\ref{mainsuper} follows. 

Very similar arguments can be used in the case when $\mu$ is in the domain 
of attraction of a stable distribution with index $\alpha\in(1,2)$. We now refer 
to the proof of Lemma 3.3 in \cite{Kor} for the exact analogs of the properties 
(\ref{supertech1}) -- (\ref{supertech4}) used in the finite variance case. 
We leave the details to the reader. 
\endproof

\smallskip
The case $d\geq 5$ of Theorem \ref{treeSRW} follows from Theorem \ref{mainsuper} and 
Proposition \ref{suffcond}, noting that when $\mu$ is the critical geometric distribution,
a tree distributed according to $\Pi_\mu(\cdot\mid\#\t=n)$ is uniformly distributed over
the set of all plane trees with $n$ vertices (see e.g. \cite[Section 1.5]{LG1}).

\section{The critical dimension}

In this section, we discuss the dimension $d=4$, which is critical in the case of random walks
that are centered and have sufficiently high moments. We restrict our attention 
to the case when the offspring distribution is geometric with parameter $1/2$.
Our main tool is the discrete snake, which is a path-valued Markov chain 
that can be used to generate the spatial positions of the tree-indexed random walk. 

\subsection{Limit theorems}

We now let $\theta$ be a symmetric probability distribution on
$\Z^4$. We assume that $\theta$ has small exponential moments and 
is not supported on a strict subgroup
of $\Z^4$.  As previously, we write  $S=(S_k)_{k\geq 0}$ for the 
random walk in $\Z^4$ with jump distribution $\theta$, and we now assume that $S$ starts from $0$ under the
probability measure $P$.  We will also assume for simplicity that 
the covariance matrix $M_\theta$ of~$\theta$ is 
of the form $\sigma^2\,{\rm Id}$, where ${\rm Id}$ is the four-dimensional identity
matrix and $\sigma >0$. This isotropy condition can be removed,
and the reader will easily check that all subsequent arguments remain
valid for a non-isotropic random walk: the role of $\sigma^2$ is then played
by $({\rm det}\,M_\theta)^{1/4}$.

We first introduce the free discrete snake associated with $\theta$. This is
a Markov chain with values in the space $\mathcal{W}$ that we now define. The space
$\mathcal{W}$ is the set of all semi-infinite discrete paths $w=(w(k))_{k\in\rrbracket -\infty,
\zeta\rrbracket}$ with values in $\Z^4$. Here $\zeta= \zeta(w)\in\Z$
is called the lifetime of $w$. We often write $\wh w = w(\zeta(w))$ for the endpoint of $w$.

If $w\in \mathcal{W}$, we let $\overline w$ stand for the new path 
obtained by ``erasing'' the endpoint of $w$, namely
$\zeta(\overline w)= \zeta(w)-1$ and 
$\overline w(k)=w(k)$ for every $k\in\rrbracket-\infty,\zeta(w)-1\rrbracket$. If $x\in \Z^4$, 
we let $w\oplus x$ be the path obtained from $w$ by ``adding'' the
point $x$ to $w$, namely $\zeta(w\oplus x)=\zeta(w)+1$,
$(w\oplus x)(k)=w(k)$ for every $k\in \rrbracket -\infty,
\zeta(w)\rrbracket$ and $(w\oplus x)(\zeta(w)+1)=x$. 

The free discrete snake is the Markov chain
$(W_n)_{n\geq 0}$ in $\mathcal{W}$ whose transition
kernel is defined by
$$Q(w,\mathrm{d}w')=\frac{1}{2}\,\delta_{\overline w}(\mathrm{d}w')
+ \frac{1}{2}\sum_{x\in\Z^4} \theta(x)\,\delta_{w\oplus(
\wh w+x)}(\mathrm{d}w').$$
We will write $\zeta_n=\zeta(W_n)$ to simplify notation. It will also be convenient 
to write $W_n^*$ for the path $W_n$ shifted so that its endpoint is $0$: 
$W_n^*(k) = W_n(k)-\wh W_n$ for every $k\in\rrbracket -\infty,\zeta_n\rrbracket$.

\begin{figure}[!htbp]
 \begin{center}
 \includegraphics[width=11cm]{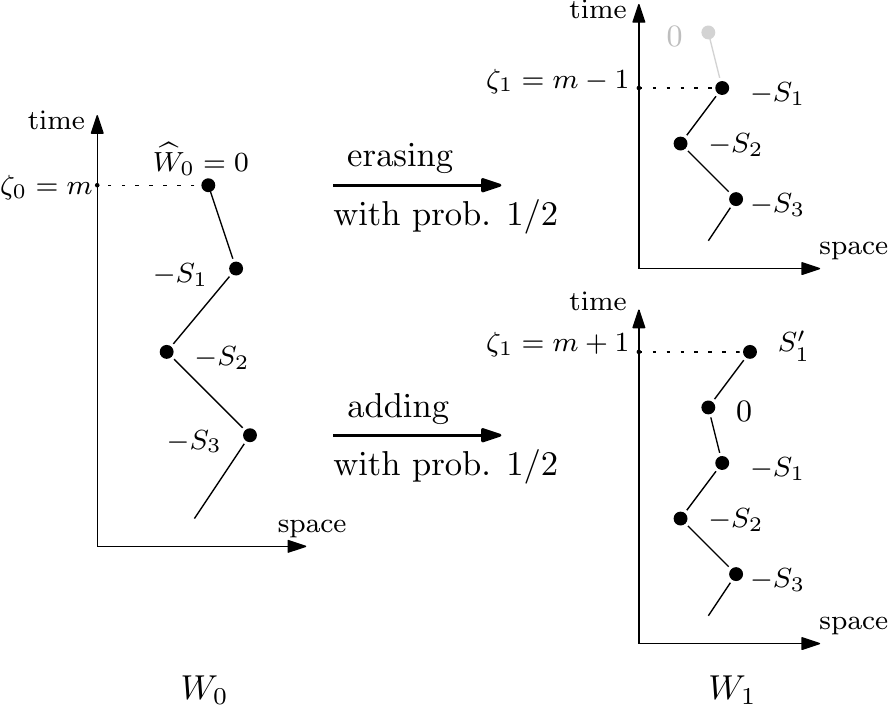}
 \caption{\textsf{The discrete snake under $\mathbb{P}_m$. In this illustration, $S'_{1}$ is an independent copy of~$S_{1}$.} \label{fig-snake}}
 \end{center}
\end{figure}

If $w\in \mathcal{W}$, $\P_{(w)}$ will denote the probability measure 
under which the discrete snake $W$ starts from $w$.
For every integer $m\in\Z$, we also write $\P_m$ for a probability measure under which $\zeta_0=m$ a.s.~and  the initial value
$W_0$ of the discrete snake is distributed as $(-S_{m-k})_{k\in\rrbracket-\infty,m\rrbracket}$ (since 
$S$ is symmetric we could omit the minus sign here).  We write $\P$ for $\P_0$. As usual, the expectation under $\P_m$, resp.~under $\P$, is denoted by $\E_m$, resp.~by~$\E$. Note that 
$(\zeta_n)_{n\geq 0}$ is a simple random walk on $\Z$ started from $m$
under $\P_m$. We will use the notation
$$\tau_p \colonequals \inf\{n\geq 0: \zeta_n=\zeta_0-p\}$$
for every integer $p\geq 0$.

Furthermore, from the form of the transition kernel of
the discrete snake, it is easy to verify that for every $n\geq0$, for 
every integer $\ell\in\Z$ such that $\P_m(\zeta_n=\ell)>0$, the conditional
distribution of $W_n^*$
 under $\P_m(\,\cdot\mid \zeta_n=\ell)$, coincides with the distribution of $W_0$
under $\P_\ell$.

\begin{proposition}
\label{firstesti}
We have
$$\lim_{n\to\infty} (\log n)\,\P(\wh W_k\not =0,\forall k\in\llbracket 1,n\rrbracket)=4\pi^2 \sigma^4.$$
Furthermore,
$$\lim_{p\to\infty} (\log p)\,\P(\wh W_k\not =0,\forall k\in\llbracket 1,\tau_p\rrbracket)=2\pi^2 \sigma^4.$$
\end{proposition}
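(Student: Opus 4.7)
The plan is to set up a renewal-type equation for the probability $q_n\colonequals\P(\wh W_k\neq 0,\,\forall k\in\llbracket 1,n\rrbracket)$, exploiting the Markov property of the path-valued snake $(W_k)$ rather than the head alone (which is not Markov). My starting point is a structural observation: conditionally on the lifetime trajectory $(\zeta_k)_{0\leq k\leq n}$, the random variable $\wh W_n$ is distributed as $S_{|\zeta_n|}$, a sum of $|\zeta_n|$ iid $\theta$-steps. This follows from the snake construction combined with the symmetry of $\theta$ (the portion of $W_0$ below level $0$ has iid $\theta$-increments, and freshly added steps are iid $\theta$). Summing over the Gaussian distribution of $\zeta_n$ and applying the local CLT for $S$ in $\Z^4$ yields $\P(\wh W_n=0)\sim c/\sqrt{n}$, so $\sum_{k=1}^n\P(\wh W_k=0)\sim c'\sqrt{n}$. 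In particular a naive one-point renewal on the head cannot give a $1/\log n$ asymptotic: most of the contributions come from the trivial event $\{\zeta_k=0\}$ (for which $\wh W_k=W_0(0)=0$), and the genuine logarithm must be extracted more carefully.

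The main step is to decompose the snake's history along the excursions of $\zeta$ away from its running minimum. Each such excursion corresponds to a subtree branching off the spine, and on the spatial side the snake restricted to the excursion is, conditionally, a fresh random walk indexed by that excursion, independent of the others by the strong Markov property of $(W_k)$. Thus the event $\{\wh W_k\neq 0\}$ factorizes approximately over excursions, and one obtains a renewal-type relation linking $q_n$ to itself through the distribution of the first excursion time of $\zeta$. The dimension $d=4$ is critical precisely because, within a single excursion of height $H$, a transient random walk in $\Z^4$ started away from $0$ avoids $0$ with a probability bounded below; the shrinking of $q_n$ comes only from an accumulation of many excursions.

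The sharp constant requires precise local estimates and is extracted via the multidimensional Koml\'os--Major--Tusn\'ady strong invariance principle, coupling the random walks appearing in each excursion to $4$-dimensional Brownian motions $\sigma B$ with polylogarithmic error. This reduces the discrete problem to a Brownian-snake computation in $\R^4$; the constant $4\pi^2\sigma^4=(2\pi\sigma^2)^{d/2}\big|_{d=4}$ then arises as the reciprocal of the Gaussian density at $0$, exactly as the $\pi$ in the planar random walk asymptotic is the reciprocal of the $2$-dimensional Gaussian density at $0$.

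The second limit follows from the first by a time change. Since $\tau_p$ is the first hitting time of $-p$ by the simple random walk $\zeta$, we have $\tau_p/p^2$ converging in distribution to a nondegenerate positive random variable, so $\log\tau_p=2\log p+O_{\P}(1)$. A uniform version of the first limit applied at $n=\tau_p$ gives
$$\P(\wh W_k\neq 0,\,\forall k\in\llbracket 1,\tau_p\rrbracket)\sim\frac{4\pi^2\sigma^4}{\log\tau_p}\sim\frac{4\pi^2\sigma^4}{2\log p}=\frac{2\pi^2\sigma^4}{\log p},$$
accounting for the factor $\tfrac12$ between the two constants. The main obstacle is the renewal step itself: because the head is not Markov, the ``regeneration'' at each return of $\zeta$ to $0$ is only approximate (the conditional distribution of $W_k$ given $\zeta_k=0$ is not exactly the initial distribution under $\P$), and one must use the Markov property of the full snake $W_k$ together with the KMT coupling to control these errors sharply enough to preserve the $1/\log n$ leading order. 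This is the heart of the technical work developed in subsection~\ref{mainest}.
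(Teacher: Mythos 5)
The proposal contains a genuine and consequential error at its very starting point. You claim that conditionally on the lifetime trajectory $(\zeta_k)_{0\leq k\leq n}$, the head $\wh W_n$ is distributed as $S_{|\zeta_n|}$. This is false: the part of the path $W_n$ shared with $W_0$ extends down to level $\underline{\zeta}_n := \min_{0\leq j\leq n}\zeta_j$, and the fresh steps go from level $\underline{\zeta}_n$ back up to $\zeta_n$. Using the symmetry of $\theta$, the correct statement (and the one the paper uses) is that conditionally on $X_n := \zeta_n-2\underline{\zeta}_n = m$, $\wh W_n$ is distributed as $S_m$. In particular your assertion that $\{\zeta_k=0\}$ forces $\wh W_k = W_0(0) = 0$ is wrong as soon as $\underline{\zeta}_k<0$. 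By Pitman's theorem the distribution of $X_k$ carries a factor $(m+1)^2$, which turns the local limit estimate into $\P(\wh W_k=0)\sim \frac{1}{4\pi^2\sigma^4 k}$ (the paper's Lemma~\ref{visitzero}), not $c/\sqrt{k}$. Hence $\sum_{k\leq n}\P(\wh W_k=0)\sim \frac{1}{4\pi^2\sigma^4}\log n$, not $c'\sqrt{n}$, and your stated reason for rejecting a ``naive one-point renewal'' evaporates.

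This matters because the paper's proof \emph{is} essentially that renewal argument, made rigorous via a time-reversal symmetry of the snake (Lemma~\ref{symmetry}), not the excursion factorization you propose. Starting from the exact identity $1=\sum_{k=0}^n\P(\wh W_k=0;\,\wh W_\ell\neq 0,\,\forall \ell\in\llbracket k+1,n\rrbracket)$, one applies the Markov property at time $k$ and then time reversal to replace $\P_{(W_k)}$ by $\P_{(W_0)}$, landing on $\E[(\sum_k \ind{\{\wh W_k=0\}})\,\P_{(W_0)}(\cdots)]$. The inner sum of returns is rewritten via the Green function formula $\E_{(W_0)}[\sum_{k<\tau_m}\ind{\{\wh W_k=0\}}]=2\sum_{j<m}G(-W_0(-j))$, and the $1/\log n$ asymptotic together with the constant $4\pi^2\sigma^4$ come from the concentration of $\sum_{j\leq m}G(S_j)$ around $\frac{1}{4\pi^2\sigma^4}\log m$ (Lemmas~\ref{estimoGreen} and~\ref{keyestimate}). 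KMT is used only in that last concentration step, via a comparison with the additive functional $\int ds/\rho_s^2$ of the four-dimensional Bessel process. Your excursion decomposition is also not obviously sound as stated: the spatial position of the snake at the start of a given excursion of $\zeta$ away from its running minimum is not the origin, so the events $\{\wh W_k\neq 0\}$ restricted to distinct excursions do not ``factorize approximately'' in any simple way without exactly the kind of Green-function bookkeeping the paper carries out. The only part of your proposal that genuinely matches the paper is the deduction of the second limit from the first via $\log\tau_p\approx 2\log p$.
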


The proof of Proposition~\ref{firstesti} is given in subsection~\ref{mainest}
below. 
Our first theorem is concerned with the range of the free snake.

\begin{theorem}
\label{freesnake4D}
Set $R_n \colonequals \#\big\{\wh W_0,\wh W_1,\ldots,\wh W_n\big\}$ for every integer $n\geq 0$.
We have
$$\frac{\log n}{n} \,R_n \build{\la}_{n\to\infty}^{L^2(\P)} 4\pi^2\sigma^4.$$
\end{theorem}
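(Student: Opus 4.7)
My plan is to prove Theorem~\ref{freesnake4D} by an $L^2$ computation: first establish the first moment asymptotics
\[
\frac{\log n}{n}\, \E[R_n] \la 4\pi^2 \sigma^4,
\]
and then show that $\mathrm{Var}(R_n) = o(n^2/(\log n)^2)$.

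For the first moment I would use the last-visit representation
\[
R_n = \sum_{k=0}^n \ind{\{\wh W_j \neq \wh W_k,\ \forall j \in \llbracket k+1,\, n\rrbracket\}},
\]
which gives $\E[R_n] = \sum_{k=0}^n \P(\wh W_j \neq \wh W_k,\ \forall j \in \llbracket k+1,\, n\rrbracket)$. Applying the Markov property of the discrete snake at time $k$, translating spatially so that the head at time $k$ becomes the origin, and using the identification (recalled just before Proposition~\ref{firstesti}) of the conditional law of $W_k^*$ given $\zeta_k = \ell$ with the law of $W_0$ under $\P_\ell$, each summand rewrites as $\sum_\ell \P(\zeta_k = \ell)\, \P_\ell(\wh W_j \neq 0,\ \forall j \in \llbracket 1,\, n-k\rrbracket)$. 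To bring Proposition~\ref{firstesti} into play for $\ell \neq 0$ I would couple the snake under $\P_\ell$ with the snake under $\P_0$: after the lifetime $\zeta$ first hits $\zeta_0 \wedge 0$, which happens in $O(\ell^2)$ steps by the CLT for the simple random walk $\zeta$, the future of the snake (on a negligibly small cost event) has the law of a fresh snake under $\P_0$ started at the current spatial position. Proposition~\ref{firstesti} then applies to the remaining $n-k - O(\ell^2)$ steps, giving $\P_\ell(\wh W_j \neq 0,\ \forall j \in \llbracket 1,\, m\rrbracket) \sim 4\pi^2 \sigma^4 / \log m$ whenever $m \gg \ell^2$. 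Since under $\P$ the typical $\zeta_k$ is of order $\sqrt{k}$, the bulk of the sum contributes $\sim 4\pi^2 \sigma^4\, n/\log n$, while boundary terms (from $k$ near $n$ and from atypically large $|\zeta_k|$) are negligible.

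For the variance I would expand
\[
\E[R_n^2] = \E[R_n] + 2 \sum_{0 \leq k < k' \leq n} \P(B_k \cap B_{k'}),
\]
with $B_k = \{\wh W_j \neq \wh W_k,\ \forall j \in \llbracket k+1,\, n\rrbracket\}$, and show that this sum matches $(\E[R_n])^2$ up to lower order terms. Applying the Markov property first at $k'$ and then at $k$ reduces the problem to an analogue for the head of the discrete snake of the classical two-point estimate in the critical dimension: the probability that the snake, started from a general initial path, simultaneously avoids two specified spatial points over $m$ steps factorizes, up to a multiplicative $1 + o(1)$, as the product of one-point probabilities of order $1/\log m$. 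The main obstacle is precisely this two-point decoupling with sharp enough precision; I expect it to follow by elaborating the arguments of subsection~\ref{mainest}, which for the one-point probability already rests on a delicate analysis via the Koml\'os--Major--Tusn\'ady strong invariance principle and Green-function-type bounds. Contributions from pairs with $k' - k$ much smaller than $n$ are controlled by the first moment bound and are negligible compared to $(\E[R_n])^2 \asymp n^2/(\log n)^2$, completing the $L^2$ convergence.
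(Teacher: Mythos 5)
Your overall strategy (first-moment asymptotics plus $\E[R_n^2] \leq (1+o(1))(\E[R_n])^2$) is the same as the paper's, and your first-moment computation reaches the right answer. However, the coupling you invoke there is unnecessary: the probability $\P_\ell(\wh W_j\neq 0,\ \forall j\in\llbracket 1,m\rrbracket)$ is \emph{exactly} equal to $\P_0(\wh W_j\neq 0,\ \forall j\in\llbracket 1,m\rrbracket)$ for every $\ell$, by shift-invariance of the lifetime process and of the snake dynamics. The paper uses this observation directly (it is also recorded explicitly in the proofs of Lemmas~\ref{key4D1} and~\ref{estimoment1}), with no coupling and no constraint $m\gg \ell^2$.

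The genuine gap is in the second moment. You correctly identify that after applying the Markov property, one faces a probability of jointly avoiding $0$ over $\llbracket 1,n-i\rrbracket$ and avoiding the (random) point $\wh W_{j-i}$ over $\llbracket j-i+1,n-i\rrbracket$, and you correctly identify the decoupling of these two events as the crux. But you only gesture at a ``two-point'' analogue of Proposition~\ref{firstesti} ``by elaborating the arguments of subsection~\ref{mainest}'' via KMT and Green-function bounds, and no such estimate is proved or even precisely formulated in your proposal. That route is both harder than needed and not what the proof actually requires. The paper's mechanism is softer and quite different: fix $\alpha\in(0,1/4)$, let $\sigma_n=\inf\{k:\zeta_k\leq -n^{1/2-\alpha}\}$, restrict the double sum to pairs with $j-i>n^{1-\alpha}$ (the others are negligible), replace the first avoidance constraint $\llbracket 1,n-i\rrbracket$ by the \emph{shorter} window $\llbracket 1,\sigma_n\rrbracket$ (an upper bound), and then apply the strong Markov property at $\sigma_n$: conditionally on the history up to $\sigma_n$, the probability of the second avoidance event, computed by an additional application of the Markov property at time $j-i$ together with the same shift-invariance observation, is the \emph{constant} $\P(\wh W_\ell\neq 0,\ \forall\ell\in\llbracket 1,n-j\rrbracket)$. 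Hence the factorization is exact, not up to $1+o(1)$; the only loss is the factor $1/(1-3\alpha)$ from shortening the first window to length $\geq n^{1-3\alpha}$, and this disappears as $\alpha\downarrow 0$. Without this stopping-time decoupling idea, your proposal cannot close the argument; the KMT machinery is needed only inside the one-point estimate of Proposition~\ref{firstesti}, which both proofs take as given.
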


\proof We first observe that
$$
 \E[ R_n]
= \E\Big[ \sum_{i=0}^n \ind{\{\wh W_j \not = \wh W_i\;,
\forall j\in \llbracket i+1,n\rrbracket\}}\Big]
=\sum_{i=0}^n \P\big(\wh W_j \not = \wh W_i\;,
\forall j\in \llbracket i+1,n\rrbracket\big).
$$
Then, by applying the Markov property of the free snake, we have
\begin{align*}
\E[ R_n]
&= \sum_{i=0}^n
\E\Big[ \P_{(W_i)}(\wh W_j \not = \wh W_0\;,
\forall j\in \llbracket 1,n-i\rrbracket)\Big]\\
&= \sum_{i=0}^n
\E\Big[ \P_{(W_i^*)}(\wh W_j \not = \wh W_0\;,
\forall j\in \llbracket 1,n-i\rrbracket)\Big]\\
&= \sum_{i=0}^n \P(\wh W_j\not = 0, \forall j\in\llbracket 1,n-i\rrbracket),
\end{align*}
where the second equality is easy by translation invariance, and the last one is a simple consequence of the remark 
before the statement of Proposition~\ref{firstesti}. Using now the
result of Proposition~\ref{firstesti}, we get
\begin{equation}
\label{moment1freesnake}
\lim_{n\to\infty} \frac{\log n}{n}\, \E[ R_n]= 4\pi^2 \sigma^4.
\end{equation}

Let us turn to the second moment. We have similarly
\begin{align*}
\E\big[(R_n) ^2\big]
&=\E\Big[ \sum_{i=0}^n\sum_{j=0}^n \ind
{\{\wh W_k \not = \wh W_i\;,
\forall k\in \llbracket i+1,n\rrbracket; \wh W_\ell \not = \wh W_j\;,
\forall \ell\in \llbracket j+1,n\rrbracket\}}\Big]\\
&= 2\sum_{0\leq i<j\leq n} \P\Big(\wh W_k \not = \wh W_i\;,
\forall k\in \llbracket i+1,n\rrbracket; \wh W_\ell \not = \wh W_j\;,
\forall \ell\in \llbracket j+1,n\rrbracket\Big) +\E[R_n]\\
&= 2\sum_{0\leq i<j\leq n} 
\E\Big[ \P_{(W_i)}(\wh W_k \not = \wh W_0\;,
\forall k\in \llbracket 1,n-i\rrbracket; \wh W_\ell \not = \wh W_{j-i}\;,
\forall \ell\in \llbracket j-i+1,n-i\rrbracket)\Big]\\
&\qquad + \E[R_n]\\
&=2\sum_{0\leq i<j\leq n} 
\P\Big(\wh W_k \not = 0\;,
\forall k\in \llbracket 1,n-i\rrbracket; \wh W_\ell \not = \wh W_{j-i}\;,
\forall \ell\in \llbracket j-i+1,n-i\rrbracket\Big) + \E[R_n],
\end{align*}
where the last equality again follows from the observation preceding
Proposition~\ref{firstesti}. Let us fix $\alpha\in (0,1/4)$ and define
$$\sigma_n:=\inf\big\{k\geq 0: \zeta_k \leq -n^{\frac{1}{2}-\alpha}\big\}.$$
By standard estimates, we have
$$\lim_{n\to \infty} (\log n)^2\, \P\big(\sigma_n\leq n^{1-3\alpha}
\hbox{ or } \sigma_n \geq n^{1-\alpha}\big) =0.$$
Thus, using also~\eqref{moment1freesnake},
\begin{align*}
&\limsup_{n\to\infty} \Big(\frac{\log n}{n}\Big)^2\,
\E[(R_n)^2]
=\limsup_{n\to\infty} 2\Big(\frac{\log n}{n}\Big)^2
\sum_{0\leq i<j\leq n} 
\P\Big(\wh W_k \not = 0\;,
\forall k\in \llbracket 1,n-i\rrbracket; \\
&\hspace{6cm} \wh W_\ell \not = \wh W_{j-i}\;,
\forall \ell\in \llbracket j-i+1,n-i\rrbracket; 
n^{1-3\alpha}\leq \sigma_n\leq n^{1-\alpha}\Big).
\end{align*}
Clearly, in order to study the limsup in the right-hand side, we may restrict the
sum to indices $i$ and $j$ such that  $j-i> n^{1-\alpha}$. However,
 if $0\leq i<j\leq n$ are fixed such that $j-i> n^{1-\alpha}$,
\begin{align*}
&\P\Big(\wh W_k \not = 0\;,
\forall k\in \llbracket 1,n-i\rrbracket; \wh W_\ell \not = \wh W_{j-i}\;,
\forall \ell\in \llbracket j-i+1,n-i\rrbracket; 
n^{1-3\alpha}\leq \sigma_n\leq n^{1-\alpha}\Big)\\
&\leq \P\Big(\wh W_k \not = 0\;,
\forall k\in \llbracket 1,\sigma_n\rrbracket; \wh W_\ell \not = \wh W_{j-i}\;,
\forall \ell\in \llbracket j-i+1,n-i\rrbracket; 
n^{1-3\alpha}\leq \sigma_n\leq n^{1-\alpha}\Big)\\
&= \P\Big(\wh W_k \not = 0\;,
\forall k\in \llbracket 1,\sigma_n\rrbracket;n^{1-3\alpha}\leq \sigma_n\leq n^{1-\alpha}\Big)\,\P\Big(\wh W_\ell \not =0\;,\forall \ell \in\llbracket 1,n-j\rrbracket\Big).
\end{align*}
To derive the last equality, we use the strong Markov property at time $\sigma_n$
and then, after conditioning on $\sigma_n=m$, the Markov property
at time $j-i-m$ for
the free snake shifted at time $\sigma_n$ and the observation preceding
Proposition~\ref{firstesti}. Now obviously,
$$\P\Big(\wh W_k \not = 0\;,
\forall k\in \llbracket 1,\sigma_n\rrbracket;n^{1-3\alpha}\leq \sigma_n\leq n^{1-\alpha}\Big)\leq \P\Big(\wh W_k \not = 0\;,
\forall k\in \llbracket 1,\lfloor n^{1-3\alpha}\rfloor\rrbracket\Big),$$
and it follows that
\begin{align*}
&\limsup_{n\to\infty} \Big(\frac{\log n}{n}\Big)^2\,
\E\big[(R_n)^2\big]\\
&\leq \limsup_{n\to\infty} 2\Big(\frac{\log n}{n}\Big)^2
\build{\sum_{0\leq i<j\leq n}}_{j-i>n^{1-\alpha}}^{}  
 \P\Big(\wh W_k \not = 0\;,
\forall k\in \llbracket 1,\lfloor n^{1-3\alpha}\rfloor\rrbracket\Big)\,
\P\Big(\wh W_\ell \not =0\;,\forall \ell \in\llbracket 1,n-j\rrbracket\Big)\\
&= \frac{1}{1-3\alpha} (4\pi^2 \sigma^4)^2
\end{align*}
by Proposition~\ref{firstesti}. Since $\alpha$ can be chosen arbitrarily small,
we get
\begin{equation}
\label{moment2freesnake}
\limsup_{n\to\infty} \Big(\frac{\log n}{n}\Big)^2\,
\E\big[(R_n)^2\big]\leq (4\pi^2 \sigma^4)^2.
\end{equation}
Theorem~\ref{freesnake4D} is an immediate consequence of~\eqref{moment1freesnake} and~\eqref{moment2freesnake}. \endproof

\medskip
We now aim to prove a result similar to Theorem~\ref{freesnake4D} for 
the ``excursion'' of the discrete snake. We set
$$T \colonequals \inf\{k\geq 0: \zeta_k=-1\}.$$
For every integer $n\geq 1$, we let $W^{(n)}=(W^{(n)}_k)_{0\leq k\leq 2n}$ 
be a process defined under $\P$, whose distribution 
coincides with the conditional distribution of $(W_k)_{0\leq k\leq 2n}$ knowing that 
 $T=2n+1$. To simplify
notation, we write $\zeta^{(n)}_k=\zeta(W^{(n)}_k)$. Note that
$(\zeta^{(n)}_k)_{0\leq k\leq 2n}$ is the contour function, also called depth-first walk, of a
Galton-Watson tree with geometric offspring distribution of parameter $1/2$,
conditioned to have $n+1$ vertices (see e.g.\,\cite[Chapter 6]{pitman}).
We have already noticed that the latter tree is uniformly distributed over plane trees with 
$n+1$ vertices.
 From the form 
of the transition mechanism of the discrete snake, it then follows that $\{\wh W^{(n)}_k\,,\,0\leq k\leq 2n\}$
is distributed as the set of all spatial locations of a random walk with jump distribution 
$\theta$ indexed by a uniform  random plane tree with $n+1$ vertices. 

We will need two simple estimates that we gather in the next lemma.

\begin{lemma}
\label{estimate-excu}
{\rm(i)} Let $r\geq 1$ be an integer. There exists a constant $C(r)$
such that, for every integers $n\geq 1$ and $m\geq 0$,
$$\E\left[(\#\{k\in\llbracket 0,2n\rrbracket : \zeta^{(n)}_k =m\})^r\right]
\leq C(r)\,(m+1)^r.$$
{\rm(ii)} Let  $\ve>0$. Then, for every $r>0$,
$$\P\Big(\sup_{0\leq k\leq 2n} \zeta^{(n)}_k > n^{\frac{1}{2}+\ve}\Big)
=O(n^{-r})$$
as $n\to \infty$. 
\end{lemma}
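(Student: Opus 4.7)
Both estimates reduce to standard properties of uniformly distributed plane trees with $n+1$ vertices, since $(\zeta^{(n)}_k)_{0\le k\le 2n}$ is the contour function of such a tree $\t^{(n)}$ (equivalently, a uniform Dyck path of length $2n$).

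For (i), the key observation is the combinatorial identity
$$\#\{k\in\llbracket 0,2n\rrbracket:\zeta^{(n)}_k=m\}=N_m(\t^{(n)})+N_{m+1}(\t^{(n)}),$$
where $N_\ell(\t)$ denotes the number of vertices of $\t$ at height $\ell$: each vertex $v$ at height $m\ge 1$ is visited exactly $k_v(\t)+1$ times by the contour, and summing over $|v|=m$ gives $N_m+N_{m+1}$; the case $m=0$ is similar. It therefore suffices to bound $\E[N_m(\t^{(n)})^r]$ by $C(r)(m+1)^r$, uniformly in $m\ge 0$ and $n\ge 1$. When $m\le \sqrt n$, one conditions on the subtree of vertices at heights $\le m$ and applies Kemperman's formula together with the local limit theorem $\P(\#\t=n+1)\sim c\,n^{-3/2}$---the same mechanism used in the proof of Theorem~\ref{mainsuper}---to obtain the size-biasing bound $\E[N_m^r\mid \#\t=n+1]\le C\,\E_{\Pi_\mu}[N_m^{r+1}]$. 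The classical factorial moment identities for the critical Galton-Watson process with finite variance then yield $\E_{\Pi_\mu}[N_m^{r+1}]\le C'(r)(m+1)^r$. When $m>\sqrt n$, one instead uses a uniform moment bound $\E[\sup_\ell N_\ell(\t^{(n)})^r]\le C''(r)\,n^{r/2}$ on the width of the conditioned tree (which follows from the scaling limit of the generation-size profile to the local time of a normalized Brownian excursion, together with uniform integrability); since $n^{r/2}\le(m+1)^r$ in this regime, the required estimate still holds.

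For (ii), the maximum of $\zeta^{(n)}$ is the height of $\t^{(n)}$, equivalently the maximum of a uniform Dyck path of length $2n$. A classical reflection-principle argument yields the sub-Gaussian tail
$$\P\!\left(\max_{0\le k\le 2n}\zeta^{(n)}_k\ge h\right)\le C_1\exp(-c_1 h^2/n)$$
uniformly in $h\ge 0$ and $n\ge 1$, with $C_1,c_1>0$ independent of $n$. Plugging in $h=\lceil n^{1/2+\ve}\rceil$ gives a bound of order $\exp(-c_1 n^{2\ve})$, which is $o(n^{-r})$ for every $r>0$.

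The main technical obstacle is the uniform-in-$m$ size-biasing identity in part (i): it must hold for all $m$ up to order $\sqrt n$, which requires careful use of the local limit theorem for sums of iid subtree sizes with heavy tails $\P(\#\t=k)\sim c\,k^{-3/2}$. Part (ii) is comparatively routine once one invokes the well-known sub-Gaussian bound on the height of a uniform plane tree.
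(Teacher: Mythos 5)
The paper settles this lemma by citation: part (i) is attributed to Theorem~1.13 of Janson~\cite{Janson} (with the parenthetical remark that a direct argument is possible), and part (ii) is declared standard. You take on the direct argument, and your high-level plan is the natural one. The combinatorial reduction is correct --- the contour visits each vertex $v$ exactly $k_v(\t)+1$ times, so $\#\{k:\zeta^{(n)}_k=m\}=N_m+N_{m+1}$ --- and part (ii) is fine: the maximum of $\zeta^{(n)}$ is the height of a uniform plane tree, whose sub-Gaussian tail bound is classical.

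The gap is in the size-biasing step of part (i). Your inequality $\E[N_m^r\mid\#\t=n+1]\le C\,\E_{\Pi_\mu}[N_m^{r+1}]$ amounts to the uniform density bound
$$\frac{\P(\#\t=n+1\mid\t_{\le m})}{\P(\#\t=n+1)}\le C\,N_m,$$
where $\t_{\le m}$ denotes the subtree of vertices at heights $\le m$. Writing $A_m$ for the number of vertices strictly below level $m$, Kemperman's formula applied to the $N_m$ i.i.d.\ subtrees hanging from level $m$ gives $\P(\#\t=n+1\mid\t_{\le m})\le C\,N_m\,(n+1-A_m)^{-3/2}$; dividing by $\P(\#\t=n+1)\asymp n^{-3/2}$ leaves the extra factor $(n/(n+1-A_m))^{3/2}$, which is bounded only when $A_m$ stays away from $n$. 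For $m$ of order $\sqrt{n}$ the event $\{A_m>n/2\}$ has non-negligible conditional probability, and on that event the density is unbounded, so the asserted inequality fails pointwise; a separate argument showing that the contribution from this event is still $O((m+1)^r)$ --- which requires control of the joint law of $(N_m,A_m)$ --- is missing from your sketch. Likewise, the width moment bound $\E[\sup_\ell N_\ell^r]\le C''\,n^{r/2}$ that you invoke for $m>\sqrt n$ does not follow from weak convergence of the generation profile to Brownian excursion local time; one needs uniform integrability of the $r$-th powers of the rescaled width, a non-trivial tail estimate of comparable strength to the statement itself. Both missing pieces are exactly what Janson's Theorem~1.13 supplies.
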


Part (i) of the lemma can be deduced from Theorem~1.13 in 
Janson \cite{Janson} using the connection between $\zeta^{(n)}$
and the critical geometric Galton-Watson tree (it is also possible to
give a direct argument), while Part (ii) is standard. Notice that Part (i) of Lemma~\ref{estimate-excu} implies 
$$\E\left[(\#\{k\in\llbracket 0,2n\rrbracket : \zeta^{(n)}_k \leq n^{\frac{1}{2}-\frac{\alpha}{2}}\})^2\right]=o\Big(\big(\frac{n}{\log n}\big)^2\Big)$$
as $n\to \infty$. 

We will make a repeated use of Kemperman's formula
for simple random walk (see \cite[p.122]{pitman} for a more general
version): For every
choice of the integers $m,k$ such that $k>m\geq 0$,
\begin{equation}
\label{Kemp}
\P_m(T=k)= \frac{m+1}{k}\,\P_m(\zeta_k=-1)=\frac{m+1}{k}\,\P_0(\zeta_k=m+1).
\end{equation}
Together with this formula, we will use the local limit theorem for simple random walk on $\Z$, which we state in the form
found in Lawler and Limic~\cite[Proposition 2.5.3, Corollary 2.5.4]{LL}: As $k\to\infty$,
\begin{equation}
\label{LLT}
\P_0(\zeta_k=m)= \sqrt{\frac{2}{\pi k}}
\exp\big(-\frac{m^2}{2k}\big)\,\exp\Big(O\big(\frac{1}{k}+ \frac{m^4}{k^3}\big)\Big)
\end{equation}
uniformly over integers $m$ such that $|m|\leq k$ and $k+m$ is even.

We fix $\alpha\in(0,1/4)$ and to 
simplify notation, we write $p_n=\lfloor n^{\frac{1}{2}-\alpha}\rfloor$
for every integer $n\geq 1$. Recall the notation
$\tau_p=\inf\{n\geq 0: \zeta_n = \zeta_0-p\}$.

\begin{lemma}
\label{key4D1}
If $\eta>0$ is sufficiently small, we have
$$\lim_{n\to\infty}
\Bigg(\; \sup_{\begin{subarray}{c}
n^{1-\eta} \leq k \leq 2n\\
n^{\frac{1}{2}-\frac{\alpha}{2}}\leq m \leq n^{\frac{1}{2}+\eta}
\end{subarray}}\Big|(\log n)\,\P_m\Big( \wh W_j \not = \wh W_0\;, \forall j\in\llbracket 1, \tau_{p_n}\rrbracket
\,\Big|\, T=k\Big) - \frac{4\pi^2\sigma^4}{1-2\alpha}\Big|\;\Bigg)
=0,$$
where in the supremum we consider only integers $m$ and $k$ such that
$k+m$ is odd.
\end{lemma}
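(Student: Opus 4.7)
The strategy is to reduce the conditional statement to the unconditional one (Proposition~\ref{firstesti}) by a density comparison. Let $A := \{\wh W_j \neq \wh W_0,\,\forall j \in \llbracket 1, \tau_{p_n}\rrbracket\}$. Under $\P_m$ one has $\wh W_0 = 0$, and the head process $(\wh W_j)_{j\geq 0}$ has the same law under $\P_m$ as under $\P$ (the snake dynamics depend on the lifetime only through its increments, and the initial path distribution is invariant under translation of the lifetime). Proposition~\ref{firstesti} applied with $p = p_n = \lfloor n^{\frac{1}{2}-\alpha}\rfloor$ therefore yields
\[
\P_m(A) \;\sim\; \frac{2\pi^2\sigma^4}{\log p_n} \;=\; \frac{4\pi^2\sigma^4}{(1-2\alpha)\log n},
\]
and all that remains is to show that conditioning on $T=k$ does not affect this leading asymptotic, uniformly in $(m,k)$.

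For this, I would apply the strong Markov property of the snake at $\tau_{p_n}$. Since $m>p_n$ eventually, $\tau_{p_n}<T$ deterministically and $A \in \mathcal{F}_{\tau_{p_n}}$. Using that the residual lifetime process is a simple random walk starting from $\zeta_{\tau_{p_n}}=m-p_n$,
\[
\P_m(A \cap \{T=k\}) \;=\; \E_m\!\left[\ind{A}\, \P_{m-p_n}(T=k-\tau_{p_n})\right],
\]
so $\P_m(A \mid T=k) = \E_m[\ind{A}\,\rho_n]$ with $\rho_n := \P_{m-p_n}(T=k-\tau_{p_n})/\P_m(T=k)$. Set $G_n := \{\tau_{p_n} \leq n^{1-\alpha}\}$. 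The plan is to prove $\rho_n = 1 + o(1)$ uniformly on $G_n$ and that $\E_m[\ind{G_n^c}\rho_n] = \P_m(G_n^c \mid T = k) = o(1/\log n)$.

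For the good event, Kemperman's formula~\eqref{Kemp} and the local limit theorem~\eqref{LLT} applied to both numerator and denominator of $\rho_n$ exhibit $\rho_n$ as a product of a polynomial prefactor $\frac{m-p_n+1}{m+1}\frac{k}{k-\tau_{p_n}} = 1+o(1)$, the LLT correction $\exp(O(m^4/k^3))$ which is $1+o(1)$ provided $\eta < 1/7$, and $\exp(\Delta_n)$ with
\[
\Delta_n \;=\; \frac{(m+1)^2}{2k}-\frac{(m-p_n+1)^2}{2(k-\tau_{p_n})}
\;=\; \frac{m p_n}{k} - \frac{m^2 \tau_{p_n}}{2k^2} + o(1).
\]
On $G_n$ the two leading terms are $O(n^{2\eta-\alpha})$ and $O(n^{4\eta-\alpha})$, hence $o(1)$ as soon as $\eta<\alpha/4$; choosing $\eta<\min(\alpha/4,1/7)$ yields $\rho_n = 1+o(1)$ uniformly on $G_n$. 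For the bad event, a parallel analysis at the deterministic time $t_n := \lfloor n^{1-\alpha}\rfloor$ (splitting $\{\tau_{p_n}>t_n\}$ according to the value of $\zeta_{t_n}$, where the typical range $\zeta_{t_n} = m + O(\sqrt{t_n})$ gives a bounded LLT ratio and the atypical range is killed by a Gaussian tail) shows that the conditioning on $T=k$ multiplies $\P_m(\tau_{p_n} > t_n)$ only by a bounded factor, so $\P_m(G_n^c \mid T = k) = O(\P_m(\tau_{p_n}>t_n)) = O(p_n/\sqrt{t_n}) = O(n^{-\alpha/2}) = o(1/\log n)$.

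Putting the pieces together gives $\P_m(A \mid T=k) = (1+o(1))\P_m(A) + o(1/\log n)$, and substituting the value of $\P_m(A)$ from the first paragraph yields the claim. The main technical obstacle is uniformity of the LLT and Kemperman estimates across the full parameter range: the Gaussian exponent $m^2/(2k)$ is not bounded—it can reach $n^{3\eta}$ when $m$ is close to $n^{1/2+\eta}$—so one must work with differences of exponents rather than each exponent individually. The constraints $\eta<1/7$ and $\eta<\alpha/4$ emerge precisely so that the LLT correction tends to $1$ and $\Delta_n$ vanishes on $G_n$, respectively.
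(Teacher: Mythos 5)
Your overall strategy matches the paper's: apply the strong Markov property at $\tau_{p_n}$, express the conditioning on $\{T=k\}$ through a Radon--Nikodym ratio $\rho_n$, compare $\rho_n$ to $1$ via Kemperman's formula and the local limit theorem, and split into a good/bad event for $\tau_{p_n}$. The reduction to Proposition~\ref{firstesti}, the analysis of $\Delta_n$ on the good event, and the identification of the constraints $\eta<\alpha/4$ (from $m^2\tau_{p_n}/k^2$) and $\eta<1/7$ (from the LLT error $m^4/k^3$) are all sound.

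However, there is a genuine gap in your treatment of the bad event. With your cutoff $t_n=\lfloor n^{1-\alpha}\rfloor$, the ``typical range'' $\zeta_{t_n}=m+O(\sqrt{t_n})=m+O(n^{\frac12-\frac\alpha2})$ leaves an atypical range whose unconditioned probability is only $\exp(-O(1))$, since the excess is of the same order as $\sqrt{t_n}$. This must be divided by $\P_m(T=k)$, which by Kemperman and the LLT is of order $(m/k)k^{-1/2}\exp(-m^2/2k)$, and $m^2/2k$ can reach $n^{3\eta}$ in your parameter range. An $O(1)$ Gaussian-tail factor simply cannot beat a denominator of order $\exp(-n^{3\eta})$. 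To make this work you would need to push the atypical threshold out to $B\gtrsim n^{3\eta/2}\sqrt{t_n}$; but then the ``typical'' window widens, and the LLT ratio computation $(m'-m)(m'+m)/2k\lesssim n^{7\eta/2-\alpha/2}$ imposes a strictly smaller $\eta$ than the $\alpha/4$ you state (and this additional constraint is exactly the one you omit from your closing summary). The paper sidesteps this by taking the smaller cutoff $q_n=\lfloor n^{1-\frac{3\alpha}{2}}\rfloor$, so that $\sqrt{q_n}=n^{\frac12-\frac{3\alpha}{4}}\ll n^{\frac12-\frac\alpha2}$ and the exceedance probability $\P_0(\zeta_{q_n}>n^{\frac12-\frac\alpha2})$ is $\exp(-n^{\gamma})$ with $\gamma>0$, which does dominate $\exp(-n^{3\eta})$ once $3\eta<\gamma$; this is precisely where the paper's requirement $3\eta<\gamma$, $\eta<\alpha/8$ comes from. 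Since the lemma only asserts ``$\eta$ sufficiently small,'' your argument could be repaired, but as written the bad-event estimate is not established.
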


\proof 
We first explain how to choose $\eta$.
We set $q_n=\lfloor n^{1-\frac{3\alpha}{2}}\rfloor$ and note that
$$\P_0\big(\zeta_{q_n}>n^{\frac{1}{2}-\frac{\alpha}{2}}\big)\leq
\P_0\big(\zeta_{q_n} > q_n^{\frac{1}{2}+c(\alpha)}\big)$$
where $c(\alpha)= \frac{\alpha}{4-6\alpha}>0$. By a standard bound,
the latter probability is bounded (for $n$ large) by
$\exp(-n^{\gamma})$, where the constant $\gamma=\gamma(\alpha)>0$
only depends on $\alpha$. We fix $\eta>0$ such that $3\eta <\gamma$ and
$\eta\in(0,\alpha/8)$.

To simplify notation, we then set
$$\Delta_n \colonequals \big\{(m,k): n^{\frac{1}{2}-\frac{\alpha}{2}}\leq m \leq n^{\frac{1}{2}+\eta},\;
n^{1-\eta} \leq k \leq 2n\hbox{ and } k+m \hbox{ is odd}\big\}.$$
Since $p_n \sim n^{-\alpha/4}\,\sqrt{q_n}$, standard estimates give, for every 
$\delta\in(0,\frac{\alpha}{4})$,
\begin{equation}
\label{exitRW}
\lim_{n\to\infty} n^{\delta}\, \P_0(\tau_{p_n} \geq q_n) = 0.
\end{equation}
We claim that we have also, for every $\delta\in(0,\frac{\alpha}{4})$,
\begin{equation}
\label{keyeq00}
\lim_{n\to\infty} n^\delta\,\sup_{(m,k)\in \Delta_n}\;\P_m(\tau_{p_n} \geq q_n \mid T=k)= 0.
\end{equation}
Let us postpone the proof of~\eqref{keyeq00} and derive 
the estimate of the lemma. 

\begin{figure}[!htbp]
 \begin{center}
 \includegraphics[width=14.5cm]{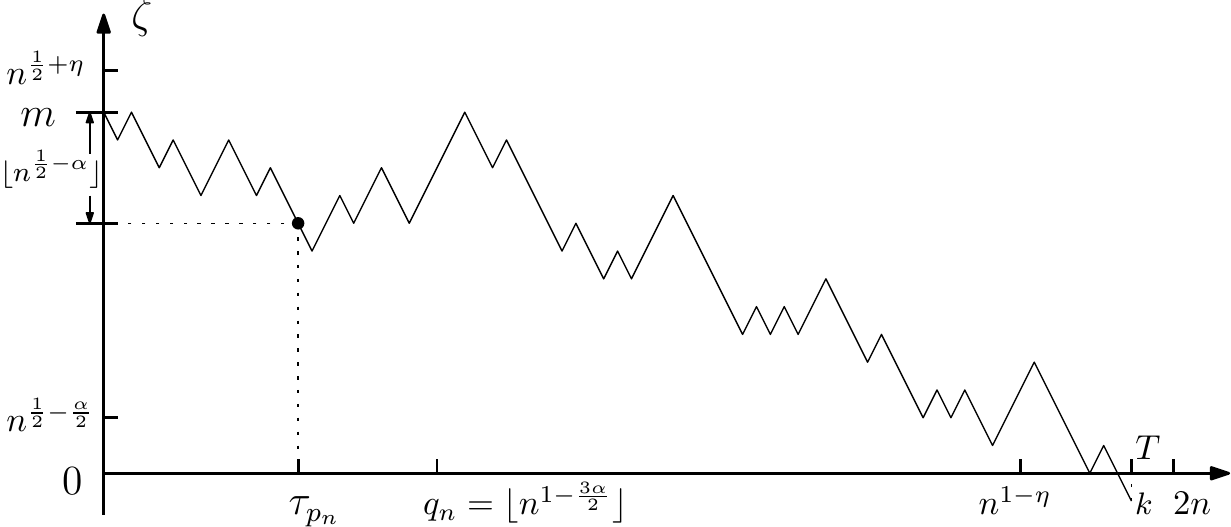}
 \caption{\textsf{Illustration of the proof of Lemma \ref{key4D1}} \label{fig-abscont}}
 \end{center}
\end{figure}

Let us consider $(m,k)\in \Delta_n$. We have
\begin{align}
\label{keyeq11}
&\P_m\Big(\{\wh W_j \not = \wh W_0\;, \forall j\in\llbracket 1,\tau_{p_n}\rrbracket\}\cap\{\tau_{p_n} \leq q_n\} \cap\{T=k\}\Big)\notag\\
&\quad=\E_m\Big[\ind{\{\tau_{p_n} \leq q_n\}}\,
\ind{\{\wh W_j \not = \wh W_0\;, \forall j\in\llbracket 1,\tau_{p_n}\rrbracket\}}
\P_{m-p_n}(T=k-\ell)_{\ell=\tau_{p_n}}\Big],
\end{align}
where we have used the strong Markov property at $\tau_{p_n}$. We now would like to say that the
quantity $\P_{m-p_n}(T=k-\ell)$, evaluated at $\ell=\tau_{p_n}$, does not differ too much from $\P_{m}(T=k)$
under our conditions on $m,k$ and $\tau_{p_n}$
(see Fig.\,4 for an illustration).  Let $k'$ be an
integer such that $k-q_n\leq k'\leq k$ and $k'+m-p_n$ is odd. By
Kemperman's formula,
\begin{equation}
\label{Kemp2}
\P_{m-p_n}(T=k')= \frac{m-p_n+1}{k'}\,\P_0(\zeta_{k'}=m-p_n+1)
\end{equation}
and by \eqref{LLT},
\begin{equation}
\label{keyeq3}
\P_{0}(\zeta_{k'}=m-p_n+1)
= \sqrt{\frac{2}{\pi k'}}
\exp\Big(-\frac{(m-p_n+1)^2}{2k'}\Big)\,\exp\!\Big(O(\frac{1}{k'}+\frac{(m-p_n+1)^4}{{k'}^3})\Big).
\end{equation}
Next observe that
\begin{align*}
\Big|\frac{(m-p_n+1)^2}{2k'}- \frac{(m+1)^2}{2k}\Big|
&\leq \frac{(m+1)^2}{2} \big(\frac{1}{k'}-\frac{1}{k}\big)
+ \frac{(m+1)^2 -(m-p_n+1)^2}{2k'}\\
&\leq \frac{q_n(m+1)^2}{kk'} + \frac{p_n(m+1)}{k'}
\end{align*}
which tends to $0$ as $n\to\infty$, uniformly in $m,k,k'$. 
Comparing the estimate for $\P_{m-p_n}(T=k')$
that follows from~\eqref{Kemp2} and~\eqref{keyeq3}
with the similar estimate for $\P_m(T=k)$ that follows from~\eqref{Kemp} and~\eqref{LLT}, we get
$$\lim_{n\to\infty} \left(
\sup_{m,k,k'} \left| \frac{\P_{m-q_n}(T=k')}{\P_m(T=k)} -1 \right|\right)= 0,$$
where the supremum is over all choices of $(m,k,k')$ 
such that $(m,k)\in \Delta_n$ and $k'$ satisfies the preceding conditions. 
Using~\eqref{keyeq11}, we obtain that, for any fixed $\delta>0$, we have for all sufficiently large $n$, for every $(m,k)\in \Delta_n$,
\begin{align*}
&(1-\delta)\,\P_m\big(\{\tau_{p_n} \leq q_n\} \cap\{\wh W_j \not = \wh W_0\;, \forall j\in\llbracket 1,\tau_{p_n}\rrbracket\}\big)\\
&\quad\leq  \P_m\big(\{\tau_{p_n} \leq q_n\} \cap\{\wh W_j \not = \wh W_0\;, \forall j\in\llbracket 1,\tau_{p_n}\rrbracket\}\mid T=k\big)\\ 
&\quad\leq (1+\delta)\,\P_m\big(\{\tau_{p_n} \leq q_n\} \cap\{\wh W_j \not = \wh W_0\;, \forall j\in\llbracket 1,\tau_{p_n}\rrbracket\}\big).
\end{align*}
The quantity $\P_m\big(\{\tau_{p_n} \leq q_n\} \cap\{\wh W_j \not = \wh W_0\;, \forall j\in\llbracket 1,\tau_{p_n}\rrbracket\}\big)$ does not depend on $m\in\Z$, and 
$(\log n)\, \P_0(\tau_{p_n} > q_n)$ tends to $0$ by~\eqref{exitRW}.
Using Proposition~\ref{firstesti}, we have thus
$$\lim_{n\to\infty} (\log n)\,
\P_0\big(\{\tau_{p_n} \leq q_n\} \cap\{\wh W_j \not = \wh W_0\;, \forall j\in\llbracket 1,\tau_{p_n}\rrbracket\}\big)=\frac{4\pi^2\sigma^4}{1-2\alpha}.$$
The estimate 
of the lemma follows from the preceding considerations and~\eqref{keyeq00}.

It remains to prove~\eqref{keyeq00}. 
If $(m,k)\in \Delta_n$, we have
$$\P_m(\tau_{p_n} \geq q_n \mid T=k)=
\frac{\P_m(\{\tau_{p_n} \geq q_n\}\cap\{T=k\})}
{\P_m(T=k)}.$$
Recall formula~\eqref{Kemp} for $\P_m(T=k)$ and also note that by~\eqref{LLT}, 
\begin{equation}
\label{keyeq1}
\P_0(\zeta_k=m+1)= \sqrt{\frac{2}{\pi k}}
\exp\Big(-\frac{(m+1)^2}{2k}\Big)\,\exp\!\Big( O(\frac{1}{k}+\frac{m^4}{k^3})\Big),
\end{equation}
when $n\to\infty$, uniformly in $(m,k)\in \Delta_n$. Notice that $\frac{1}{k}+\frac{m^4}{k^3}\longrightarrow 0$ as $n\to\infty$,
 uniformly in $(m,k)\in \Delta_n$, and that $\frac{m^2}{2k}\leq n^{3\eta}$
 if $(m,k)\in \Delta_n$. By our choice of $\eta$, it follows that
\begin{equation}
\label{keyeq0}
\P_m(\zeta_{q_n}>m+n^{\frac{1}{2}-\frac{\alpha}{2}}\mid T=k)
\leq \frac{\P_m(\zeta_{q_n}>m+n^{\frac{1}{2}-\frac{\alpha}{2}})}{\P_m(T=k)}
=\frac{k}{m+1} \frac{\P_0(\zeta_{q_n}>n^{\frac{1}{2}-\frac{\alpha}{2}})}{\P_0(\zeta_k=m+1)}
=O\big(\frac{1}{n}\big)
\end{equation}
as $n\to\infty$, uniformly in $m$
and $k$. 

On the other hand, by applying the Markov property at time $q_n$, we have
$$\P_m\big(\{\tau_{p_n} \geq q_n\}\cap\{\zeta_{q_n}\leq m+n^{\frac{1}{2}-\frac{\alpha}{2}}\}  \cap \{T=k\}\big)
=\E_m\Big[ \ind{\{\tau_{p_n} \geq q_n\}\cap \{\zeta_{q_n}\leq m+n^{\frac{1}{2}-\frac{\alpha}{2}}\}}
\,\P_{\zeta_{q_n}}( T = k-q_n)\Big].$$
On the event $\{\tau_{p_n} \geq q_n\}\cap\{\zeta_{q_n}\leq m+n^{\frac{1}{2}-\frac{\alpha}{2}}\}$ we have 
$m- p_n\leq \zeta_{q_n}\leq m+n^{\frac{1}{2}-\frac{\alpha}{2}}$, $\P_m$ a.s. If
$m-p_n\leq m'\leq m+n^{\frac{1}{2}-\frac{\alpha}{2}}$ and $m'+k-q_n$ is odd, using again Kemperman's formula, we have
$$\P_{m'}(T = k-q_n) = \frac{m'+1}{k-q_n}
\P_{m'}(\zeta_{k-q_n}=-1)= \frac{m'+1}{k-q_n}
\P_{0}(\zeta_{k-q_n}=m'+1).$$
Furthermore, from~\eqref{LLT},
\begin{equation}
\label{keyeq2}
\P_{0}(\zeta_{k-q_n}=m'+1)
= \sqrt{\frac{2}{\pi (k-q_n)}}
\exp\Big(-\frac{(m'+1)^2}{2(k-q_n)}\Big)\,\exp\!\Big(O(\frac{1}{k}+\frac{m'^4}{k^3})\Big).
\end{equation}
Now observe that 
$$-\frac{(m'+1)^2}{2(k-q_n)} + \frac{(m+1)^2}{2k}
\leq -\frac{(m'+1)^2-(m+1)^2}{2k}=- \frac{(m'-m)(m'+m+2)}{2k}$$
and the right-hand side tends to $0$ as $n\to\infty$, uniformly in
$(m,k)\in \Delta_n$ and $m'$ such that $m-p_n\leq m'\leq m+n^{\frac{1}{2}-\frac{\alpha}{2}}$.
By comparing~\eqref{keyeq1} and~\eqref{keyeq2}, noting that 
$m'\leq 2m$ under our assumptions, we get
$$\limsup_{n\to\infty}\left( \sup_{m,k,m'} \frac{\P_{m'}(T = k-q_n)}{\P_m(T=k)}
\right)\leq 2.$$
It follows that, for all $n$ sufficiently large, we have, for every 
$(m,k)\in \Delta_n$,
$$ \frac{\P_m\big(\{\tau_{p_n} \geq q_n\}\cap\{\zeta_{q_n}\leq m+n^{\frac{1}{2}-\frac{\alpha}{2}}\}  \cap \{T=k\}\big)}{\P_m(T=k)}
\leq 3\, \P_m(\tau_{p_n} \geq q_n)= 3 \,\P_0(\tau_{p_n} \geq q_n).$$
Recalling~\eqref{exitRW}, we have thus proved that, for every $\delta\in(0,\frac{\alpha}{4})$,
$$\lim_{n\to\infty} n^{\delta}\,\sup_{(m,k)\in \Delta_n}\P_m\big(\{\tau_{p_n} \geq q_n\}\cap\{\zeta_{q_n}\leq m+n^{\frac{1}{2}-\frac{\alpha}{2}}\}  \mid T=k\big)= 0,$$
and by combining this with~\eqref{keyeq0}, we get
the desired estimate~\eqref{keyeq00}.
 \endproof
 
 \medskip
 We set, for every $n\geq 1$,
 $$R^\bullet_n \colonequals \#\big\{\wh W^{(n)}_0,\wh W^{(n)}_1,\ldots, \wh W^{(n)}_{2n}\big\}.$$
 
 \begin{proposition}
 \label{secondmo}
 We have
 $$\limsup_{n\to\infty} \Big(\frac{\log n}{n}\Big)^2 \,\E\big[(R^\bullet_n)^2\big] \leq (8\pi^2\sigma^4)^2.$$
 \end{proposition}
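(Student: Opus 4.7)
The strategy parallels the second-moment computation in the proof of Theorem~\ref{freesnake4D}, replacing Proposition~\ref{firstesti} with the conditional estimate of Lemma~\ref{key4D1} in order to accommodate the conditioning $\{T=2n+1\}$. Expand
$$\E[(R^\bullet_n)^2] = 2\sum_{0\leq i<j\leq 2n} p(i,j) + \E[R^\bullet_n],$$
where $p(i,j) = \P(\wh W^{(n)}_k\neq \wh W^{(n)}_i, \forall k\in\rrbracket i,2n\rrbracket;\ \wh W^{(n)}_\ell\neq \wh W^{(n)}_j, \forall \ell\in\rrbracket j,2n\rrbracket)$. A first-moment computation analogous to the one developed below (write $\E[R^\bullet_n]=\sum_i \P(\wh W^{(n)}_k\neq \wh W^{(n)}_i, \forall k>i)$ and invoke Lemma~\ref{key4D1}) gives $\E[R^\bullet_n]=O(n/\log n)$, so the diagonal is $o((n/\log n)^2)$. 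Fix $\alpha\in(0,1/4)$ and $\eta>0$ as in Lemma~\ref{key4D1}, and call a pair $(i,j)$ \emph{good} if $j-i\geq n^{1-\alpha}$, $2n+1-j\geq n^{1-\eta}$ and $\zeta^{(n)}_i,\zeta^{(n)}_j\in[n^{1/2-\alpha/2},n^{1/2+\eta}]$; the parity constraint in $\Delta_n$ holds automatically since $\zeta^{(n)}_k\equiv k\pmod 2$ under the conditioning $T=2n+1$, so both $(\zeta^{(n)}_i,2n+1-i)$ and $(\zeta^{(n)}_j, 2n+1-j)$ belong to $\Delta_n$.

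For each index $t$, set $\tau^{(t)}_{p_n}:= \inf\{k\geq t : \zeta^{(n)}_k = \zeta^{(n)}_t - p_n\}$ and let $\tilde A_i$, $\tilde B_j$ denote the events that $\wh W^{(n)}$ avoids $\wh W^{(n)}_i$ on $\rrbracket i, \tau^{(i)}_{p_n}\rrbracket$ and $\wh W^{(n)}_j$ on $\rrbracket j, \tau^{(j)}_{p_n}\rrbracket$, respectively. On the good event, $\tau^{(j)}_{p_n}\leq 2n+1$ because $\zeta^{(n)}$ must descend from $\zeta^{(n)}_j>p_n$ down to $-1$ by time $2n+1$, so $p(i,j)\leq \P(\tilde A_i\cap \tilde B_j)$. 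Conditioning on the natural filtration $\mathcal{F}_j$ and applying the Markov property of the snake at $j$, the law of $(W^{(n)}_{j+k})_{k\geq 0}$ is $\P_{(W^{(n)}_j)}(\cdot\mid T=2n+1-j)$; translating by $-\wh W^{(n)}_j$ (which leaves $\tilde B_j$ invariant) and using the remark preceding Proposition~\ref{firstesti} to identify the conditional law of $W^{(n),*}_j$ given $\zeta^{(n)}_j=m$ with the law of $W_0$ under $\P_m$, Lemma~\ref{key4D1} applies and yields
$$\P(\tilde B_j\mid \mathcal{F}_j)\leq \frac{4\pi^2\sigma^4}{(1-2\alpha)\log n}\,(1+o(1))$$
uniformly on the good event. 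Running the same argument at time $i$ gives the same bound for $\P(\tilde A_i\cap\{(\zeta^{(n)}_i,2n+1-i)\in\Delta_n\})$, whence
$$p(i,j)\cdot\mathbf{1}_{\{(i,j)\text{ good}\}}\leq \Big(\frac{4\pi^2\sigma^4}{(1-2\alpha)\log n}\Big)^2(1+o(1)).$$

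The bad pairs are controlled by routine truncations: pairs with $j-i<n^{1-\alpha}$ contribute $O(n^{1-\alpha}\cdot\E[R^\bullet_n])=O(n^{2-\alpha}/\log n)$; pairs where $\zeta^{(n)}_i$ or $\zeta^{(n)}_j$ falls below $n^{1/2-\alpha/2}$ are controlled by the observation following Lemma~\ref{estimate-excu}, which provides an $L^2$-bound of order $o((n/\log n)^2)$ on the count of such indices, combined with the trivial estimate $p(i,j)\leq 1$; pairs where one of these lifetimes exceeds $n^{1/2+\eta}$ are negligible by Lemma~\ref{estimate-excu}(ii) (choosing $r$ large); pairs with $2n+1-j<n^{1-\eta}$ contribute $O(n^{2-\eta}/\log n)$. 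All of these are $o((n/\log n)^2)$. Summing the good-pair bound over at most $(2n)^2/2$ pairs, multiplying by the factor $2$ from the expansion and scaling by $(\log n/n)^2$ gives $\limsup \leq 4(4\pi^2\sigma^4)^2/(1-2\alpha)^2 = (8\pi^2\sigma^4)^2/(1-2\alpha)^2$, and letting $\alpha\downarrow 0$ concludes. The main technical obstacle lies in the conditional application of Lemma~\ref{key4D1}: one must verify that, after conditioning on $\mathcal{F}_j$, the effective law of the future snake past $j$ is compatible with the $\P_m(\cdot\mid T=k)$ framework of that lemma, which relies crucially on the Markov property of the discrete snake together with the distributional identity mentioned in the remark preceding Proposition~\ref{firstesti}.
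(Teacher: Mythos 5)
There is a genuine gap at the heart of your argument, and it is exactly the point you flag at the end as the ``main technical obstacle.'' You claim that, after conditioning on $\mathcal{F}_j$, Lemma~\ref{key4D1} yields the \emph{pointwise} bound $\P(\tilde B_j\mid\mathcal{F}_j)\leq \frac{4\pi^2\sigma^4}{(1-2\alpha)\log n}(1+o(1))$ uniformly on the good event. This is not what Lemma~\ref{key4D1} gives. Conditionally on $\mathcal{F}_j$, the law of the future snake is $\P_{(W^{(n)}_j)}(\cdot\mid T=2n+1-j)$ for a \emph{fixed} realization $W^{(n)}_j$; the event $\tilde B_j$ depends on the actual increments of $W^{(n)}_j$ over its last $p_n$ steps, so $\P(\tilde B_j\mid\mathcal{F}_j)$ is a nondegenerate random variable, large for some realizations of the historical path and small for others. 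Lemma~\ref{key4D1}, via the remark preceding Proposition~\ref{firstesti}, controls $\P_m(\cdot\mid T=k)$, which \emph{averages} over $W_0$ being a random walk path of length $m$; it says nothing about the probability for a specific $w$. You cannot identify ``the conditional law of $W^{(n),*}_j$ given $\zeta^{(n)}_j=m$'' with the law of $W_0$ under $\P_m$ \emph{after} having already conditioned on $\mathcal{F}_j$: once $\mathcal{F}_j$ is conditioned on, $W^{(n),*}_j$ is deterministic.

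The paper's proof supplies precisely the missing decoupling. After conditioning on $\mathcal{F}_i$ and shifting to $\P_m$-notation with $r=j-i$, the key observation is stated explicitly: conditionally on the lifetime trajectory $(\zeta_1,\ldots,\zeta_r)$, the backward path $(W_r(\zeta_r)-W_r(\zeta_r-\ell))_{\ell\geq 0}$ is both distributed as the random walk $S$ \emph{and} independent of the event $\{\wh W_\ell\not=\wh W_0,\ \forall\ell\in\llbracket 1,\tau_{p_n}\rrbracket\}$ (since $\tau_{p_n}\leq q_n<r$ forces the two events to live on disjoint portions of the tree). This independence is what allows the paper to replace $\E_{(W_r)}$ by $\E_{\zeta_r}$ and only then invoke Lemma~\ref{key4D1}, yielding a bound that is averaged over the snake's past rather than pointwise. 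Without this observation, the two factors $\tilde A_i$ and $\tilde B_j$ are correlated through $W^{(n)}_j$ and the product bound does not follow. Everything else in your write-up (the expansion, the near-diagonal truncation, the control of small/large $\zeta^{(n)}_i$ via Lemma~\ref{estimate-excu}, the counting and the $\alpha\downarrow 0$ limit) matches the paper and is fine; the one missing idea is this conditional-independence step.
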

 
 \proof We note that
 $$R^\bullet_n= \sum_{i=1}^{2n}
 \ind{\{\wh W^{(n)}_\ell \not = \wh W^{(n)}_i\;, \forall \ell\in\llbracket i+1,2n\rrbracket\}}$$
 and therefore 
 \begin{equation}
 \label{expanmo2}
 \E\big[(R^\bullet_n)^2\big]= \sum_{i,j=1}^{2n} \P(A_n(i,j)),
 \end{equation}
 where $A_n(i,j)$ is defined by
$$A_n(i,j)\colonequals \big\{\wh W^{(n)}_\ell \not = \wh W^{(n)}_i\;, \forall \ell\in\llbracket i+1,2n\rrbracket\big\}
\cap \big\{\wh W^{(n)}_\ell \not = \wh W^{(n)}_j\;, \forall \ell\in\llbracket j+1,2n\rrbracket\big\}.$$

We fix $\alpha\in(0,1/4)$, and define $p_n$ and $q_n$
for every $n\geq 1$ as above. We also fix $\eta>0$ so that the conclusion 
of Lemma~\ref{key4D1} holds. 

In view of proving the proposition, we will use formula~\eqref{expanmo2}.
In this formula, we can restrict our attention to values
of $i$ and $j$ such that $j-i > n^{1-\frac{\alpha}{2}}$ and $j <2n-n^{1-\eta}$
(or the same with $i$ and $j$ interchanged). 
Also, when bounding $\P(A_n(i,j))$, we may impose the additional constraint
that $n^{\frac{1}{2}-\frac{\alpha}{2}}\leq \zeta^{(n)}_i\leq n^{\frac{1}{2}+\eta}$
and $n^{\frac{1}{2}-\frac{\alpha}{2}}\leq \zeta^{(n)}_j\leq n^{\frac{1}{2}+\eta}$:
Indeed, Lemma~\ref{estimate-excu} readily shows that the event where either of these
constraints is not satisfied will give a negligible contribution to the sum in~\eqref{expanmo2}.

Let us fix $i,j\in\llbracket 1,2n\rrbracket$ such that $j-i > n^{1-\frac{\alpha}{2}}$ and $j <2n-n^{1-\eta}$.
By using the definition of $W^{(n)}$ as a conditioned process and applying the Markov property at time $i$, we have
\begin{align}
\label{moment2tech1}
&\P\big(A_n(i,j)\cap\{ n^{\frac{1}{2}-\frac{\alpha}{2}}\leq \zeta^{(n)}_i\leq n^{\frac{1}{2}+\eta} \}\cap \{n^{\frac{1}{2}-\frac{\alpha}{2}}\leq \zeta^{(n)}_j\leq n^{\frac{1}{2}+\eta}\}
\big)\notag\\
&= \frac{\E\Big[\ind{\{n^{\frac{1}{2}-\frac{\alpha}{2}}\leq \zeta_i\leq n^{\frac{1}{2}+\eta} \}}\ind{\{T>i\}}\, \E_{\zeta_i}\Big[\ind{\{n^{\frac{1}{2}-\frac{\alpha}{2}}\leq \zeta_{j-i}\leq n^{\frac{1}{2}+\eta} \}}\ind{A'_n(i,j)}\ind{\{T=2n+1-i\}}\Big]\Big]}{\P(T=2n+1)}
\end{align}
where 
$$A'_n(i,j)\colonequals \big\{\wh W_\ell \not = 
\wh W_0\;, \forall \ell\in\llbracket1,2n-i\rrbracket\big\}\cap 
\big\{\wh W_\ell \not = \wh W_{j-i}\;, \forall \ell\in\llbracket j-i+1,2n-i\rrbracket\big\}.$$

Setting $r=j-i$, we are thus led to bound
\begin{equation}
\label{moment2tech0}
\E_m\Big[ \ind{\{\wh W_\ell \not = 
\wh W_0\;, \forall \ell\in\llbracket1,k-1\rrbracket\}}\,\ind{\{\wh W_\ell \not = \wh W_r\;, \forall \ell\in\llbracket r+1,k-1\rrbracket\}}\,\ind{\{n^{\frac{1}{2}-\frac{\alpha}{2}}\leq \zeta_r\leq n^{\frac{1}{2}+\eta}\}}\,\ind{\{T=k\}}\Big]
\end{equation}
where $n^{\frac{1}{2}-\frac{\alpha}{2}}\leq m \leq n^{\frac{1}{2}+\eta}$, $r>n^{1-\frac{\alpha}{2}}$ and $r+ n^{1-\eta} <k\leq 2n$ (and moreover $k+m$
needs to be odd). Recall the notation $\tau_{p_n}$, and set
$$\tau_{p_n}^{(r)}\colonequals \inf\{\ell\geq r: \zeta_\ell=\zeta_{r}-p_n\}.$$
Thanks to~\eqref{keyeq00},  we can also introduce the constraint
$\tau_{p_n}\leq q_n$ inside the expectation in~\eqref{moment2tech0}, up
to an error that is bounded above by $\P_m(T=k) \,o(n^{-\delta})$ for some $\delta>0$ (here the term $o(n^{-\delta})$ is uniform in $m,r,k$ 
satisfying the preceding conditions).
Furthermore, we get an upper bound by replacing the interval
$\llbracket1,k-1\rrbracket$, resp. $\llbracket r+1,k-1\rrbracket$, 
by $\llbracket 1, \tau_{p_n}\rrbracket$, resp. $\llbracket r+1, \tau^{(r)}_{p_n}\rrbracket$.
Next, using the Markov property at time $r$, and noting that $r>q_n$, we have
\begin{align*}
&\E_m\Big[ \ind{\{\wh W_\ell \not = 
\wh W_0\;, \forall \ell\in\llbracket 1, \tau_{p_n}\rrbracket\}}\,
\ind{\{\tau_{p_n}\leq q_n\}}\,\ind{\{\wh W_\ell \not = \wh W_r\;, \forall \ell\in\llbracket r+1, \tau^{(r)}_{p_n}\rrbracket\}}\,\ind{\{n^{\frac{1}{2}-\frac{\alpha}{2}}\leq \zeta_r\leq n^{\frac{1}{2}+\eta}\}}\,\ind{\{T=k\}}\Big]\\
&=\E_m\Big[ \ind{\{\wh W_\ell \not = 
\wh W_0\;, \forall \ell\in\llbracket 1, \tau_{p_n}\rrbracket\}}\,
\ind{\{\tau_{p_n}\leq q_n\}}\,\ind{\{T>r\}}\,\ind{\{n^{\frac{1}{2}-\frac{\alpha}{2}}\leq \zeta_r\leq n^{\frac{1}{2}+\eta}\}}\,
\E_{(W_r)}\Big[\ind{\{\wh W_\ell \not = 
\wh W_0\;, \forall \ell\in\llbracket 1, \tau_{p_n}\rrbracket\}}\,\ind{\{T=k-r\}}\Big]\Big]
\end{align*}
See Fig.\,5 for an illustration. 

\begin{figure}[!htbp]
 \begin{center}
 \includegraphics[width=14.5cm]{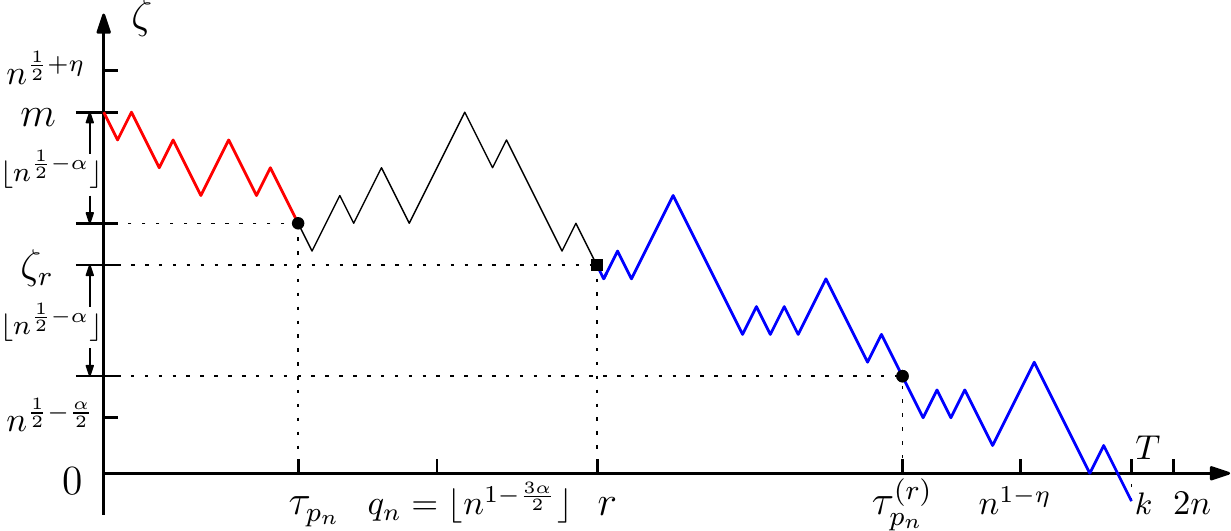}
 \caption{\textsf{Illustration of the proof of Proposition \ref{secondmo}.} \label{fig-upperbound}}
 \end{center}
\end{figure}
Then the key observation is the following. Let $z_0=m,z_1,\ldots,z_r$
be a simple random walk trajectory over $\llbracket 0,r\rrbracket$ such that
$0\leq \min\{z_\ell:0\leq \ell\leq r\} \leq m-p_n$. Then under $\P_m$,
conditionally on the event $\{\zeta_1=z_1,\ldots,\zeta_r=z_r\}$, the path
$(W_r(z_r)-W_r(z_r-\ell))_{\ell\geq 0}$ is {\it independent} of the event $\{\wh W_\ell \not = 
\wh W_0\;, \forall \ell\in\llbracket 1, \tau_{p_n}\rrbracket\}$, and distributed as 
$(S_\ell)_{\ell\geq 0}$. This property easily follows from
the construction of the discrete snake.

Thanks to the latter observation, we may rewrite the right-hand side of the
last display, after conditioning with respect to $\zeta_1,\ldots,\zeta_r$,
in the form
\begin{equation}
\label{keytech100}
\E_m\Big[ \ind{\{\wh W_\ell \not = 
\wh W_0\;, \forall \ell\in\llbracket 1, \tau_{p_n}\rrbracket\}}\,
\ind{\{\tau_{p_n}\leq q_n\}}\,\ind{\{T>r\}}\,\ind{\{n^{\frac{1}{2}-\frac{\alpha}{2}}\leq \zeta_r\leq n^{\frac{1}{2}+\eta}\}}
\E_{\zeta_r}\Big[\ind{\{\wh W_\ell \not = 
\wh W_0\;, \forall \ell\in\llbracket 1, \tau_{p_n}\rrbracket\}}\,\ind{\{T=k-r\}}\Big]\Big].
\end{equation}
Recall that $k-r>n^{1-\eta}$, and let $\ve>0$. It follows from Lemma~\ref{key4D1} that, 
for $n$ large enough, on the event
$\{n^{\frac{1}{2}-\frac{\alpha}{2}}\leq \zeta_r\leq n^{\frac{1}{2}+\eta}\}$, the quantity 
$$ \E_{\zeta_r}\Big[\ind{\{\wh W_\ell \not = 
\wh W_0\;, \forall \ell\in\llbracket 1, \tau_{p_n}\rrbracket\}}\,\ind{\{T=k-r\}}\Big]$$
is bounded above by 
$$\Big(\frac{4\pi^2\sigma^4}{1-2\alpha} +\ve\Big)(\log n)^{-1}\,\P_{\zeta_r}(T=k-r).$$ 
Hence the quantity~\eqref{keytech100} is also bounded by
\begin{align*}
&\Big(\frac{4\pi^2\sigma^4}{1-2\alpha} +\ve\Big)(\log n)^{-1}\,
\E_m\Big[ \ind{\{\wh W_\ell \not = 
\wh W_0\;, \forall \ell\in\llbracket 1, \tau_{p_n}\rrbracket\}}\,
\ind{\{\tau_{p_n}\leq q_n\}}\,\ind{\{T>r\}}\,\ind{\{n^{\frac{1}{2}-\frac{\alpha}{2}}\leq \zeta_r\leq n^{\frac{1}{2}+\eta}\}}\,\P_{\zeta_r}(T=k-r)\Big]\\
&\quad=\Big(\frac{4\pi^2\sigma^4}{1-2\alpha} +\ve\Big)(\log n)^{-1}\, \E_m\Big[ \ind{\{\wh W_\ell \not = 
\wh W_0\;, \forall \ell\in\llbracket 1, \tau_{p_n}\rrbracket\}}\,
\ind{\{\tau_{p_n}\leq q_n\}}\,\ind{\{n^{\frac{1}{2}-\frac{\alpha}{2}}\leq \zeta_r\leq n^{\frac{1}{2}+\eta}\}}\,\ind{\{T=k\}}\Big],
\end{align*}
again by the Markov property at time $r$. Finally, another application of
Lemma~\ref{key4D1} shows that the quantity in the last display is bounded above 
for $n$ large by
$$\Big(\frac{4\pi^2\sigma^4}{1-2\alpha} +\ve\Big)^2(\log n)^{-2}\, \P_m(T=k).$$
Summarizing, we see that the quantity~\eqref{moment2tech0} is bounded
above for $n$ large by 
$$\Big(\Big(\frac{4\pi^2\sigma^4}{1-2\alpha} +\ve\Big)^2(\log n)^{-2}
+ o(n^{-\delta})\Big)\, \P_m(T=k).$$
Finally, from~\eqref{moment2tech1}, we have for $n$ large
\begin{align*}
&\P\big(A_n(i,j)\cap\{ n^{\frac{1}{2}-\frac{\alpha}{2}}\leq \zeta^{(n)}_i\leq n^{\frac{1}{2}+\eta} \}\cap \{n^{\frac{1}{2}-\frac{\alpha}{2}}\leq \zeta^{(n)}_j\leq n^{\frac{1}{2}+\eta}\}
\big)\\
&\leq \Big(\Big(\frac{4\pi^2\sigma^4}{1-2\alpha} +\ve\Big)^2(\log n)^{-2}
+ o(n^{-\delta})\Big)\,
\frac{\E\Big[\ind{\{T>i\}}\, \P_{\zeta_i}(T=2n+1-i)\Big]}{\P(T=2n+1)}\\
&= \Big(\frac{4\pi^2\sigma^4}{1-2\alpha} +\ve\Big)^2(\log n)^{-2}
+ o(n^{-\delta}),
\end{align*}
where the term $o(n^{-\delta})$ is uniform in $i$ and $j$ satisfying
the preceding conditions. 
The statement of the proposition follows by summing this bound over $i$ and $j$. \endproof

\begin{lemma}
\label{estimoment1}
We have
$$\liminf_{n\to\infty} \,\frac{\log n}{n}\,\E\big[R^\bullet_n\big]\geq 8\pi^2\sigma^4.$$
\end{lemma}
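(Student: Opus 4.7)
Plan. The plan is to parallel the structure of Proposition~\ref{secondmo}, using the Markov property of the discrete snake at a well-chosen time and applying Lemma~\ref{key4D1} to produce a lower bound. Start from
\[
\E[R^\bullet_n] = \sum_{i=0}^{2n} \P(A_i^n), \qquad A_i^n \colonequals \big\{\wh W^{(n)}_\ell \neq \wh W^{(n)}_i, \forall \ell \in \llbracket i+1, 2n\rrbracket\big\}.
\]
Fix $\alpha\in(0,1/4)$ small and $\eta>0$ as in Lemma~\ref{key4D1}. Restrict attention to the ``good'' range of indices $i\in\llbracket n^{1-\eta}, 2n-n^{1-\eta}\rrbracket$ together with the event $\{\zeta^{(n)}_i\in\llbracket \lceil n^{1/2-\alpha/2}\rceil,\lfloor n^{1/2+\eta}\rfloor\rrbracket\}$; Lemma~\ref{estimate-excu} ensures that the complementary contribution to the sum is $o(n/\log n)$.

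For each such good $i$ with $\zeta^{(n)}_i=m$ and $k=2n+1-i$, the Markov property of the snake at time $i$ under the excursion conditioning transforms the summand into
\[
\P(A_i^n\mid\zeta^{(n)}_i=m)=\P_m\big(\wh W_\ell\neq 0,\forall \ell\in\llbracket 1,k-1\rrbracket\,\big|\,T=k\big),
\]
which I lower-bound by
\[
\P_m\big(\wh W_\ell\neq 0,\forall \ell\in\llbracket 1,\tau_{p_n}\rrbracket\,\big|\,T=k\big) - E_n(m,k),
\]
where $E_n(m,k)$ denotes the conditional probability that $\wh W$ avoids $0$ up to $\tau_{p_n}$ but visits $0$ somewhere in $\llbracket\tau_{p_n}+1,k-1\rrbracket$. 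Lemma~\ref{key4D1} handles the first term, giving $\tfrac{4\pi^2\sigma^4(1+o(1))}{(1-2\alpha)\log n}$ uniformly on the good range.

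The essential technical step is to control $E_n(m,k)$. Applying the strong Markov property at $\tau_{p_n}$: on the ``no return before $\tau_{p_n}$'' event $\mathrm{B}_n$, the snake tip is at $-S_{p_n}$, of typical Euclidean norm $\sqrt{p_n}=n^{1/4-\alpha/2}$, and the post-$\tau_{p_n}$ snake is a conditioned snake continuing to time $k$. The probability that its head ever visits $0$ must be bounded using the Green-function estimate~\eqref{Greenbound} for $\theta$ together with a finer argument in the spirit of Remark~2 after Proposition~\ref{suffcond}: a naive first-moment bound is insufficient since the expected number of post-$\tau_{p_n}$ returns is of order $n^\alpha$, so one needs a second-moment (variance) argument exploiting that the conditioning on $\mathrm{B}_n$ biases the snake's configuration away from the origin. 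The target estimate is $E_n(m,k)\leq f(\alpha)/\log n$ with $f(\alpha)\to 0$ as $\alpha\to 0^+$, uniformly in $(m,k)$ on the good range.

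Once $E_n$ is so controlled, summing the lower bound on $\P(A_i^n)$ over the $\sim 2n$ good indices $i$ (and using that the probability of $\zeta^{(n)}_i$ being in the good range tends to $1$ by the scaling limit of the contour function of a uniform random plane tree) yields
\[
\frac{\log n}{n}\,\E[R^\bullet_n] \geq \frac{8\pi^2\sigma^4}{1-2\alpha}(1-o(1)) - 2f(\alpha) + o(1).
\]
Taking $n\to\infty$ first and then $\alpha\to 0^+$ (so $f(\alpha)\to 0$) yields $\liminf_n\frac{\log n}{n}\E[R^\bullet_n]\geq 8\pi^2\sigma^4$, as required. I expect the bound on $E_n(m,k)$ to be the main obstacle: a straightforward Markov inequality is too lossy (it gives an expectation blowing up like $n^\alpha$), and one must carefully combine the joint conditioning on $\mathrm{B}_n$ and on $T=k$, probably invoking the multidimensional KMT strong invariance machinery advertised in the introduction, to extract the desired decay as $\alpha\to 0^+$.
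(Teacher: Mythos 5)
You take a genuinely different route from the paper, and there is a real gap at the step you flag yourself. The paper does not attempt any one-index estimate of $\P(A_i^n)$. Instead it fixes $\ve>0$, looks only at the portion $R^{\bullet,\ve}_n$ of the range generated in the time window $\llbracket\lfloor 2n\ve\rfloor,\,\lfloor 2n\ve\rfloor + n(\ve)\rrbracket$, and transfers the $L^2$ convergence of the range of the \emph{free} snake (Theorem~\ref{freesnake4D}) to $R^{\bullet,\ve}_n$ by the same absolute continuity device (Kemperman's formula plus the local limit theorem giving a uniformly bounded Radon--Nikodym density $C(\ve,a,b)$) as in the proof of Theorem~\ref{mainsuper}. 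The only inequality then needed is the trivial $R^\bullet_n\geq R^{\bullet,\ve}_n$, after which $\ve\to 0$. This is a soft transfer: the hard estimates are done once, in the free setting.

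Your plan, by contrast, hinges on the unproved bound $E_n(m,k)\leq f(\alpha)/\log n$ uniformly on the good range with $f(\alpha)\to 0$, and this is precisely where the approach is incomplete. Two remarks. First, your stated order of magnitude is wrong: the expected number of returns of the head to $0$ in $(\tau_{p_n},T]$ is not $n^\alpha$ but of order $\alpha\log n$; by~\eqref{Green} and the asymptotics in Lemma~\ref{keyestimate} it is roughly $2\sum_{j=p_n}^{m} G(-S_j)\approx \tfrac{\alpha}{2\pi^2\sigma^4}\log n$. This quantity grows with $n$, so, as you say, a first-moment bound gives nothing — not even $E_n = O(1/\log n)$. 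Second, the required decay $f(\alpha)/\log n$ demands that you quantify the clustering of head returns (showing the returns come in bursts of typical size $\sim\log n$, so that the hit probability is of order $\alpha/\log n$ rather than $\alpha\log n$), and do so under the joint conditioning on $B_n$ and on $T=k$. This is of comparable difficulty to Proposition~\ref{firstesti} itself, whose proof already consumes the whole of subsection~\ref{mainest} and the KMT machinery, and here one would need it in a conditioned setting where the spine is no longer a free random walk. You gesture at a second-moment or KMT argument but give nothing concrete; without it the proposal does not go through.
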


\proof Let $\delta>0$ and $\ve\in(0,\frac{1}{2})$. To simplify notation we write
$n(\ve)=\lfloor 2(1-2\ve)n\rfloor$ in this proof. We fix $0<a<b$ such that, if
$(\be_t)_{0\leq t\leq 1}$ denotes a normalized Brownian excursion defined under
the probability measure $P$, we have
$$P(\be_\ve\notin (a,b)\big)=P(\be_{1-\ve}\notin (a,b)\big)< \delta.$$
Since we know that
the sequence of processes
$((2n)^{-1/2} \zeta^{(n)}_{\lfloor 2nt\rfloor})_{0\leq t\leq 1}$ converges 
in distribution to $(\be_t)_{0\leq t\leq 1}$, it follows that, for every 
sufficiently large $n$,
\begin{equation}
\label{moment1tech2}
\P\Big(\zeta^{(n)}_{\lfloor 2n\ve\rfloor}
\notin [a\sqrt{2n},b\sqrt{2n}]\hbox{ or } \zeta^{(n)}_{\lfloor 2n\ve\rfloor+n(\ve)}
\notin [a\sqrt{2n},b\sqrt{2n}]\Big)\leq \delta.
\end{equation}
Let $\mu^{(n)}_\ve$ denote the law of $\zeta^{(n)}_{\lfloor 2n\ve\rfloor}$. If
$F_n$ is a nonnegative function on $\Z^{n(\ve)+1}$, the Markov property
gives
\begin{align*}
&\E\Big[\ind{\{a\sqrt{2n}\leq \zeta^{(n)}_{\lfloor 2n\ve\rfloor}\leq b\sqrt{2n}\}}\,
\ind{\{a\sqrt{2n}\leq \zeta^{(n)}_{\lfloor 2n\ve\rfloor+n(\ve)}\leq b\sqrt{2n}\}}\,
F_n\big((\zeta^{(n)}_{\lfloor 2n\ve\rfloor+k})_{0\leq k\leq n(\ve)}\big)\Big]\\
&=\E\Big[\ind{\{a\sqrt{2n}\leq \zeta^{(n)}_{\lfloor 2n\ve\rfloor}\leq b\sqrt{2n}\}}\,
\E_{\zeta^{(n)}_{\lfloor 2n\ve\rfloor}}\Big[\ind{\{a\sqrt{2n}\leq \zeta_{n(\ve)}\leq b\sqrt{2n}\}}
F_n\big((\zeta_{k})_{0\leq k\leq n(\ve)}\big)\,\Big|\, T=2n+1-\lfloor 2n\ve\rfloor\Big]\Big]\\
&=\sum_{a\sqrt{2n}\leq m\leq b\sqrt{2n}} \mu^{(n)}_\ve(m)\,
\frac{\E_m\Big[\ind{\{a\sqrt{2n}\leq \zeta_{n(\ve)}\leq b\sqrt{2n}\}}
F_n\big((\zeta_{k})_{0\leq k\leq n(\ve)}\big)\,\ind{\{T=2n+1-\lfloor 2n\ve\rfloor\}}\Big]}
{\P_m(T=2n+1-\lfloor 2n\ve\rfloor)}\\
&=\sum_{a\sqrt{2n}\leq m\leq b\sqrt{2n}} \mu^{(n)}_\ve(m)\,
\frac{\E_m\Big[\ind{\{a\sqrt{2n}\leq \zeta_{n(\ve)}\leq b\sqrt{2n}\}}
F_n\big((\zeta_{k})_{0\leq k\leq n(\ve)}\big)\, \ind{\{T>n(\ve)\}}\,\P_{\zeta_{n(\ve)}}(T=\wt n(\ve))\Big]}
{\P_m(T=2n+1-\lfloor 2n\ve\rfloor)}
\end{align*}
where $\wt n(\ve)\colonequals 2n+1-\lfloor 2n\ve\rfloor-n(\ve)$.

Let $m,m'\in[a\sqrt{2n},b\sqrt{2n}]$ be such that $m+\lfloor 2n\ve\rfloor$ and 
$m'+\lfloor 2n\ve\rfloor + n(\ve)$ are even. By Kemperman's formula~\eqref{Kemp},
$$\frac{\P_{m'}(T=\wt n(\ve))}{\P_m(T=2n+1-\lfloor 2n\ve\rfloor)}
= \frac{2n+1-\lfloor 2n\ve\rfloor}{\wt n(\ve)}\, \frac{m'+1}{m+1}\,
\frac{\P_0(\zeta_{\wt n(\ve)} =m'+1)}{ \P_0(\zeta _{2n+1-\lfloor 2n\ve\rfloor}=m+1)}$$
and using~\eqref{LLT}, we easily obtain that there exists
a finite constant $C(\ve,a,b)$ such that, for every sufficiently large $n$,
and every $m,m'$ satisfying the above conditions,
$$\frac{\P_{m'}(T=\wt n(\ve))}{\P_m(T=2n+1-\lfloor 2n\ve\rfloor)}\leq C(\ve,a,b).$$
We thus obtain that, for every large enough $n$,
\begin{align*}
&\E\Big[\ind{\{a\sqrt{2n}\leq \zeta^{(n)}_{\lfloor 2n\ve\rfloor}\leq b\sqrt{2n}\}}\,
\ind{\{a\sqrt{2n}\leq \zeta^{(n)}_{\lfloor 2n\ve\rfloor+n(\ve)}\leq b\sqrt{2n}\}}\,
F_n\big((\zeta^{(n)}_{\lfloor 2n\ve\rfloor+k})_{0\leq k\leq n(\ve)}\big)\Big]\\
&\leq C(\ve,a,b) \sum_{a\sqrt{2n}\leq m\leq b\sqrt{2n}} \mu^{(n)}_\ve(m)\,
\E_m\Big[\ind{\{a\sqrt{2n}\leq \zeta_{n(\ve)}\leq b\sqrt{2n}\}}
F_n\big((\zeta_{k})_{0\leq k\leq n(\ve)}\big)\, \ind{\{T>n(\ve)\}}\Big]\\
&\leq  C(\ve,a,b) \sum_{a\sqrt{2n}\leq m\leq b\sqrt{2n}} \mu^{(n)}_\ve(m)\,
\E_m\Big[F_n\big((\zeta_{k})_{0\leq k\leq n(\ve)}\big)\Big].
\end{align*}
Let $G_n$ be a nonnegative measurable function on $\mathcal{W}^{n(\ve)+1}$. 
The preceding bound remains valid if we replace $F_n((\zeta^{(n)}_{\lfloor 2n\ve\rfloor+k})_{0\leq k\leq n(\ve)})$ by $G_n((W^{(n)}_{\lfloor 2n\ve\rfloor+k})_{0\leq k\leq n(\ve)})$ in the left-hand side and 
$F_n((\zeta_{k})_{0\leq k\leq n(\ve)})$ by $G_n((W_{k})_{0\leq k\leq n(\ve)})$ in the right-hand side (just use the fact that the conditional distribution 
of $W^{(n)}$  given $\zeta^{(n)}$ is the same as the conditional 
distribution of $W$ given $\zeta$). In particular, if we let
$G_n(w_0,w_1,\ldots,w_{n(\ve)})$ be the indicator function of the set where
$$\Big|\frac{\log n(\ve)}{n(\ve)}\#\big\{\wh w_0,\wh w_1,\ldots,\wh w_{n(\ve)}\big\} - 4\pi^2\sigma^4\Big| > \delta,$$
we obtain that
\begin{align}
\label{moment1tech3}
&\P\Big(\zeta^{(n)}_{\lfloor 2n\ve\rfloor}\in[a\sqrt{2n},b\sqrt{2n}],\;
\zeta^{(n)}_{\lfloor 2n\ve\rfloor+n(\ve)}\in[a\sqrt{2n},b\sqrt{2n}],\;
\Big|\frac{\log n(\ve)}{n(\ve)}\,R^{\bullet,\ve}_n - 4\pi^2\sigma^4\Big| > \delta\Big)
\notag\\
&\leq C(\ve,a,b)\,\P\Big(\Big|\frac{\log n(\ve)}{n(\ve)}\,R_{n(\ve)} - 4\pi^2\sigma^4\Big| > \delta\Big),
\end{align}
where 
$$R^{\bullet,\ve}_n \colonequals \#\big\{\wh W^{(n)}_{\lfloor 2n\ve\rfloor},
\wh W^{(n)}_{\lfloor 2n\ve\rfloor +1},\ldots, \wh W^{(n)}_{\lfloor 2n\ve\rfloor+n(\ve)}\big\}.$$
Here we used the (obvious) fact that the distribution of $R_n$ under $\P_m$
does not depend on $m$. 

By Theorem~\ref{freesnake4D}, the right-hand side of~\eqref{moment1tech3}
tends to $0$ as $n\to\infty$. Using also~\eqref{moment1tech2}, we obtain
that
$$\limsup_{n\to\infty} \P\Big(\Big|\frac{\log n(\ve)}{n(\ve)}\,R^{\bullet,\ve}_n - 4\pi^2\sigma^4\Big| > \delta\Big) \leq \delta.$$
Since $R^\bullet_n\geq R^{\bullet,\ve}_n$ and since both $\delta$ and $\ve$
can be chosen arbitrarily small, the statement of the lemma follows.
\endproof

\begin{theorem}
\label{rangecritisnake}
We have
$$\frac{\log n}{n} \,R^\bullet_n \build{\la}_{n\to\infty}^{L^2(\P)} 8\pi^2\sigma^4.$$
\end{theorem}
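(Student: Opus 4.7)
The plan is that everything needed has essentially been assembled in Proposition~\ref{secondmo} and Lemma~\ref{estimoment1}; the theorem follows by a short moment-matching argument. Write $L \colonequals 8\pi^2 \sigma^4$ and $X_n \colonequals \frac{\log n}{n} R^\bullet_n$. What we want is $\E[(X_n - L)^2] \to 0$, and expanding the square this is
\[
\E[(X_n - L)^2] = \E[X_n^2] - 2L\,\E[X_n] + L^2.
\]
So it suffices to show $\limsup \E[X_n^2] \leq L^2$ and $\lim \E[X_n] = L$, because then $\limsup \E[(X_n-L)^2] \leq L^2 - 2L\cdot L + L^2 = 0$.

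First I would observe that the upper bound $\limsup \E[X_n^2] \leq L^2$ is exactly Proposition~\ref{secondmo}. Next, by the Cauchy--Schwarz inequality $\E[R^\bullet_n] \leq \E[(R^\bullet_n)^2]^{1/2}$, so Proposition~\ref{secondmo} also immediately gives
\[
\limsup_{n\to\infty} \frac{\log n}{n}\,\E[R^\bullet_n] \leq L.
\]
Combining this with the matching lower bound $\liminf (\log n / n)\,\E[R^\bullet_n] \geq L$ from Lemma~\ref{estimoment1} yields $\lim \E[X_n] = L$, as required.

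Putting these two facts together finishes the $L^2$ convergence. There is no real obstacle here: the substantial work is already done in Proposition~\ref{secondmo} (the second moment upper bound, obtained by a careful Markov/independence decomposition of the snake at the stopping time $\tau_{p_n}$) and in Lemma~\ref{estimoment1} (the first moment lower bound, obtained by comparing $R^\bullet_n$ to the free-snake range $R_{n(\varepsilon)}$ along a sub-excursion on which Theorem~\ref{freesnake4D} can be applied). The fact that the constant is $8\pi^2 \sigma^4$ rather than $4\pi^2 \sigma^4$ (the free snake value) comes from the conditioning on $T = 2n+1$: in the second moment computation this manifests as the factor $\frac{1}{1-2\alpha}$ in Lemma~\ref{key4D1}, which doubles the contribution when one lets $\alpha \to 0$ while accounting for the two ends of the excursion. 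The present theorem is therefore essentially a bookkeeping consequence of the hard estimates already proved.
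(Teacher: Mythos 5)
Your proof is correct and follows essentially the same moment-matching route as the paper: combine the second-moment upper bound (Proposition~\ref{secondmo}) with the first-moment lower bound (Lemma~\ref{estimoment1}) in the expansion of $\E[(X_n-L)^2]$. The intermediate Cauchy--Schwarz step to show $\lim\E[X_n]=L$ exists is harmless but unnecessary; the paper goes directly from $\limsup\E[X_n^2]-2L\liminf\E[X_n]+L^2\le 0$, which is the same computation in one line.

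One small remark on your closing commentary: the factor $\tfrac{1}{1-2\alpha}$ in Lemma~\ref{key4D1} tends to $1$ as $\alpha\to 0$ and is therefore not what ``doubles'' the constant. The doubling from $4\pi^2\sigma^4$ to $8\pi^2\sigma^4$ simply reflects that $R^\bullet_n$ is built from $2n+1$ snake positions (a tree of $n+1$ vertices contributes a contour of length $2n$), so the normalization $\log n/n$ applied to a count of order $2n/\log n$ yields twice the free-snake constant. This does not affect the validity of the proof.
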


\proof By combining Proposition~\ref{secondmo} and Lemma~\ref{estimoment1}, we get that
\begin{align*}
&\limsup_{n\to\infty} \E\Big[(\frac{\log n}{n}R^\bullet_n- 8\pi^2\sigma^4)^2\Big]\\
&\qquad\leq\Big( \limsup_{n\to\infty} \E\big[(\frac{\log n}{n}R^\bullet_n)^2\big]\Big) - 16 \pi^2\sigma^4 
\Big(\liminf_{n\to\infty}
\E\big[\frac{\log n}{n}R^\bullet_n\big] \Big)+ (8\pi^2\sigma^4)^2 \leq 0
\end{align*}
which gives the desired result. \endproof

\medskip
Theorem \ref{rangecritisnake} and the remarks before Lemma \ref{estimate-excu} give the case $d=4$ of Theorem \ref{treeSRW}.

\subsection{Proof of the main estimate}
\label{mainest}

In this subsection, we prove Proposition~\ref{firstesti},
which was a key ingredient of the results of the previous
subsection. We first recall some basic facts. For every 
$x\in\Z^4$ and $k\geq 0$, we set
$$p_k(x)=P(S_k=x)$$
and we now denote the
Green function of the random walk $S$ by
$$G(x)=\sum_{k=0}^\infty p_k(x)$$
($G=G_\theta$ in the notation of Section 2). A standard estimate (see e.g.~\cite[Chapter 4]{LL}) states that
\begin{equation}
\label{esti-Green}
\lim_{x\to\infty} |x|^2\,G(x)= \frac{1}{2\pi^2\sigma^2}.
\end{equation}
Let $\bp$ be the period of the random walk $S$. Since $S$ is
assumed to be symmetric, we have $\bp=1$ or $2$. Then from the
local limit theorem (see e.g.~\cite[Chapter 2]{LL}), we have
\begin{equation}
\label{LLTbis}
\lim_{j\to\infty, j\in \bp\Z}\; j^2\,p_j(0)=\frac{\bp}{4\pi^2 \sigma^4}.
\end{equation}

We state our first lemma.

\begin{lemma}
\label{visitzero}
We have
$$\lim_{k\to\infty} k\,\P(\wh W_k=0)= \frac{1}{4\pi^2 \sigma^4}.$$
\end{lemma}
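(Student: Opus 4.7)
The key reduction is to condition on the lifetime process $(\zeta_j)_{0\le j\le k}$. Let $m_k \colonequals \min_{0\le j\le k}\zeta_j$ (so $m_k\le 0$ under $\P$). From the snake dynamics, the path $W_k$ coincides with $W_0$ on $(-\infty,m_k]$, while the increments $(W_k(\ell)-W_k(\ell-1))_{m_k<\ell\le\zeta_k}$ are independent $\theta$-steps, also independent of $W_0$. Under $\P=\P_0$ we have $W_0(m_k)=-S_{|m_k|}$, and the symmetry of $\theta$ lets us combine $-S_{|m_k|}$ with the $\zeta_k-m_k$ fresh steps into a single sum of
$$N_k\colonequals|m_k|+(\zeta_k-m_k)=\zeta_k-2m_k$$
i.i.d.\ $\theta$-distributed random variables. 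Consequently
$$\P(\wh W_k=0\mid(\zeta_j)_{0\le j\le k})=p_{N_k}(0),\qquad \P(\wh W_k=0)=\E[p_{N_k}(0)].$$

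The proof then combines Donsker's invariance principle (applied jointly to $\zeta$ and its running minimum) with the local limit theorem \eqref{LLTbis}. The invariance principle gives $N_k/\sqrt k\to U\colonequals B_1-2\inf_{[0,1]}B$ in distribution, where $B$ is a standard linear Brownian motion. Using the change of variables $u=b-2a$, $v=b$ in the classical joint density of $(\inf_{[0,1]}B, B_1)$, I would show that $U$ is chi-distributed with three degrees of freedom,
$$f_U(u)=\frac{2u^2}{\sqrt{2\pi}}\,e^{-u^2/2},\qquad u\ge 0,$$
so that
$$\E[U^{-2}]=\int_0^\infty\frac{1}{u^2}\,\frac{2u^2}{\sqrt{2\pi}}\,e^{-u^2/2}\,du=1.$$
Writing $k\,p_{N_k}(0)=(k/N_k^2)\cdot\bigl(N_k^2\,p_{N_k}(0)\bigr)$, the two factors converge to $1/U^2$ and to $\bp/(4\pi^2\sigma^4)$ respectively (by \eqref{LLTbis} along $\bp\Z$), leading heuristically to
$$k\,\P(\wh W_k=0)\longrightarrow \frac{\bp}{4\pi^2\sigma^4}\,\E[U^{-2}]=\frac{\bp}{4\pi^2\sigma^4}.$$
In the aperiodic case $\bp=1$ this is exactly the claimed value; in the periodic case $\bp=2$ the parity constraint $N_k\equiv k\pmod 2$ has to be paired with the usual parity in \eqref{LLTbis}, and the formula collapses to the same limit after the appropriate bookkeeping.

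The main obstacle will be the uniform integrability needed to upgrade the distributional statement to the convergence of expectations: the issue is that $p_j(0)$ is as large as $1$ at $j=0$ while $N_k$ can take small values with non-negligible probability. Using the uniform bound $p_j(0)\le C(1\vee j)^{-2}$ (itself a consequence of the local limit theorem), it suffices to control $k\,\E[(1\vee N_k)^{-2}\,\ind{N_k\le M}]$ for a cutoff $M=M(k)$. Here a ballot-type estimate based on Kemperman's formula \eqref{Kemp} and the local limit theorem \eqref{LLT} for the simple walk $\zeta$ gives $\P(N_k=n)=O(n^2/k^{3/2})$ for $n\ll\sqrt k$---the discrete analogue of the behavior $f_U(u)=O(u^2)$ near $u=0$---which yields
$$k\,\E\bigl[(1\vee N_k)^{-2}\,\ind{N_k\le M}\bigr]=O\bigl(M/\sqrt k\bigr).$$
With this in hand one truncates at $M=\epsilon\sqrt k$, applies the invariance principle together with the LLT on the bulk, and lets $\epsilon\downarrow 0$; everything else in the argument is a direct combination of weak convergence, the pointwise LLT, and the explicit Gaussian integral above.
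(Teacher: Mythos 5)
Your reduction is correct and identical to the paper's: conditionally on the lifetime process, $\wh W_k$ has the law of $S_{N_k}$ with $N_k = \zeta_k - 2\min_{j\le k}\zeta_j$, so $\P(\wh W_k=0)=\E[p_{N_k}(0)]$. From there your route diverges from the paper's in an interesting way. The paper invokes the \emph{discrete} Pitman theorem to realize $1+N_k$ as a Markov chain ($h$-transform of the reflected simple walk) and derives the exact formula
$$\P(N_k=m)=\frac{2(m+1)^2}{k+m+2}\,P_0(Y_k=m),$$
after which the claimed limit is read off directly from the one LLT estimate~\eqref{LLTbis}, with no weak-convergence step and no separate uniform integrability argument. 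You instead pass to the scaling limit $N_k/\sqrt k\Rightarrow U=B_1-2\inf_{[0,1]}B$, identify $U$ as chi-3 (your density and the computation $\E[U^{-2}]=\int_0^\infty\sqrt{2/\pi}\,e^{-u^2/2}\,du=1$ are both correct; this identification is of course the \emph{continuous} Pitman theorem), and then patch up the unbounded integrand $u\mapsto u^{-2}$ via the uniform bound $p_j(0)\le C(1\vee j)^{-2}$ together with the small-$n$ estimate $\P(N_k=n)=O(n^2/k^{3/2})$. Your outline of the truncation is sound: the small-$n$ contribution is $O(M/\sqrt k)=O(\epsilon)$ with $M=\epsilon\sqrt k$, the bulk converges by bounded convergence applied to $u\mapsto u^{-2}\ind{\{u\ge\epsilon\}}$, and the two pieces fit together as $\epsilon\downarrow 0$. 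Your proposed small-$n$ bound is also achievable exactly as you say, e.g.\ via the reflection-principle identity $\P(N_k=n)=(n+1)\big(\P(\zeta_k=n)-\P(\zeta_k=n+2)\big)$, which is just a rearranged form of the paper's exact formula. So the two proofs are genuinely different in mechanism but closely cognate: the paper's exact discrete formula delivers both the main asymptotics and the small-$n$ control in one line, whereas your version buys a shorter, more conceptual "main step'' at the cost of a separate tightness/uniform-integrability lemma.

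One caveat you should state more carefully concerns periodicity. You write that in the case $\bp=2$ the "formula collapses to the same limit after the appropriate bookkeeping,'' but it does not: when $\theta$ has period $2$, the parity constraint $N_k\equiv k\pmod 2$ forces $\P(\wh W_k=0)=0$ for all odd $k$, so the sequence $k\,\P(\wh W_k=0)$ has no limit (it oscillates between $0$ and $2/(4\pi^2\sigma^4)$ along parity classes). To be fair, the paper's own proof ("the result of the lemma follows using~\eqref{LLTbis}'') elides the same point, and downstream the lemma is only ever used through Ces\`aro-type sums for which the parity issue washes out; but in a self-contained write-up you should either assume $\bp=1$ or phrase the conclusion along the correct parity class.
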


\proof For every integer $k\geq 0$, set
$$\underline{\zeta}_k = \min_{0\leq j\leq k} \zeta_j,$$
and
$$X_k= \zeta_k - 2\underline{\zeta}_k.$$
From the construction of the discrete snake, and the fact that $S$ is symmetric,
the conditional distribution of $\wh W_k$ knowing that $X_k=m$ is the 
law of $S_m$. Consequently,
\begin{equation}
\label{visit0tech}
\P(\wh W_k=0)= \sum_{m=0}^\infty \P(X_k=m)\,p_m(0).
\end{equation}
Asymptotics for $P(S_m=0)=p_m(0)$ are given by~\eqref{LLTbis}.
We then need to evaluate $\P(X_k=m)$. Set $\wt X_k=1+X_k$
for every $k\geq 0$. The discrete version of Pitman's theorem
(see~\cite[Lemma 3.1]{pit0}) shows that, under the probability measure $\P$,
$(\wt X_k)_{k\geq 0}$ is a Markov chain on $\{1,2,\ldots\}$ with
transition kernel $\mathrm{Q}$ given by $\mathrm{Q}(1,2)=1$ and for
every $j\geq 2$,
$$\mathrm{Q}(j,j+1)= \frac{1}{2} \frac{j+1}{j}\,,\ \mathrm{Q}(j,j-1)= \frac{1}{2} \frac{j-1}{j}.$$
This Markov chain is also the discrete $h$-transform of simple random walk
on $\Z_+$
(killed upon hitting $0$)
corresponding to $h(j)=j$. Let $(Y_k)_{k\geq 0}$ stand for a simple
random walk on $\Z$ that starts from $\ell$ under the probability measure $P_\ell$,
and let $H_0=\inf\{n\geq 0: Y_n=0\}$. It follows from the preceding
observations that, for every integer $k\geq1$ and every $m\geq 1$ 
such that $1\leq m\leq k+1$ and $k+m$ is odd,
\begin{align*}
\P(\wt X_k=m)&= m \,P_1(Y_k=m, H_0>k)\\
&=m\Big( P_0(Y_k=m-1)-P_0(Y_k=m+1)\Big)\\
&=m\times 2^{-k}\left( {k\choose \frac{k+m-1}{2}} - 
{k\choose \frac{k+m+1}{2}}\right)\\
&= \frac{2m^2}{k+m+1}\,P_0(Y_k=m-1)
\end{align*}
Hence, for every $m\geq 0$,
\begin{equation}
\label{visit0tech2}
\P(X_k=m) = \frac{2(m+1)^2}{k+m+2}\,P_0(Y_k=m).
\end{equation}
From~\eqref{visit0tech} and~\eqref{visit0tech2}, we get
$$\P(\wh W_k=0) 
= \sum_{m=0}^k \frac{2}{k+m+2}\,\Big((m+1)^2p_m(0)\Big)\, P_0(Y_k=m),$$
and the result of the lemma follows using (\ref{LLTbis}). \endproof

In the next lemma, for every integer $k\geq 0$, we use the notation $\wt W_k$
for the time-shifted path $\wt W_k=(\wt W_k(j))_{j\leq 0}$, where $\wt W_k(j)\colonequals W_k(\zeta_k +j)$, for every $j\leq 0$. 

\begin{lemma}
\label{symmetry}
Let $k\geq 1$ such that $\P(\wh W_k=0)>0$. Under the conditional probability measure $\P(\cdot\mid \wh W_k=0)$, the two
pairs $(W_0, \wt W_k)$ and $(\wt W_k, W_0)$ have the same distribution.
\end{lemma}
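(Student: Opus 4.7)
My plan is to exhibit a measure-preserving involution on the underlying randomness of the snake that, under the conditioning $\wh W_k=0$, interchanges $W_0$ with $\wt W_k$. Conditionally on the lifetime trajectory $\zeta=(\zeta_0,\ldots,\zeta_k)$, write $m:=\min_{0\le n\le k}\zeta_n$ and $N:=\zeta_k$. The joint law of $(W_0,W_k)$ is determined by two independent i.i.d.~$\theta$-distributed families: the initial-walk increments $E_j:=W_0(j)-W_0(j-1)$ for $j\le 0$, and the snake-created increments $F_j:=W_k(j)-W_k(j-1)$ for $j\in\llbracket m+1,N\rrbracket$ (the latter are i.i.d.~$\theta$ because each $F_j$ is the step added at the last up-crossing of level $j$ in $\zeta$ before time $k$). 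Moreover $W_k(j)=W_0(j)$ for every $j\le m$, so the conditioning $\{\wh W_k=0\}$ reads $\sum_{j=m+1}^{0}E_j=\sum_{j=m+1}^{N}F_j$; and a direct inspection shows that the forward increments of $\wt W_k$ are $F_{N+j}$ for $j\in\llbracket m-N+1,0\rrbracket$ and $E_{N+j}$ for $j\le m-N$.

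I then define the map $\Phi\colon(\zeta,E,F)\mapsto(\zeta',E',F')$ by time-reversing the lifetime, $\zeta'_n:=\zeta_{k-n}-\zeta_k$ (so that $m':=\min\zeta'=m-N$ and $N':=\zeta'_k=-N$), together with the relabelings $E'_j:=F_{N+j}$ for $j\in\llbracket m'+1,0\rrbracket$, $E'_j:=E_{N+j}$ for $j\le m'$, and $F'_j:=E_{N+j}$ for $j\in\llbracket m'+1,N'\rrbracket$. Simple symmetric random walk is time-reversible, and $\Phi$ merely permutes two independent i.i.d.~$\theta$-families while preserving their joint i.i.d.~structure, so $\Phi$ is measure-preserving under the unconditioned law of $(\zeta,E,F)$. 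A short calculation rewrites $\sum_{j=m+1}^{0}E_j=\sum_{j=m+1}^{N}F_j$ as $\sum_{j=m'+1}^{0}E'_j=\sum_{j=m'+1}^{N'}F'_j$, i.e.\ the analogous conditioning $\{\wh W'_k=0\}$ for the reversed snake; thus $\Phi$ also preserves the conditioning event.

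The final, and main, step is to verify from the explicit increment expressions above that under $\Phi$ one has $W_0=\wt W'_k$ and $\wt W_k=W'_0$, where $(W'_0,\wt W'_k)$ is the pair constructed from $(\zeta',E',F')$ in exactly the same way that $(W_0,\wt W_k)$ is constructed from $(\zeta,E,F)$. Once this is established, the measure-preserving property implies that the conditioned law of $(W'_0,\wt W'_k)$ coincides with that of $(W_0,\wt W_k)$, and combining these two facts yields $(W_0,\wt W_k)\stackrel{d}{=}(\wt W'_k,W'_0)\stackrel{d}{=}(\wt W_k,W_0)$, as claimed.

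The bulk of the work, and the main potential obstacle, is the index bookkeeping: aligning the time-reversal of $\zeta$ with the relabeling of the $E$'s and $F$'s so that the forward-increment sequences of $(W_0,\wt W_k)$ and $(\wt W'_k,W'_0)$ coincide term by term. Everything else reduces to the independence structure of the snake and the elementary reversibility of simple symmetric random walk.
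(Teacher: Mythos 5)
Your proof is correct and takes essentially the same approach as the paper: you time-reverse the lifetime walk $\zeta$ and exploit the decomposition of $(W_0,\wt W_k)$ into a common tail plus two independent i.i.d.~$\theta$ increment families that get swapped, which is exactly the mechanism behind the paper's time-reversal symmetry $\pi_k(i,j)=\pi_k(j,i)$ for the pair $(-\min\zeta,\,\zeta_k-\min\zeta)$. The paper argues at the level of conditional distributions given $(i,j)$ and leaves the verification implicit, while you make the measure-preserving involution $\Phi$ fully explicit on the raw randomness $(\zeta,E,F)$; the index bookkeeping you outline does check out ($\Phi$ is an involution, preserves the conditioning event, and sends $W_0\mapsto\wt W'_k$, $\wt W_k\mapsto W'_0$).
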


\begin{proof}
Write $\pi_k(i,j)$, $i,j\geq 0$ for the joint distribution under $\P$ of the pair
$$\Big(- \min_{0\leq \ell\leq k} \zeta_\ell,\zeta_k- \min_{0\leq \ell\leq k} \zeta_\ell\Big).$$
By an easy time-reversal argument, we have $\pi_k(i,j)=\pi_k(j,i)$ for every
$i,j\geq 0$. On the other hand, under $\P$, conditionally on 
$$\Big(- \min_{0\leq \ell\leq k} \zeta_\ell,\zeta_k- \min_{0\leq \ell\leq k} \zeta_\ell\Big)=(i,j)$$
we have $W_0(-i-\ell)=W_k(-i-\ell)=\wt W_k(-j-\ell)$ for every $\ell\geq 0$, and
the two random paths 
$$\big(W_0(-i+\ell)-W_0(-i)\big)_{0\leq \ell\leq i}$$
and 
$$\big(\wt W_k(-j+\ell)-\wt W_k(-j)\big)_{0\leq \ell\leq j}=\big(W_k(-i+\ell)-W_0(-i)\big)_{0\leq \ell\leq j}$$
are independent and distributed as the random walk $S$ stopped respectively at time
$i$ and at time $j$. Note that the event $\{\wh W_k=0\}$ occurs if and only if the
latter two paths have the same endpoint. The statement of the lemma easily
follows from the preceding observations and the property $\pi_k(i,j)=\pi_k(j,i)$.
\end{proof}

Let us fix $\eta\in(0,1/4)$. Thanks
to Lemma~\ref{visitzero}, we may choose $\delta>0$ small enough so that,
for every sufficiently large $n$, 
$$\sum_{k=\lfloor (1-\delta)n\rfloor}^n \P(\wh W_k=0)< \eta.$$
We then observe that
\begin{align*}
1&=\sum_{k=0}^n \P\Big(\wh W_k=0; \wh W_\ell \not=0,\forall \ell\in\llbracket k+1,n\rrbracket\Big)\\
&=\sum_{k=0}^n  \E\Big[\ind{\{\wh W_k=0\}}\, \P_{(W_k)}\Big(\wh W_\ell \not=0,\forall \ell\in\llbracket 1,n-k\rrbracket\Big)\Big]\\
&=\sum_{k=0}^n  \E\Big[\ind{\{\wh W_k=0\}}\, \P_{(W_0)}\Big(\wh W_\ell \not=0,\forall \ell\in\llbracket 1, n-k\rrbracket\Big)\Big].
\end{align*}
In the second equality, we applied the Markov property of the discrete snake at time $k$, 
and in the third one we used Lemma~\ref{symmetry}. 

From the last equalities and our choice of $\delta$, it follows that, for $n$ large,
$$\E\Bigg[\Big(\sum_{k=0}^{\lfloor (1-\delta)n\rfloor}  \ind{\{\wh W_k=0\}}\Big)\, \P_{(W_0)}\Big(\wh W_\ell \not=0,\forall \ell\in\llbracket 1,
\lfloor\delta n\rfloor\rrbracket\Big)\Bigg]
\geq 1-\eta.$$

Next fix $\ve\in(0,1/2)$ and write $n(\ve)= \lfloor n^{\frac{1}{2}+\ve}\rfloor$ to simplify notation. 
For every integer $p\geq 1$, there exists a constant $C_{p,\ve}$ such that, for every 
$n\geq 1$,
$$\P(\tau_{n(\ve)}\leq n) \leq C_{p,\ve}n^{-p}.$$
Hence, we also get, for every sufficiently large $n$,
$$\E\Bigg[\Big(\sum_{k=0}^{\tau_{n(\ve)}-1}  \ind{\{\wh W_k=0\}}\Big) \P_{(W_0)}\Big(\wh W_\ell \not=0,\forall \ell\in\llbracket 1,
\lfloor\delta n\rfloor\rrbracket\Big)\Bigg]
\geq 1-2\eta.$$
By conditioning with respect to $W_0$, we see that the left-hand side of the preceding display is equal to
$$\E\Bigg[\E_{(W_0)}\Big[\sum_{k=0}^{\tau_{n(\ve)}-1}  \ind{\{\wh W_k=0\}}\Big]\, \P_{(W_0)}\Big(\wh W_\ell \not=0,\forall \ell\in\llbracket 1,
\lfloor\delta n\rfloor\rrbracket\Big)\Bigg].$$
We now note that, for every integer $m\geq 1$,
\begin{equation}
\label{Green}
\E_{(W_0)}\Bigg[\sum_{k=0}^{\tau_{m}-1}  \ind{\{\wh W_k=0\}}\Bigg]=
2\sum_{j=0}^{m-1} G(-W_0(-j)).
\end{equation}
(we could write $G(W_0(-j))$ instead of $G(-W_0(-j))$ because $S$
is symmetric, but the preceding formula would hold also in the non-symmetric case).
To derive formula~(\ref{Green}), first consider the case $m=1$. By a standard property 
of simple random walk, we have for every integer $i\geq 0$,
$$\E_{(W_0)}\Bigg[\sum_{k=0}^{\tau_1-1} \ind{\{\zeta_k=i\}} \Bigg]= 2.$$
Then using the conditional distribution of $W$ given the lifetime process $\zeta$, we obtain
\begin{align*}
\E_{(W_0)}\Bigg[\sum_{k=0}^{\tau_{1}-1}  \ind{\{\wh W_k=0\}}\Bigg]
&=\sum_{i=0}^\infty \E_{(W_0)}\Bigg[\sum_{k=0}^{\tau_{1}-1}  \ind{\{\zeta_k=i\}}\,\ind{\{\wh W_k=0\}}\Bigg]\\
&=\sum_{i=0}^\infty \E_{(W_0)}\Bigg[\sum_{k=0}^{\tau_{1}-1}  \ind{\{\zeta_k=i\}}\Bigg]\,p_i(-W_0(0))\\
&= 2\,G(-W_0(0)).
\end{align*}
(Of course here $W_0(0)=0$, but the previous calculation holds independently of the value of
$W_0(0)$.)
The same argument shows that, for every $j\in\llbracket1,m-1\rrbracket$,
$$\E_{(W_0)}\Bigg[\sum_{k=\tau_j}^{\tau_{{j+1}}-1}  \ind{\{\wh W_k=0\}}\Bigg]
= 2\,G(-W_0(-j))$$
and formula~(\ref{Green}) follows. 

From~(\ref{Green}) and the preceding considerations, we get that, for all sufficiently large $n$,
\begin{equation}
\label{keytech1}
2\,\E\Bigg[\Big(\sum_{j=0}^{n(\ve)-1} G(-W_0(-j))\Big) \P_{(W_0)}\Big(\wh W_\ell \not=0,\forall \ell\in\llbracket 1,
\lfloor\delta n\rfloor\rrbracket\Big)\Bigg]
\geq 1-2\eta.
\end{equation}
Now recall that, under the probability measure $\P$, $(-W_0(-j))_{j\geq 0}$ has the same distribution as $(S_j)_{j\geq 0}$. At this point we
need two other lemmas.

\begin{lemma}
\label{estimoGreen}
For every integer $p\geq 1$, there exists a
constant $C(p)$ such that, for every $n\geq 2$,
$$E\Big[ \Big(\sum_{j=0}^n G(S_j)\Big)^p\Big] \leq C(p)\,(\log n)^p.$$
\end{lemma}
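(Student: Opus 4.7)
The plan is to argue by induction on $p$, based on a uniform one-step estimate for $E_x[G(S_m)]$.

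\emph{Uniform one-step bound.} I would first establish that there is a constant $C>0$ such that
\[E_x\bigl[G(S_m)\bigr] \leq \frac{C}{m}, \qquad x\in\Z^4,\;m\geq 1.\]
Using the independence of an auxiliary copy $S'$ of $S$, the symmetry of $\theta$ (which yields $-S_m\stackrel{d}{=}S_m$), and convolution, one computes
\[E_x\bigl[G(S_m)\bigr] = \sum_{k\geq 0}\sum_{y\in\Z^4} p_m(y-x)\,p_k(y) = \sum_{k\geq 0} p_{m+k}(x) = \sum_{j\geq m} p_j(x),\]
and the local limit theorem in dimension $4$ gives $p_j(x)\leq Cj^{-2}$ uniformly in $x$, so summing over $j\geq m$ yields the bound. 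Specializing to $x=0$ and summing over $j\in\{0,\ldots,n\}$ proves the case $p=1$: with $L_n\colonequals \sum_{j=0}^n G(S_j)$, we get $E[L_n] \leq G(0)+C\log n$.

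\emph{Induction step.} Expand
\[E[L_n^p] \leq p!\,\sum_{0\leq j_1\leq\cdots\leq j_p\leq n} E\Bigl[\prod_{i=1}^p G(S_{j_i})\Bigr],\]
and for each ordered tuple condition on $\mathcal{F}_{j_{p-1}}$. The Markov property gives $E[G(S_{j_p})\mid\mathcal{F}_{j_{p-1}}] = E_{S_{j_{p-1}}}[G(S_{j_p-j_{p-1}})]$. If $j_p>j_{p-1}$, the uniform one-step estimate bounds this conditional expectation by $C/(j_p-j_{p-1})$, and summing over $j_p\in\{j_{p-1}+1,\ldots,n\}$ contributes a factor $C\log n$; if $j_p=j_{p-1}$, one uses the trivial bound $G(S_{j_p})\leq G(0)$. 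In either case the residual ordered sum over $(j_1,\ldots,j_{p-1})$ is controlled by the inductive hypothesis at level $p-1$, giving
\[\sum_{0\leq j_1\leq\cdots\leq j_p\leq n} E\Bigl[\prod_{i=1}^p G(S_{j_i})\Bigr] \leq C(p)\,(\log n)^p,\]
and hence $E[L_n^p]\leq C'(p)(\log n)^p$.

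The main obstacle is the uniformity in $x$ of the one-step bound $E_x[G(S_m)]\leq C/m$; without it, the nested Markov conditioning in the induction would accumulate position-dependent prefactors and degrade the $(\log n)^p$ growth. The symmetry of $\theta$, together with the uniform local limit theorem in $\Z^4$ available under the standing assumptions, delivers this uniformity, while the diagonal terms in the combinatorial expansion are absorbed cheaply by $G\leq G(0)$.
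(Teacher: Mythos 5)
Your argument is correct, and it takes a genuinely different route from the paper's. The paper reduces the problem to a known moment bound for the intersection local time of two independent random walks in $\Z^4$: after discarding the super-polynomially unlikely event $\{\max_{j\leq n}|S_j|>n\}$ and replacing $G$ by the truncated Green function $G_{n^3}$, one writes $\sum_{j\leq n}G_{n^3}(S_j)$ as a conditional expectation of $\sum_{j\leq n}\sum_{i\leq n^3}\ind{\{S_j=S'_i\}}$ given $S$, invokes the $(\log n)^p$ moment bound of Marcus and Rosen for that intersection count, and concludes by the $L^p$-contraction property of conditional expectation. Your approach instead proves the uniform one-step bound $E_x[G(S_m)]\leq C/m$ directly — the identity $E_x[G(S_m)]=\sum_{j\geq m}p_j(x)$ is correct under the symmetry hypothesis of Section~3 (one uses $P_0(S_k=z)=P_z(S_k=0)$ and Chapman--Kolmogorov), and the uniform LCLT bound $\sup_y p_j(y)\leq Cj^{-2}$ then finishes it — and runs a clean induction on the ordered $p$-fold sum, conditioning on $\mathcal{F}_{j_{p-1}}$ to peel off one $\log n$ at a time, with the diagonal $j_p=j_{p-1}$ absorbed by $G\leq G(0)$. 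Your proof is more self-contained and elementary (no appeal to the Marcus--Rosen lemma, no truncation of the Green function, no argument about the tail of $\max|S_j|$), at the cost of exploiting the symmetry of $\theta$ somewhat more explicitly; the paper's version is shorter given the external reference and avoids the combinatorics of the sorted expansion. Both deliver the same bound.
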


\proof We first observe that
$$E\Big[ \Big(\sum_{j=0}^n G(S_j)\Big)^p\Big] =
E\Big[ \Big(\sum_{j=0}^n G(S_j)\,\ind{\{|S_j|\leq n\}}\Big)^p\Big]+ o(1)$$
as $n\to\infty$, because the event where $\sup\{|S_j|:0\leq j\leq n\}>n$ has a probability 
which decreases to $0$ faster than any negative power of $n$. For every integer $k\geq 1$ and $x\in\Z^4$, set
$$G_k(x)=\sum_{i=0}^k p_i(x).$$
Using~\eqref{esti-Green} and the standard local limit theorem (see e.g.~\cite[Chapter 2]{LL}) one easily verifies  that, for every sufficiently large $n$, 
for all $x\in\Z^4$ such that $|x|\leq n$, the bound $G_{n^3}(x)\geq \frac{1}{2}G(x)$ holds.
Thanks to this observation, it is enough to bound
$$E\Big[ \Big(\sum_{j=0}^n G_{n^3}(S_j)\,\ind{\{|S_i|\leq n\}}\Big)^p\Big].$$
However, if $S'$ stands for another random walk with the same distribution
as $S$ but independent of $S$, we have
$$\sum_{j=0}^n G_{n^3}(S_j) = E\Big[\sum_{j=0}^n\sum_{i=0}^{n^3} \ind{\{S_j=S'_i\}}\,\Big|\, S\Big],$$
and by Lemma 1 in Marcus and Rosen~\cite{MR}, we know that there exists a constant 
$C'(p)$ such that, for every $n\geq 2$,
$$E\Big[\Big(\sum_{j=0}^n\sum_{i=0}^{n^3} \ind{\{S_j=S'_i\}}\Big)^p\Big]
\leq C'(p)\,(\log n)^p.$$
The desired bound follows since the conditional expectation is a contraction
in $L^p$. 
\endproof

\begin{lemma}
\label{keyestimate}
For every $\alpha>0$, there exists a constant $C_\alpha$ such that, for every integer $m\geq 2$, we have
$$P\Big(\Big|\sum_{k=0}^m G(S_k) - \frac{1}{4\pi^2 \sigma^4} \log m\Big|\geq \alpha \log m\Big)
\leq C_\alpha (\log m)^{-3/2}.$$
\end{lemma}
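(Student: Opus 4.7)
The plan is to establish sharp asymptotics for the mean of $Z_m:=\sum_{k=0}^m G(S_k)$ together with bounds on its second and fourth central moments, then apply Markov's inequality at the fourth moment. For the mean, symmetry of $\theta$ combined with Fubini and the identity $\sum_y p_k(y)p_i(y)=p_{k+i}(0)$ gives
\[ E[G(S_k)]=\sum_{i\geq 0}p_{k+i}(0)=\sum_{j\geq k}p_j(0),\]
and~\eqref{LLTbis} then yields $E[G(S_k)]=\frac{1}{4\pi^2\sigma^4 k}(1+o(1))$ as $k\to\infty$, so that $E[Z_m]=\frac{\log m}{4\pi^2\sigma^4}+O(1)$; it therefore suffices to concentrate $Z_m$ around its own mean.

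\textbf{Variance.} For $j<k$ the Markov property yields $E[G(S_j)G(S_k)]=E[G(S_j)\phi_{k-j}(S_j)]$ with $\phi_n(x)=E_x[G(S_n)]=\sum_{i\geq n}p_i(x)$, and a direct computation gives $E[\phi_{k-j}(S_j)]=\phi_k(0)=E[G(S_k)]$, so $\mathrm{Cov}(G(S_j),G(S_k))=\mathrm{Cov}(G(S_j),\phi_{k-j}(S_j))$ is a covariance of two functions of the single variable $S_j$. A triple-convolution Fourier computation, based on $\hat\theta(\xi)=1-\sigma^2|\xi|^2/2+o(|\xi|^2)$ near $\xi=0$, yields the leading asymptotics
\[ E[G(S_j)G(S_k)]=\frac{1}{(4\pi^2\sigma^4)^2}\cdot\frac{1}{j^2}\log\frac{k}{k-j}+\text{(lower order)},\]
while $E[G(S_j)]E[G(S_k)]\sim(4\pi^2\sigma^4)^{-2}/(jk)$. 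Summation over $j<k\leq m$ exhibits a cancellation of the leading $(\log m)^2$ contributions, with residual $\sum_{j<k\leq m}|\mathrm{Cov}(G(S_j),G(S_k))|\leq C\log m$; combined with $\sum_j E[G(S_j)^2]=O(\log m)$, this gives $\mathrm{Var}(Z_m)\leq C\log m$.

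\textbf{Fourth moment and conclusion.} An analogous four-index computation, extracting cancellation of the leading $(\log m)^4$ terms in $E[Z_m^4]$ against the combinatorial products of $E[Z_m]$, yields the CLT-scale bound $E[(Z_m-E[Z_m])^4]\leq C(\log m)^2$. Markov's inequality at the fourth moment then gives, for any $\alpha>0$ and $m$ large enough to absorb the $O(1)$ error in $E[Z_m]$,
\[ P\bigl(|Z_m-E[Z_m]|\geq\tfrac{\alpha}{2}\log m\bigr)\leq\frac{16\,C(\log m)^2}{\alpha^4(\log m)^4}=O_\alpha\bigl((\log m)^{-2}\bigr),\]
which is stronger than the required $(\log m)^{-3/2}$ bound.

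\textbf{Main obstacle.} The crux is that Lemma~\ref{estimoGreen} furnishes only the matching upper bounds $E[Z_m^2]\leq C(\log m)^2$ and $E[Z_m^4]\leq C(\log m)^4$, of the same order of magnitude as $(E[Z_m])^2$ and $(E[Z_m])^4$ respectively; pointwise moment estimates alone therefore cannot produce concentration. One must genuinely exhibit the cancellation of the leading $(\log m)^2$ and $(\log m)^4$ coefficients, which requires sharp (rather than crude) local-limit asymptotics for the multiple convolutions $\sum_x p_{j}(x)p_a(x)p_b(x)$ and their four-index analogues — this is where the bulk of the technical work lies.
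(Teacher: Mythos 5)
Your strategy---compute $E[Z_m]$ sharply, bound the second and fourth central moments, then apply Markov's inequality---is a genuinely different route from the paper's. The paper instead couples $S$ to a four-dimensional Brownian motion $B$ using Zaitsev's multidimensional extension of the Koml\'os--Major--Tusn\'ady strong approximation (this is exactly where the exponential-moment hypothesis on $\theta$ earns its keep), reduces to $\int_1^t\rho_s^{-2}\,\mathrm{d}s$ with $\rho=|B|$, and exploits Bessel scaling to write that integral as a sum of i.i.d.\ variables with exponential moments over the logarithmic stopping times $\gamma_k=\inf\{t:\rho_t=e^k\}$, so that Cram\'er's theorem gives exponential concentration. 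That route avoids the covariance-cancellation problem entirely.

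Your mean computation and the claimed covariance asymptotic $E[G(S_j)G(S_k)]\sim(4\pi^2\sigma^4)^{-2}j^{-2}\log\frac{k}{k-j}$ are in fact correct (the latter is the Frullani integral that appears when one inserts Gaussian approximations for $p_j$ and $\phi_{k-j}$), so the variance bound $\mathrm{Var}(Z_m)=O(\log m)$ is achievable. But, as your own final paragraph concedes, the proposal is a plan rather than a proof: the error control needed to justify the leading-order local-limit approximation uniformly over $j<k\le m$---in particular near the diagonal $k-j=O(1)$ and in the small-$|x|$ region where the lattice Green function $G$ is not comparable to $c|x|^{-2}$---is not carried out, and, more seriously, the fourth-moment claim $E[(Z_m-E[Z_m])^4]=O((\log m)^2)$ is only asserted. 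Proving it amounts to showing the fourth cumulant is $O(\log m)$, which requires controlling four-index sums over connected partitions with their own delicate cancellation structure; this is qualitatively harder than the two-index case, and nothing in the proposal indicates how to handle it. That computation is the technical core of the lemma under this approach, and it is missing.
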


We postpone the proof of Lemma~\ref{keyestimate}
and complete the proof of Proposition~\ref{firstesti}. An application of H\"older's inequality gives for $p\geq 2$,
\begin{align*}
&E\Bigg[ \Big(\sum_{j=0}^m G(S_j)\Big)\,\ind{\{\sum_{j=0}^m G(S_j) \geq (\frac{1}{4\pi^2 \sigma^4}+\alpha) \log m\}}\Bigg]\\
&\quad\leq E\Big[ \Big(\sum_{j=0}^m G(S_j)\Big)^p\Big]^{1/p} \,
P\Big(\sum_{j=0}^m G(S_j) \geq (\frac{1}{4\pi^2 \sigma^4}+\alpha) \log m
\Big)^{1/q}\\
&\quad\leq C(p)^{1/p} \log m\times 
P\Big(\Big|\sum_{j=0}^m G(S_j) - \frac{1}{4\pi^2 \sigma^4} \log m\Big|\geq \alpha \log m\Big)^{1/q}
\end{align*}
where $\frac{1}{p} +\frac{1}{q}=1$
and we used Lemma~\ref{estimoGreen}. Choosing $p\geq 4$ and using Lemma~\ref{keyestimate},
we obtain that
$$\lim_{m\to\infty} E\Bigg[ \Big(\sum_{j=0}^m G(S_j)\Big)\,\ind{\{\sum_{j=0}^m G(S_j) \geq (\frac{1}{4\pi^2 \sigma^4}+\alpha) \log m\}}\Bigg]=0.$$
From~(\ref{keytech1}) and the fact that $(-W_0(-j))_{j\geq 0}$ has the same distribution as $(S_j)_{j\geq 0}$, we 
then get, for every sufficiently large $n$,
\begin{align*}
&2\,(\frac{1}{4\pi^2 \sigma^4}+\alpha) (\log n(\ve))\;\P\Big(\wh W_\ell \not=0,\forall \ell\in\llbracket1,
\lfloor\delta n\rfloor\rrbracket\Big)\\
&\geq 1-2\eta - 2\,\E\Bigg[ \Big(\sum_{j=0}^{n(\ve)-1} G(-W_0(-j))\Big)\,\ind{\{\sum_{j=0}^{n(\ve)-1} G(-W_0(-j))) \geq (\frac{1}{4\pi^2 \sigma^4}+\alpha) \log n(\ve)\}}\Bigg]\\
&\geq 1-3\eta.
\end{align*}
Since $\log n(\ve)\leq(\frac{1}{2}+\ve)\log n$, the preceding bound implies that
$$\liminf_{n\to\infty} (\log n)\,\P\Big(\wh W_\ell \not=0,\forall \ell\in\llbracket1,
\lfloor\delta n\rfloor\rrbracket\Big) \geq \frac{1-3\eta}{1+2\ve}\;(\frac{1}{4\pi^2 \sigma^4}+\alpha)^{-1}.$$
Now note that the ratio $\log \lfloor\delta n\rfloor/ \log n$ tends to $1$ as $n\to\infty$, and that
$\eta$, $\ve$ and $\alpha$ can be chosen arbitrarily small. We conclude that
$$\liminf_{n\to\infty} (\log n)\,\P\Big(\wh W_\ell \not=0,\forall \ell\in\llbracket 1,
n\rrbracket\Big)\geq 4\pi^2 \sigma^4.$$

The proof of the analogous result for the limsup behavior is similar. In the same way as we proceeded
above, we arrive at the bound
$$\E\Bigg[\Bigg(\sum_{k=0}^{n}  \ind{\{\wh W_k=0\}}\Bigg) \P_{(W_0)}\Big(\wh W_\ell \not=0,\forall \ell\in\llbracket1,
n\rrbracket\Big)\Bigg]
\leq 1.$$
At this point, we would like to replace the sum from $k=0$ to $n$ by a sum from 
$k=0$ to $\tau_{n'(\ve)-1}$, where $n'(\ve)= \lfloor n^{\frac{1}{2}-\ve}\rfloor$ for some fixed 
$\ve\in(0,1/2)$. Simple arguments give the existence of a 
constant $C'_\ve$ such that, for every integer $n\geq 1$,
$$\P( \tau_{n'(\ve)}\geq n) \leq C'_\ve\, n^{-\ve/2}.$$
We can then write
$$1\geq \E\Bigg[\Big(\sum_{k=0}^{\tau_{n'(\ve)}-1}  \ind{\{\wh W_k=0\}}\Big) \P_{(W_0)}\big(\wh W_\ell \not=0,\forall \ell\in\llbracket 1,
n\rrbracket\big)\Bigg]
- \E\Bigg[\ind{\{\tau_{n'(\ve)}\geq n\}}\,\Big(\sum_{k=0}^{\tau_{n'(\ve)}-1}  \ind{\{\wh W_k=0\}}\Big)\Bigg],$$
and by the Cauchy-Schwarz inequality, we have
\begin{equation}
\label{boundCS}
\E\Bigg[\ind{\{\tau_{n'(\ve)}\geq n\}}\,\Big(\sum_{k=0}^{\tau_{n'(\ve)}-1}  \ind{\{\wh W_k=0\}}\Big)\Bigg]
\leq (C'_\ve\, n^{-\ve/2})^{1/2} \times 
\E\Bigg[\Big(\sum_{k=0}^{\tau_{n'(\ve)}-1}  \ind{\{\wh W_k=0\}}\Big)^2\Bigg]^{1/2}.
\end{equation}
To bound the expectation in the right-hand side, one can verify that, for every 
integer $m\geq 1$,
$$\E\Bigg[\Big(\sum_{k=0}^{\tau_{m}-1}  \ind{\{\wh W_k=0\}}\Big)^2\,\Bigg|\,W_0\Bigg]
\leq 4\Bigg(\sum_{j=0}^{m-1} G(-W_0(-j))\Bigg)^2 + 4 \,\sum_{j=0}^{m-1} \Phi(-W_0(-j))$$
where, for every $x\in \Z^4$, 
$$\Phi(x)\colonequals \sum_{y\in \Z^4} G(y)G(x-y)^2.$$
The proof of the latter bound is similar to that of~\eqref{Green} above, and we leave the details to
the reader.
One then checks from~(\ref{esti-Green}) that there exists a constant $\wt C$ such that
$$\Phi(x)\leq \wt C\,(|x|\vee 1)^{-2}\,(1+\log(|x|\vee 1)), \quad\hbox{for every } x\in \Z^4.$$ It easily follows that
$$\E\Bigg[\Bigg(\sum_{k=0}^{\tau_{m}-1}  \ind{\{\wh W_k=0\}}\Bigg)^2\Bigg]=O((\log m)^2)$$
as $m\to\infty$. Consequently the right-hand side of~(\ref{boundCS}) tends to $0$ as $n\to\infty$
and if $\eta>0$ is fixed, we have, for all $n$ sufficiently large,
$$\E\Bigg[\Bigg(\sum_{k=0}^{\tau_{n'(\ve)}-1}  \ind{\{\wh W_k=0\}}\Bigg) \P_{(W_0)}\Big(\wh W_\ell \not=0,\forall \ell\in\llbracket1,
n\rrbracket\Big)\Bigg]\leq 1+\eta.$$
Just as we obtained~(\ref{keytech1}), we deduce from the latter bound that
\begin{equation}
\label{keytech2}
2\,\E\Bigg[\Bigg(\sum_{j=0}^{n'(\ve)-1} G(-W_0(-j))\Bigg) \P_{(W_0)}\Big(\wh W_\ell \not=0,\forall \ell\in\llbracket1,
n\rrbracket\Big)\Bigg]
\leq 1+\eta.
\end{equation}
Then fix $\alpha\in(0,(4\pi^2\sigma^4)^{-1})$. It follows from (\ref{keytech2}) that
\begin{align*}
&2(\frac{1}{4\pi^2 \sigma^4}-\alpha) (\log n'(\ve))\,\E\Bigg[\ind{\{\sum_{j=0}^{n'(\ve)-1} G(-W_0(-j)) \geq (\frac{1}{4\pi^2 \sigma^4}-\alpha) \log n'(\ve)\}}\,\P_{(W_0)}
\Big(\wh W_\ell \not=0,\forall \ell\in\llbracket1,
n\rrbracket\Big)\Bigg]\\
&\leq 2\,\E\Bigg[ \Big(\sum_{j=0}^{n'(\ve)-1} G(-W_0(-j))\Big)\,\P_{(W_0)}\Big(\wh W_\ell \not=0,\forall \ell\in\llbracket1,
n\rrbracket\Big)\Bigg]\\
&\leq 1+\eta.
\end{align*}
On the other hand, 
$$(\log n'(\ve))\,\P\bigg(\sum_{j=0}^{n'(\ve)-1} G(-W_0(-j)) < (\frac{1}{4\pi^2 \sigma^4}-\alpha) \log n'(\ve)\bigg)
\build{\longrightarrow}_{n\to\infty}^{} 0$$
by Lemma~\ref{keyestimate}. By combining the last two displays, we get
$$\limsup_{n\to\infty} \,2(\frac{1}{4\pi^2 \sigma^4}-\alpha) (\log n'(\ve)) \,
\P\Big(\wh W_\ell \not=0,\forall \ell\in\llbracket 1,
n\rrbracket\Big)\leq 1+\eta.$$
Since $\eta$, $\ve$ and $\alpha$ can be chosen arbitrarily small, we get
$$\limsup_{n\to\infty} \,(\log n)\,\P\Big(\wh W_\ell \not=0,\forall \ell\in\llbracket 1,
n\rrbracket\Big)\leq 4\pi^2 \sigma^4,$$
which completes the proof of the first assertion of Proposition~\ref{firstesti}. The second
assertion is an easy consequence of the first one, noting that, for every $\ve>0$, 
both $\P(\tau_p\geq p^{2+\ve})$ and $\P(\tau_p\leq p^{2-\ve})$ are $o((\log p)^{-1})$
as $p\to\infty$. 
\hfill $\square$

\medskip
\noindent{\it Proof of Lemma~\ref{keyestimate}.} The general strategy of the proof is
to derive an analogous result for Brownian motion in $\R^4$, and then to
use a strong invariance principle to transfer this result to the random walk $S$. 

We let $B=(B_t)_{t\geq 0}$ be a four-dimensional
Brownian motion started from $0$ and set $\rho_t=|B_t|$ for every $t\geq 0$, so
that $(\rho_t)_{t\geq 0}$ is a four-dimensional Bessel process
started from $0$. 
Here is the Brownian motion version of Lemma~\ref{keyestimate}.

\begin{lemma}
\label{key-estimate2}
Let $\ve>0$. There exist two constants $C(\ve)$ and $\beta(\ve)>0$ such that,
for every $t>r\geq 1$,
$$P\bigg(\bigg|\int_r^t \frac{\mathrm{d}s}{\rho_s^2} - \frac{1}{2} \log (\frac{t}{r})\bigg| > \ve \log (\frac{t}{r})\bigg)\leq C(\ve)\,(\frac{t}{r})^{-\beta(\ve)}.$$
\end{lemma}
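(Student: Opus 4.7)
The strategy is to reduce the statement to a Laplace-transform type identity for the additive functional $A_T := \int_1^T du/\rho_u^2$ obtained from an Itô exponential martingale, and to extract polynomial tail bounds from it via Cauchy--Schwarz and Bessel scaling.

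\emph{Step 1 (Scaling reduction).} The Brownian scaling $(\rho_{cs}/\sqrt{c})_{s\ge 0} \stackrel{d}{=} (\rho_s)_{s\ge 0}$ makes the law of $\int_r^t du/\rho_u^2$ depend only on $T := t/r$. Setting $L := \log T$, it suffices to prove that for every $\ve>0$,
\[
P\bigl(|A_T - L/2| > \ve L\bigr) \le C(\ve) T^{-\beta(\ve)}, \qquad T\ge 1.
\]

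\emph{Step 2 (Key exponential identity).} Since $d\rho_s = (3/2\rho_s)\,ds + d\gamma_s$, Itô's formula shows that for every $\mu \in \R$, the process
\[
\mathcal{E}^\mu_s \;\colonequals\; (\rho_s/\rho_1)^\mu \exp\!\Bigl(-\alpha(\mu)\int_1^s \frac{du}{\rho_u^2}\Bigr), \qquad s\ge 1,\quad \alpha(\mu):=\tfrac{\mu(\mu+2)}{2},
\]
is a continuous local martingale starting at $1$ at time $s=1$. For $|\mu|<1$ this is in fact a true martingale, since it is the Radon--Nikodym density of the change of measure turning $(\rho_s)_{s\ge 1}$ into a Bessel process of dimension $4+2\mu\in(2,6)$, which does not reach $0$. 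Taking expectations yields the central identity
\[
E\bigl[(\rho_T/\rho_1)^\mu \, e^{-\alpha(\mu) A_T}\bigr] \;=\; 1, \qquad |\mu|<1.
\]

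\emph{Step 3 (Cauchy--Schwarz tail bounds).} For the upper tail, take $\mu \in (-1,0)$ small, so that $\alpha(\mu)<0$ and $|\alpha(\mu)|=|\mu|(2-|\mu|)/2$. On $\{A_T > aL\}$ one has $e^{|\alpha(\mu)|A_T} \ge T^{|\alpha(\mu)|a}$, whence Step 2 gives
\[
E\bigl[(\rho_T/\rho_1)^\mu\,\mathbf{1}_{\{A_T>aL\}}\bigr] \le T^{-|\alpha(\mu)|a}.
\]
By Cauchy--Schwarz, $P(A_T>aL)^2 \le T^{-|\alpha(\mu)|a}\, E[(\rho_T/\rho_1)^{-\mu}]$. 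A direct estimate, using the Bessel scaling $\rho_T \stackrel{d}{=} \sqrt{T}\,\rho_1$ (marginally) and the finiteness of $E[\rho_1^{\pm 2\mu}]$ for $|\mu|<2$, gives $E[(\rho_T/\rho_1)^{-\mu}] \le C\,T^{|\mu|/2}$. Combining,
\[
P(A_T>aL) \;\le\; C\, T^{\,|\mu|/4 \,-\, |\alpha(\mu)|\,a/2}.
\]
Setting $a = 1/2+\ve$ and expanding for small $|\mu|$ (so $|\alpha(\mu)|\sim|\mu|$), the exponent is $\sim -|\mu|\ve/2$, yielding the desired polynomial decay. The lower tail $P(A_T < (1/2-\ve)L)$ is handled symmetrically by taking $\mu\in(0,1)$ small, so that $\alpha(\mu)>0$ and the bound $e^{-\alpha(\mu)A_T}\ge T^{-\alpha(\mu)a}$ on $\{A_T<aL\}$ produces the analogous polynomial bound.

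\emph{Main obstacle.} The principal delicate point is justifying that $\mathcal{E}^\mu$ is a true martingale (not just local), so that the central identity of Step 2 can be invoked; this is standard via the interpretation as a dimension-change Radon--Nikodym density for Bessel processes, but one must check uniform integrability on $[1,T]$ for the chosen range of $\mu$. Once this identity is in hand, all the remaining estimates are elementary consequences of Cauchy--Schwarz and the Bessel scaling.
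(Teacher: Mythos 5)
Your proof is correct but takes a genuinely different route from the paper's. The paper reduces to $r=1$ by scaling and then decomposes $\int_1^T\mathrm{d}s/\rho_s^2$ into i.i.d.\ increments $X_k=\int_{\gamma_k}^{\gamma_{k+1}}\mathrm{d}s/\rho_s^2$ along the hitting times $\gamma_k$ of the levels $e^k$; it checks that $X_0$ has small exponential moments via the Bessel density (Revuz--Yor XI.1.22), invokes Cram\'er's large deviation theorem, and separately controls the boundary pieces $\int_1^{\gamma_0}$ and $\int_{e^{2n}}^{\gamma_n}$. You instead encode the same change of measure in the exponential martingale $\mathcal{E}^\mu_s=(\rho_s/\rho_1)^\mu e^{-\alpha(\mu)\int_1^s\mathrm{d}u/\rho_u^2}$ to obtain the identity $E\big[(\rho_T/\rho_1)^\mu e^{-\alpha(\mu)A_T}\big]=1$, and then extract the two one-sided tail bounds by Cauchy--Schwarz together with the scaling of $\rho_T$. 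This sidesteps the block decomposition, the boundary integrals and the identification of $E[X_0]$, and makes the constant $1/2$ visible directly as $\lim_{\mu\to 0}\mu/(2\alpha(\mu))$; both arguments ultimately rest on the same ingredient, namely absolute continuity between Bessel laws of different dimensions. The one delicate point, which you correctly flag, is that $\mathcal{E}^\mu$ must be a true martingale rather than merely a local one; this holds conditionally on $\mathcal F_1$ (the conditioning is needed since $\rho_0=0$) because for $\mu>-1$ the target dimension $4+2\mu>2$ yields a process that never hits zero. One asymmetry worth noting in your Step 3: for the upper tail ($\mu<0$) the Cauchy--Schwarz factor $E[(\rho_T/\rho_1)^{-\mu}]\le C\,T^{|\mu|/2}$ grows and the decay comes from $T^{-|\alpha(\mu)|a}$, whereas for the lower tail ($\mu>0$) the factor $E[(\rho_T/\rho_1)^{-\mu}]\le C\,T^{-\mu/2}$ itself decays and overcomes the growing $T^{\alpha(\mu)a}$; in both cases the net exponent is strictly negative once $|\mu|$ is chosen small enough in terms of $\ve$, so the argument closes.
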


Let us postpone the proof of Lemma~\ref{key-estimate2}. We fix 
$\alpha>0$ and consider an integer $n\geq 1$.
By an extension due to Zaitsev~\cite{Zaitsev} of the celebrated Koml\' os-Major-Tusn\' ady strong
invariance principle, we can construct on the same probability space 
the finite sequence $(S_1,\ldots,S_n)$ and the Brownian motion $(B_t)_{t\geq 0}$,
in such a way that, for some constants $c>0, c'>0$ and $K>0$ that do not depend on
$n$, we have
$$E\Big[ \exp\Big(c \max_{1\leq k\leq n} | S_k- \sigma B_k|\Big)\Big] 
\leq K\, \exp(c'\log n).$$
It readily follows that we can find constants $C>0$ and $a>0$ (again independent of $n$)
such that
$$P\Big(\max_{1\leq k\leq n} | S_k- \sigma B_k| > C \log n\Big)\leq K n^{-a}.$$
Let  $A>2$ be a constant. Then 
$$P\Big(\inf_{t\geq (\log n)^4} \sigma |B_t| \leq AC \log n\Big)
= P\Big(\inf_{t\geq1} \sigma |B_t| \leq \frac{AC}{\log n}\Big)= O((\log n)^{-2})$$
by an easy estimate. On the event
$$E_n:= \Big\{\max_{1\leq k\leq n} | S_k- \sigma B_k| \leq C \log n\Big\}
\cap \Big\{\inf_{t\geq (\log n)^4} \sigma |B_t| > AC \log n\Big\}$$
we have, for every integer $k$ such that $(\log n)^4\leq k \leq n$,
$$|S_k|\geq \sigma |B_k|- C\log n \geq (1-\eta) \sigma|B_k|$$
and 
$$|S_k|\leq \sigma |B_k|+ C\log n \leq (1+\eta) \sigma|B_k|$$
where $\eta=1/A$. We now fix $A$ so that
$\eta\in(0,\frac{1}{5})$ and $5\eta < \pi^2\sigma^4\alpha/2$. 

Recalling our estimate~(\ref{esti-Green}), we also see that (provided 
$n$ is large enough) we have on the event $E_n$,
for every integer $k$ such that $(\log n)^4\leq k \leq n$,
$$(1-3\eta)\frac{1}{2\pi^2\sigma^4}\,|B_k|^{-2} \leq G(S_k) \leq (1+3\eta) \frac{1}{2\pi^2\sigma^4}\,|B_k|^{-2}.$$
Consequently, we have on the event $E_n$,
$$(1-3\eta)\frac{1}{2\pi^2\sigma^4}\,\sum_{k=\lceil(\log n)^4\rceil}^n|B_k|^{-2} \leq 
\sum_{k=\lceil(\log n)^4\rceil}^n G(S_k) \leq (1+3\eta) \frac{1}{2\pi^2\sigma^4}\,\sum_{k=\lceil(\log n)^4\rceil}^n|B_k|^{-2}.$$
The next step is to observe that 
$$\sum_{k=\lceil(\log n)^4\rceil}^n|B_k|^{-2}$$
is close to
$$\int_{\lceil(\log n)^4\rceil}^{n+1} \frac{\mathrm{d}s}{|B_s|^2}$$
up to a set of small probability. Indeed simple estimates show that, for any choice of $\kappa>0$, we have
$$\sup_{0\leq k\leq n}\;\sup_{k\leq s\leq k+1} |B_s-B_k| \leq \kappa\log n$$
outside of a set of probability $O(n^{-1})$. 
By choosing $\kappa$ suitably, we then see that on the event 
$$\wt E_n \colonequals E_n\cap \Big\{\sup_{0\leq k\leq n}\;\sup_{k\leq s\leq k+1} |B_s-B_k| \leq \kappa\log n\Big\}$$ 
we have
$$(1-\eta)\int_{\lceil(\log n)^4\rceil}^{n+1} \frac{\mathrm{d}s}{|B_s|^2}
\leq \sum_{k=\lceil(\log n)^4\rceil}^n|B_k|^{-2}
\leq (1+\eta) \int_{\lceil(\log n)^4\rceil}^{n+1} \frac{\mathrm{d}s}{|B_s|^2},$$
and consequently
\begin{equation}
\label{discre-conti}
(1-5\eta)\frac{1}{2\pi^2\sigma^4}\int_{\lceil(\log n)^4\rceil}^{n+1} \frac{\mathrm{d}s}{|B_s|^2}\leq 
\sum_{k=\lceil(\log n)^4\rceil}^n G(S_k) \leq (1+5\eta) \frac{1}{2\pi^2\sigma^4}\,
\int_{\lceil(\log n)^4\rceil}^{n+1} \frac{\mathrm{d}s}{|B_s|^2}.
\end{equation}

We also need to bound the quantity
$$\sum_{k=0}^{\lceil(\log n)^4\rceil-1} G(S_k).$$
However, from Lemma \ref{estimoGreen} with $p=2$, we immediately get that, for 
every integer $m\geq 2$ and every $h>0$,
\begin{equation}
\label{easy-bound}
P\Big(\sum_{k=0}^m G(S_k)\geq h\Big) \leq \frac{C(2)(\log m)^2}{h^2}.
\end{equation}

Finally,
\begin{align*}
&P\Big(\Big|\sum_{k=0}^n G(S_k) - \frac{1}{4\pi^2 \sigma^4} \log n\Big|\geq \alpha \log n\Big)\\
&\quad\leq P\Big(\sum_{k=0}^{\lceil(\log n)^4\rceil} G(S_k)\geq \frac{\alpha}{2}\log n\Big)
+ P\Big(\Big|\sum_{k=\lceil(\log n)^4\rceil}^n G(S_k) - \frac{1}{4\pi^2 \sigma^4} \log n\Big|\geq \frac{\alpha}{2} \log n\Big).
\end{align*}
The first term in the right-hand side is $O((\log n)^{-3/2})$ by~(\ref{easy-bound}). On the other hand, by~(\ref{discre-conti}), the second term
is bounded by
$$P(\wt E_n^c) + P\Big(\Big| \int_{\lceil(\log n)^4\rceil}^{n+1} \frac{\mathrm{d}s}{|B_s|^2} - \frac{1}{2} \log n\Big| \geq \alpha'\log n\Big)$$
where $\alpha'=( \frac{1}{2}  \pi^2 \sigma^4 \alpha )\wedge \frac{1}{4}$
is a constant independent of $n$, which satisfies
$$(1+5\eta)(\frac{1}{2}+\alpha')\frac{1}{2\pi^2\sigma^4} <
\frac{1}{4\pi^2\sigma^4}+\frac{\alpha}{2}\hbox{ \ and \ }
(1-5\eta)(\frac{1}{2}-\alpha')\frac{1}{2\pi^2\sigma^4} > \frac{1}{4\pi^2\sigma^4}-\frac{\alpha}{2}\;.$$
(Here we use our choice of $\eta$ such that  $5\eta < \pi^2\sigma^4\alpha/2$.)
From preceding estimates, we have $P(\wt E_n^c)=O((\log n)^{-2})$.
On the other hand, Lemma~\ref{key-estimate2} implies that
$$P\Big(\Big| \int_{\lceil(\log n)^4\rceil}^{n+1} \frac{\mathrm{d}s}{|B_s|^2} - \frac{1}{2} \log n\Big| \geq \alpha'\log n\Big)
= O(n^{-b})$$
for some $b>0$. This completes the proof of Lemma~\ref{keyestimate}. 
\hfill $\square$

\medskip
\noindent{\it Proof of Lemma~\ref{key-estimate2}.} By a scaling argument, it is enough to
consider the case $r=1$, and we consider only that case. For every integer $k\geq 0$,
set
$$\gamma_k\colonequals \inf\big\{t\geq 0: \rho_t =e^k\big\}$$
and 
$$X_k:=\int_{\gamma_k}^{\gamma_{k+1}} \frac{\mathrm{d}s}{\rho_s^2}.$$
A scaling argument shows that the variables $X_k$, $k\geq 0$ are identically
distributed. Moreover, the strong Markov property of the Bessel process implies that
the variables $X_k$, $k\geq 0$ are independent. Furthermore, the absolute continuity 
relations between Bessel processes can be used to verify that these variables have small
exponential moments. More precisely, using the explicit form of the density
of the law over the time interval $[0,t]$ of the four-dimensional Bessel process started at $1$ 
with respect to Wiener measure (see question 3 in Exercise~\uppercase\expandafter{\romannumeral11}.1.22 of Revuz and Yor~\cite{RY}),
it is an easy exercise of martingale theory to verify that
$$E\big[e^{3X_0/8}\big]= E\Big[\exp \frac{3}{8}\int_{\gamma_0}^{\gamma_1} \frac{\mathrm{d}s}{\rho_s^2}\Big]
= \sqrt{e} < \infty.$$
Set
$$c_0=E[X_0]=E[X_k]$$
for every $k\geq 0$. We can apply Cram\'er's large deviation theorem to the sequence $(X_k)_{k\geq 0}$. It
follows that, for every $\delta>0$, there exists a constant $b(\delta)>0$ such that for every 
sufficiently large $n$,
\begin{equation}
\label{Cramer1}
P\Big(\Big| \int_{\gamma_0}^{\gamma_n} \frac{\mathrm{d}s}{\rho_s^2} - c_0 n\Big| > \delta n\Big) \leq \exp(-b(\delta)n).
\end{equation}
On the other hand, it is easy to verify that the variable
$$ \int_1^{\gamma_0}\frac{\mathrm{d}s}{\rho_s^2} $$
has exponential moments. Just use the above-mentioned argument
involving the density of the law of the Bessel process to verify that
$$E\Big[\exp\Big(\frac{3}{8} \int_1^{\gamma_0}\frac{\mathrm{d}s}{\rho_s^2} \Big)\Big] <\infty$$
(deal separately with the cases $1<\gamma_0$ and $\gamma_0<1$). It then follows that, for every
$\delta >0$, and for all sufficiently large $n$,
$$P\Big( \int_1^{\gamma_0}\frac{\mathrm{d}s}{\rho_s^2}  >\delta n\Big) \leq \exp(-b'(\delta)n)$$
with some constant $b'(\delta)>0$. The same bound holds for the variable
$$ \int_{e^{2m}}^{\gamma_m}\frac{\mathrm{d}s}{\rho_s^2},$$
for any integer $m\geq 0$, since this variable has the same law
as 
$$ \int_1^{\gamma_0}\frac{\mathrm{d}s}{\rho_s^2} $$
by scaling. 

By combining the latter facts with~(\ref{Cramer1}), we obtain that, for every $\delta >0$,
there exists a constant $\wt b(\delta)>0$ such that, for every sufficiently large $n$,
\begin{equation}
\label{Cramer2}
P\Big(\Big| \int_{1}^{e^{2n}} \frac{\mathrm{d}s}{\rho_s^2} - c_0 n\Big| > \delta n\Big) \leq \exp(-\wt b(\delta)n).
\end{equation}
At this stage, we can identify the constant $c_0$, since the preceding arguments also
show that
$$c_0= \lim_{n\to\infty} \frac{1}{n} \,E\Big[ \int_{1}^{e^{2n}} \frac{\mathrm{d}s}{\rho_s^2}\Big] = 1$$
by a direct calculation of $E[(\rho_s)^{-2}]=(2s)^{-1}$. Once we know that $c_0=1$, the statement
of Lemma~\ref{key-estimate2} follows from~(\ref{Cramer2}) by elementary considerations: For
every $t\geq 1$, choose $n$ such that $e^{2n}\leq t<e^{2(n+1)}$ and observe that
$$\Big\{ \int_1^t \frac{\mathrm{d}s}{\rho_s^2} - \frac{1}{2}\log t > \ve \log t\Big\}
\subseteq \Big\{ \int_1^{e^{2(n+1)}} \frac{\mathrm{d}s}{\rho_s^2} - n > 2 \ve n\Big\}$$
whereas 
$$\Big\{ \int_1^t \frac{\mathrm{d}s}{\rho_s^2} - \frac{1}{2}\log t < - \ve \log t\Big\}
\subseteq \Big\{ \int_1^{e^{2n}} \frac{\mathrm{d}s}{\rho_s^2} - n - 1 <- 2 \ve n\Big\}.$$
This completes the proof. \hfill $\square$

\section{The range of branching random walk}

In this last section, we apply the preceding results to asymptotics for
the range of branching random walk in $\Z^d$, $d\geq 4$. We assume
that the offspring distribution $\mu$ is critical and has finite variance
$\sigma_\mu^2>0$, and that the jump distribution $\theta$
is centered and has finite moments of order $d-1$ (and as usual that 
$\theta$ is not supported on a strict subgroup of $\Z^d$).

Let $M_{\rm p}(\Z^d)$ stand for the set of all finite point measures
on $\Z^d$. Let $\z=(\z_n)_{n\geq 0}$ denote the
(discrete time) branching random walk
with jump distribution $\theta$ and offspring distribution $\mu$. This is
the Markov chain with values in $M_{\rm p}(\Z^d)$, whose transition
kernel $\mathsf{Q}$ can be described as follows. If 
$$\omega= \sum_{i=1}^p \delta_{x_i} \in M_{\rm p}(\Z^d),$$
$\mathsf{Q}(\omega,\cdot)$ is the distribution of
$$\sum_{i=1}^p \sum_{j=1}^{\xi_i} \delta_{x_i+Y_{i,j}},$$
where $\xi_1,\ldots,\xi_p$ are independent
and distributed according to $\mu$ and, conditionally on
$(\xi_1,\ldots,\xi_p)$, the random variables $Y_{i,j}$, $1\leq i\leq p$,
$1\leq j\leq \xi_i$, are independent and distributed according to $\theta$. 
More informally, each particle alive at time
$n$ is replaced at time $n+1$ by a number of offspring distributed 
according to $\mu$, and the spatial position of each of these
offspring is obtained by adding a jump distributed according to $\theta$
to the position of its parent.

The range of $\z$ is then defined by
$$\mathsf{R}(\z):= \#\{x\in\Z^d: \exists n\geq 0, \z_n(x)\geq 1\}.$$
We also write $\mathsf{N}(\z)$ for the total progeny of $\z$,
$$\mathsf{N}(\z):= \sum_{n=0}^\infty \langle \z_n,1 \rangle $$
where $\langle \z_n,1 \rangle$ is the total mass of $\z_n$. It is well
known (and easy to prove using the Lukasiewisz path introduced
in the proof of Theorem \ref{mainsuper}) that $\mathsf{N}(\z)$ has the distribution 
of the hitting time of $-\langle \z_0,1\rangle$ by a random walk 
on $\Z$ with
jump distribution $\nu(k)=\mu(k+1)$, for $k=-1,0,1,\ldots$, started from $0$.

\begin{proposition}
\label{BRW5}
Suppose that $d\geq 5$. For every integer $p\geq 1$, let $\z^{(p)}$ be
a branching random walk
with jump distribution $\theta$ and offspring distribution $\mu$, such that
$\langle \z_0^{(p)},1\rangle = p$. Then,
$$\lim_{p\to\infty}\; \frac{\mathsf{R}(\z^{(p)})}{\mathsf{N}(\z^{(p)})} = c_{\mu,\theta}
\quad\hbox{in probability,}$$
where $c_{\mu,\theta}>0$ is the constant in Theorem \ref{subaddi}. Consequently,
$$ \frac{1}{p^2}\; \mathsf{R}(\z^{(p)}) \build{\la}_{p\to\infty}^{\rm (d)} \frac{c_{\mu,\theta}}{\sigma_\mu^2}\;J$$
where the positive random variable $J$ has density
$(2\pi s^3)^{-1/2}\;\exp(-\frac{1}{2s})$ on $(0,\infty)$.
\end{proposition}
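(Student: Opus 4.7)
\medskip

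The second assertion follows from the first by Slutsky's lemma, once we establish that $\mathsf{N}(\z^{(p)})/p^2$ converges in distribution to $J/\sigma_\mu^2$. The latter is a routine consequence of Donsker's invariance principle applied to the Lukasiewisz random walk recalled in the introduction: $\mathsf{N}(\z^{(p)})$ is distributed as the hitting time of $-p$ by a centered random walk with variance $\sigma_\mu^2$, and after Brownian scaling this hitting time has the stated density.

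For the first assertion, the plan is to decompose $\z^{(p)}$ into the $p$ independent family lineages descending from the initial particles. Assuming that all initial particles lie at the origin (the general case needing only trivial modifications), each lineage produces a spatial tree $\t^*_i$ distributed according to $\Pi^*_{\mu,\theta}$. Write $N_i = \#\t_i$, $\mathcal{R}_i = \mathcal{R}(\t^*_i)$ and $V_i\subset\Z^d$ for the set of spatial positions visited by $\t^*_i$, and set $X_x = \#\{i:x\in V_i\}$ for $x\in\Z^d$. Then $\mathsf{N}(\z^{(p)}) = \sum_{i=1}^p N_i$ and
\begin{equation*}
\sum_{i=1}^p \mathcal{R}_i \;\geq\; \mathsf{R}(\z^{(p)}) \;\geq\; \sum_{i=1}^p \mathcal{R}_i - \sum_{x\in\Z^d}(X_x-1)_+,
\end{equation*}
which reduces the convergence $\mathsf{R}(\z^{(p)})/\mathsf{N}(\z^{(p)}) \to c_{\mu,\theta}$ in probability to (a) $\sum_i \mathcal{R}_i = c_{\mu,\theta}\,\mathsf{N}(\z^{(p)}) + o(p^2)$ in probability and (b) $\sum_x (X_x-1)_+ = o(p^2)$ in probability.

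For (a), the tail asymptotic $\Pi_\mu(\#\t\geq n)\sim cn^{-1/2}$ (coming from Kemperman's formula and the local limit theorem) places $(N_i)_{1\leq i\leq p}$ in the domain of attraction of a one-sided stable law of index $1/2$; in particular $\mathsf{N}(\z^{(p)})$ is of order $p^2$ in probability and $\sum_i N_i^2 = O(p^4)$ in probability. Conditioning on the size vector $(N_i)$ makes the spatial trees independent, and Theorem~\ref{mainsuper} gives $\mathcal{R}_i/N_i \to c_{\mu,\theta}$ in probability as $N_i\to\infty$; the bound $\mathcal{R}_i/N_i\in[0,1]$ upgrades this to $L^2$-convergence, so $\bE[\mathcal{R}_i\mid N_i=n] = c_{\mu,\theta} n + o(n)$ and $\mathrm{Var}(\mathcal{R}_i\mid N_i=n) = o(n^2)$ as $n\to\infty$. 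The mean defect $\sum_i(\bE[\mathcal{R}_i|N_i]-c_{\mu,\theta}N_i)$ is then controlled by truncating at a threshold $A_p\to\infty$: the small-tree part is dominated by $\sum_i N_i\,\ind{\{N_i<A_p\}}$, which has expectation $O(p\sqrt{A_p})=o(p^2)$, and the large-tree part is at most $o(1)\cdot\sum_i N_i$. The centered term $\sum_i(\mathcal{R}_i-\bE[\mathcal{R}_i|N_i])$ is handled by Chebyshev's inequality, using $\sum_i\mathrm{Var}(\mathcal{R}_i|N_i) \leq A_p\sum_i N_i + \eta(A_p)\sum_i N_i^2$ with $\eta(A_p)\to 0$; both terms can be made $o(p^4)$ by choosing $A_p$ growing sufficiently slowly.

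For (b), set $q(x)=\Pi^*_{\mu,\theta}(x\in V)$. Markov's inequality applied to the occupation count $N_x$ of the spatial tree gives $q(x)\leq\bE[N_x]=G_\theta(x)$, and the Green-function estimate $G_\theta(x)\leq C|x|^{2-d}$ (valid under the $d-1$ moment assumption on $\theta$) yields $q(x)\leq C(|x|\vee 1)^{2-d}$. Since $X_x$ is binomial with parameters $p$ and $q(x)$, a direct computation gives
\begin{equation*}
\bE\Big[\sum_{x\in\Z^d}(X_x-1)_+\Big] = \sum_x\big(p\,q(x)-1+(1-q(x))^p\big) \leq \sum_x \min\!\big(p\,q(x),\,\tfrac{1}{2}p^2q(x)^2\big),
\end{equation*}
and splitting the sum at the scale $|x|\asymp p^{1/(d-2)}$ where the two arguments of the minimum cross, an elementary calculation yields $O(p^{d/(d-2)})$; since $d/(d-2)<2$ for $d\geq 5$ this is $o(p^2)$, and Markov's inequality concludes (b). The main technical obstacle is (a), namely converting the single-tree in-probability convergence of Theorem~\ref{mainsuper} into a simultaneous control over the random forest; the stable-law structure of $(N_i)$, which concentrates most of the mass of $\sum_i N_i$ on a handful of macroscopic trees, combined with the $L^2$-upgrade, is what makes the argument go through.
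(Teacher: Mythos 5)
Your proof is essentially correct and takes a genuinely different route from the paper. The paper identifies the finitely many ``macroscopic'' trees (those of size $\geq \alpha p^2$) via the height-process convergence \eqref{heightforest2}, observes that these account for all but $\ve p^2$ of the total progeny, applies Theorem~\ref{mainsuper} to the Poisson-many conditioned trees $\Pi_\mu(\cdot\mid\#\t\geq\alpha p^2)$, and controls pairwise overlaps among them by a conditional first-moment bound: the key point is that conditioning on $\#\t\geq\alpha p^2$ turns the visitation probability $P(y\in V)$ into something $\leq (C_\alpha p\, G_\theta(y))\wedge 1$, whose square sums to $o(1)$ by dominated convergence. Your approach instead works with all $p$ lineages: you exploit the stable-$\tfrac12$ tail of $\Pi_\mu(\#\t\geq n)$ to run a truncation-plus-Chebyshev argument (upgrading Theorem~\ref{mainsuper} to $L^2$ via bounded convergence, a step the paper avoids by only handling finitely many trees), and you bound the overlap defect $\sum_x(X_x-1)_+$ directly via a binomial second-moment computation split at scale $|x|\asymp p^{1/(d-2)}$. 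Both arguments are correct and comparable in length; yours is perhaps more self-contained (no height-process input), while the paper's isolates the macroscopic contribution more transparently.

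One caveat that deserves more than a parenthetical: your part~(b) relies on $X_x$ being binomial, i.e., on all initial particles sitting at the origin. The proposition allows arbitrary initial positions (the paper even flags that they may depend on $p$), and for distinct starting points $a_1,\ldots,a_p$ the variable $X_x$ is a sum of independent Bernoullis with unequal parameters $q(x-a_i)$, so the split at $|x|\asymp p^{1/(d-2)}$ no longer makes sense. The fix is genuine but short: $E[(X_x-1)_+]\leq\min\bigl(Q(x),\tfrac12 Q(x)^2\bigr)$ with $Q(x)=\sum_i q(x-a_i)$, and the interpolation $\min(a,\tfrac12 a^2)\leq 2^{\gamma-1}a^{2-\gamma}$ combined with Minkowski's inequality in $\ell^{2-\gamma}$ gives $\sum_x Q(x)^{2-\gamma}\leq p^{2-\gamma}\sum_x q(x)^{2-\gamma}$, which is finite and $o(p^2)$ for $0<\gamma<(d-4)/(d-2)$ and $d\geq 5$. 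Since this ingredient is what makes the ``trivial modifications'' actually trivial, it ought to be spelled out rather than asserted.
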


\proof We may and will assume that there exists a sequence
$\t^{1},\t^{2},\ldots $ of independent random trees
distributed according to $\Pi_\mu$, such that, 
for every $p\geq 1$, the genealogy
of $\z^{(p)}$ is coded by $\t^{1},\t^{2},\ldots ,\t^{p}$, meaning that
$\t^{i}$ is the genealogical tree of the descendants of the 
$i$-th initial particle of $\z^{(p)}$, for every $p\geq 1$ and $i\in\{1,\ldots,p\}$.
Notice that we have then
$$\mathsf{N}(\z^{(p)})=\#\t^1+\cdots+\#\t^p.$$
 For every $i\in\{1,\ldots,p\}$, we will write 
$\mathcal{S}^{(p)}_i$ for the set of all spatial locations occupied by
the particles of $\z^{(p)}$ that are descendants of the $i$-th initial particle.
Note that the location 
of the $i$-th initial particle may depend on $p$. Clearly, we have
\begin{equation}
\label{bdrangeBRW}
\mathsf{R}(\z^{(p)}) \leq \#\mathcal{S}^{(p)}_1 +\cdots + \#\mathcal{S}^{(p)}_p.
\end{equation}

Let $(H_k)_{k\geq 0}$ be the height process associated with the sequence
$\t^{1},\t^{2},\ldots $ (see the proof of
Proposition \ref{rangecondi}). Then, as an easy 
consequence of \eqref{heightforest}, we have the joint convergence
in distribution
\begin{equation}
\label{heightforest2}
\Big((\frac{1}{p} H_{\lfloor p^2t\rfloor\wedge \mathsf{N}(\z^p)})_{t\geq 0}, \frac{1}{p^2}\,\mathsf{N}(\z^{(p)})\Big)
\build{\la}_{p\to\infty}^{\rm(d)} \Big((\frac{2}{\sigma_\mu} |\beta_{t\wedge
J_{1/\sigma_\mu}}|)_{t\geq 0}, J_{1/\sigma_\mu}\Big),
\end{equation}
where $\beta$ is a standard linear Brownian motion, and for every $s\geq 0$,
$J_s=\inf\{t\geq 0: L^0_t(\beta)>s\}$, where $(L^0_t(\beta))_{t\geq 0}$
is the local time process of $\beta$ at level $0$. See \cite[Section 1.4]{LG1}
for details of the derivation of \eqref{heightforest2}. 

Fix $\ve>0$. For $\alpha\in(0,1)$, let
$i_{p,1},i_{p,2},\ldots,i_{p,m_p}$ be all indices $i\in\{1,\ldots,p\}$
such that $\#\t^{i}\geq \alpha p^2$. It follows from 
\eqref{heightforest2} that, if $\alpha$
has been chosen sufficiently small, the bound 
\begin{equation}
\label{boundsizetree}
\mathsf{N}(\z^{(p)})-(\#\t^{i_{p,1}}+\cdots + \#\t^{i_{p,m_p}})=
\sum_{i\in\{1,\ldots,p\}\backslash\{i_{p,1},\ldots,i_{p,m_p}\}}
\#\t^i < \ve\,p^2
\end{equation}
will hold with probability arbitrarily close to $1$, uniformly for
all sufficiently large $p$. On the other hand, it also follows from 
\eqref{heightforest2} that $m_p$ converges
in distribution as $p\to\infty$ to a Poisson distribution with
parameter $\sigma_\mu^{-1}\,\sqrt{2/\pi \alpha}$ (here the quantity
$\sqrt{2/\pi \alpha}$ is the mass that the It\^o excursion measure
assigns to excursions of length greater than $\alpha$). In
particular, by choosing $\alpha$ even smaller if necessary, we have $P(m_p\geq 1)>1-\ve$ 
for all $p$ large enough. We now fix $\alpha>0$ so that the preceding properties hold for all $p$ large enough.

Next we observe that, conditionally on $m_p$, the trees $\t^{i_{p,1}},\ldots,
\t^{i_{p,m_p}}$ are independent and distributed according 
to $\Pi_\mu(\cdot\mid \#\t \geq \alpha p^2)$. From Theorem
\ref{mainsuper}, we now get that
\begin{equation}
\label{estBRW1}
P\Bigg(\Big|\frac{\#\mathcal{S}^{(p)}_{i_{p,1}} +\cdots+ \#\mathcal{S}^{(p)}_{i_{p,m_p}}}
{\#\t^{i_{p,1}}+\cdots + \#\t^{i_{p,m_p}}} - c_{\mu,\theta}\Big|>\ve\;
\Bigg|\;m_p\geq 1\Bigg)
\build{\la}_{p\to\infty}^{} 0.
\end{equation}
Then, on the one hand, we have from \eqref{bdrangeBRW},
$$\mathsf{R}(\z^{(p)} )\leq \#\mathcal{S}^{(p)}_{i_{p,1}} +\cdots+ \#\mathcal{S}^{(p)}_{i_{p,m_p}} + 
\sum_{i\in\{1,\ldots,p\}\backslash\{i_{p,1},\ldots,i_{p,m_p}\}}
\#\t^i,$$
and on the other hand,
$$\mathsf{R}(\z^{(p)} )\geq \#\mathcal{S}^{(p)}_{i_{p,1}} +\cdots+ \#\mathcal{S}^{(p)}_{i_{p,m_p}} -
\sum_{1\leq k<\ell\leq m_p} \#(\mathcal{S}^{(p)}_{i_{p,k}}\cap \mathcal{S}^{(p)}_{i_{p,\ell}}).
$$
Taking into account the bound \eqref{boundsizetree} and the fact
that $p^{-2}\mathsf{N}(\z^{(p)})$ converges in distribution to a positive 
random variable, we see that the first assertion of the proposition will
follow from the last two bounds and \eqref{estBRW1}, provided we can
verify that
\begin{equation}
\label{estBRW2}
\frac{1}{p^2}
\sum_{1\leq k<\ell\leq m_p} \#(\mathcal{S}^{(p)}_{i_{p,k}}\cap \mathcal{S}^{(p)}_{i_{p,\ell}}) \build{\la}_{p\to\infty}^{(P)} 0.
\end{equation}

Recall that $m_p$ converges in distribution to a finite random variable.
In order to establish \eqref{estBRW2}, it is enough to verify that,
if $\mathcal{S}^{(p),1}$, respectively $\mathcal{S}^{(p),2}$,
is the set of points visited by a random walk
indexed by a tree distributed according to $\Pi_\mu(\cdot\mid \#\t\geq \alpha p^2)$, with the spatial location of the root equal to $x_1$, resp. to $x_2$,
and if $\mathcal{S}^{(p),1}$ and $\mathcal{S}^{(p),2}$ are independent,
we have
$$\frac{1}{p^2}\, E\Big[\#(\mathcal{S}^{(p),1}\cap \mathcal{S}^{(p),2})\Big]
 \build{\la}_{p\to\infty}^{} 0.$$
 However,
$$E\Big[\#(\mathcal{S}^{(p),1}\cap \mathcal{S}^{(p),2})\Big]
=\sum_{y\in\Z^d} P(y\in \mathcal{S}^{(p),1})\,P(y\in \mathcal{S}^{(p),2})
\leq \sum_{y\in\Z^d} P(y\in \mathcal{S}^{(p),1})^2,$$
using the Cauchy-Schwarz inequality and translation invariance, which
also allows us to take $x_1=0$. By a first moment argument,
we have then
$$P(y\in \mathcal{S}^{(p),1}) \leq \frac{G_\theta(y)}{\Pi_\mu(\#\t \geq \alpha p^2)} \wedge 1 \leq (c_{(\mu)}^{-1}\sqrt{\alpha}\,p\,G_\theta(y)) \wedge 1,$$
where the constant $c_{(\mu)}>0$ depends only on $\mu$.
Here we used the classical bound 
$$\Pi_\mu(\#\t \geq k)\geq c_{(\mu)}\,k^{-1/2},\quad k\geq 1,
$$
which follows from the fact  that the distribution
of $\#\t$ under $\Pi_\mu$ coincides with the law of the first 
hitting time of $-1$ by a random walk 
on $\Z$ with
jump distribution $\nu$ started from $0$
(see the proof of Theorem \ref{mainsuper}). 
Finally, we have
$$\frac{1}{p^2}\, E\Big[\#(\mathcal{S}^{(p),1}\cap \mathcal{S}^{(p),2})] 
\leq \sum_{y\in\Z^d} (c_{(\mu)}^{-2}\alpha\,G_\theta(y)^2 )\wedge \frac{1}{p^2}$$
and the right-hand side tends to $0$ as $p\to\infty$ by dominated
convergence, noting that
$$\sum_{y\in\Z^d} G_\theta(y)^2<\infty$$
by \eqref{Greenbound}. This completes the proof of the first assertion
of the proposition.

The second assertion follows from the first one and the convergence
in distribution of
$p^{-2}\mathsf{N}(\z^{(p)})$ to $J_{1/\sigma_\mu}$. Just note that
$J_{1/\sigma_\mu}$ has the same law as $\sigma_\mu^{-2}J_1$
by scaling, and that $J_1$ is distributed as the first hitting of $1$
by a standard linear Brownian motion, whose density is as
stated in the proposition. \endproof

\medskip
We now state the result corresponding to Proposition \ref{BRW5}
in the critical dimension $d=4$. As previously, we must restrict our attention
to the geometric offspring distribution. 

\begin{proposition}
\label{BRW4}
Suppose that $d=4$, and that $\mu$ is the critical geometric
offspring distribution. Also assume that 
$\theta$ is symmetric and has small exponential moments, and set $\sigma^2=({\rm det}(M_\theta))^{1/4}$.  For every integer $p\geq 1$, let $\z^{(p)}$ be
a branching random walk
with jump distribution $\theta$ and offspring distribution $\mu$, such that
$\langle \z_0^{(p)},1\rangle = p$. Then,
$$\lim_{p\to\infty} \frac{(\log p)\,\mathsf{R}(\z^{(p)})}{\mathsf{N}(\z^{(p)})} = 8\pi^2\,\sigma^4,
\quad\hbox{in probability.}$$
Consequently,
$$ \frac{\log p}{p^2} \;\mathsf{R}(\z^{(p)}) \build{\la}_{p\to\infty}^{\rm (d)} 4\pi^2\,\sigma^4\;J\,,$$
where $J$ is as in Proposition \ref{BRW5}.
\end{proposition}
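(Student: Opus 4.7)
The plan is to adapt the strategy of Proposition~\ref{BRW5} to the critical dimension $d=4$, substituting Theorem~\ref{rangecritisnake} for Theorem~\ref{mainsuper}. Represent $\z^{(p)}$ by its genealogical forest: let $\t^1,\ldots,\t^p$ be i.i.d.\ critical geometric Galton--Watson trees with independent $\theta$-random-walk spatial labels, and write $\mathcal{S}^{(p),i}$ for the set of spatial positions of descendants of the $i$-th initial particle, so that $\mathsf{N}(\z^{(p)})=\sum_i \#\t^i$ and $\mathsf{R}(\z^{(p)})$ is sandwiched between $\sum_i \#\mathcal{S}^{(p),i}$ and this sum minus $\sum_{k<\ell}\#(\mathcal{S}^{(p),k}\cap\mathcal{S}^{(p),\ell})$.

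Fix small $\alpha,\ve>0$ and let $I_p=\{i:\#\t^i\geq \alpha p^2\}$, $m_p=\#I_p$, with enumeration $i_{p,1},\ldots,i_{p,m_p}$. The scaling limit \eqref{heightforest2} of the forest's height process to reflected Brownian motion stopped at a local-time epoch gives, verbatim as in Proposition~\ref{BRW5}: (i) $m_p$ converges in law to a finite Poisson variable; (ii) the sum of small-tree sizes is at most $\ve p^2$ with probability $\geq 1-\ve$ once $\alpha$ is small enough; (iii) $\mathsf{N}(\z^{(p)})/p^2\to J/2$ in distribution. Conditionally on $I_p$, the large trees are i.i.d.\ with common law $\Pi_\mu(\cdot\mid \#\t\geq\alpha p^2)$; applying Theorem~\ref{rangecritisnake} together with an absolute-continuity argument transferring the conditioning from $\#\t=n+1$ to $\#\t\geq \alpha p^2$ (in the spirit of Proposition~\ref{rangecondi}), I obtain $(\log p)\,\mathcal{R}(\t^{i_{p,j}})/\#\t^{i_{p,j}}\to 8\pi^2\sigma^4$ in probability for each $j$, and summing yields
$$
\sum_{j=1}^{m_p}\mathcal{R}(\t^{i_{p,j}})=\big(8\pi^2\sigma^4+o_P(1)\big)\,\frac{\mathsf{N}(\z^{(p)})}{\log p}.
$$
Small trees contribute at most $\sum_{i\notin I_p}\#\t^i\leq \ve p^2$ to $\mathsf{R}(\z^{(p)})$, which is negligible after letting $\ve\downarrow 0$.

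The hardest step is the pairwise overlap bound, analogous to \eqref{estBRW2},
$$
\frac{\log p}{p^2}\sum_{k<\ell}\#(\mathcal{S}^{(p),i_{p,k}}\cap\mathcal{S}^{(p),i_{p,\ell}})\build{\la}_{p\to\infty}^{P}0.
$$
The proof of Proposition~\ref{BRW5} reduces such a bound to $\sum_{y\in\Z^d}G_\theta(y)^2<\infty$, which fails in $d=4$ since $G_\theta(y)\sim |y|^{-2}$. The naive first-moment bound $P(y\in\mathcal{S}^{(p),i})\leq C p\,G_\theta(y)\wedge 1$ only gives $\sum_y P^2=O(p^2)$, short by exactly a factor $\log p$ of what is needed. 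The remedy combines two ingredients: (a) the confinement of the snake range to a ball of radius $O(p^{1/2})$ (cf.~Janson--Marckert~\cite{JM}), and (b) a hitting estimate $P(y\in\mathcal{S}^{(p),i})=O(1/\log p)$ uniform in $y$ outside a small ball $B(0,r_0)$ around the root; this is the analog of Proposition~\ref{firstesti} for hitting an arbitrary target point rather than returning to the origin and is the main technical ingredient I would need to establish. Once it is available, choosing $r_0=o(p^{1/2}/(\log p)^{1/4})$ makes $\#B(0,r_0)=o(p^2/\log p)$, and the splitting
$$
\sum_y P(y\in\mathcal{S}^{(p),i})^2\;\leq\;\#B(0,r_0)\;+\;\Big(\sup_{y\notin B(0,r_0)}P(y\in\mathcal{S}^{(p),i})\Big)\cdot E[\#\mathcal{S}^{(p),i}]
$$
is $o(p^2/\log p)$ because $E[\#\mathcal{S}^{(p),i}]=O(p^2/\log p)$ by the previous step.

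Combining the three ingredients (large-tree asymptotics, negligible small-tree contribution, negligible overlap) gives the first assertion. The second assertion then follows from the first, Slutsky's lemma, and the convergence $\mathsf{N}(\z^{(p)})/p^2\to J/2$ in distribution recalled in~(iii).
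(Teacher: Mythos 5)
Your overall structure follows the paper's intended strategy closely: decompose the forest into a Poisson number of large Galton--Watson trees via the height-process scaling limit, apply Theorem~\ref{rangecritisnake} to each large tree, and argue that the pairwise overlaps are negligible. However, two of your steps contain genuine errors, the second of which is the crux.

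First, the conversion $(\log p)\,\mathcal{R}(\t^{i_{p,j}})/\#\t^{i_{p,j}}\to 8\pi^2\sigma^4$ drops a factor of $2$. Theorem~\ref{rangecritisnake} normalizes by $\log n$ with $n+1=\#\t$; since $\#\t^{i_{p,j}}\asymp p^2$, one has $\log\#\t^{i_{p,j}}\approx 2\log p$, so the missing ratio $(\log p)/\log\#\t^{i_{p,j}}\to\tfrac12$ gives $(\log p)\,\mathcal{R}(\t^{i_{p,j}})/\#\t^{i_{p,j}}\to 4\pi^2\sigma^4$. Propagated through the sum this would yield $\frac{(\log p)\,\mathsf{R}}{\mathsf{N}}\to 4\pi^2\sigma^4$ and, using $\mathsf{N}/p^2\to J/\sigma_\mu^2=J/2$, $\frac{\log p}{p^2}\mathsf{R}\to 2\pi^2\sigma^4 J$; you should re-examine where the extra factor $2$ in your (and the proposition's) constants is supposed to come from, as Theorem~\ref{rangecritisnake} alone does not supply it.

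Second, and more seriously, the overlap bound rests on a claimed uniform hitting estimate $P(y\in\mathcal{S}^{(p),i})=O(1/\log p)$ for $|y|>r_0$ with $r_0=o(p^{1/2}/(\log p)^{1/4})$, and this estimate is \emph{false}. Already for any fixed $y$ in the support of $\theta$, a child of the root lands at $y$ with probability at least $\theta(y)>0$, so $P(y\in\mathcal{S}^{(p),i})$ is bounded below by a positive constant independent of $p$. More broadly, Proposition~\ref{firstesti} gives the \emph{non}-return probability as $O(1/\log n)$ --- i.e.\ the return probability tends to $1$ --- and the same critical-dimension recurrence forces $P(y\in\mathcal{S}^{(p),i})$ to stay of order one throughout the regime $|y|\ll p^{1/2}$, dropping to $O(1/\log p)$ only at the diffusive scale $|y|\asymp p^{1/2}$, where $\#B(0,r_0)\asymp p^2$ is too large. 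Hence the supremum in your splitting is $\Theta(1)$, and the resulting bound on $\sum_y P(y\in\mathcal{S}^{(p),i})^2$ is $\Theta(p^2/\log p)$, not $o(p^2/\log p)$. Any workable argument needs a hitting bound that \emph{decays} in $|y|$ (say a conditioned version of the second-moment estimate in Remark~2 after Proposition~\ref{suffcond}, giving something of the shape $P(y\in\mathcal{S}^{(p),i})\lesssim \min(1,\,p\,G_\theta(y)/\log p)$ away from the root, which would yield $\sum_y P^2=O(p^2\log\log p/(\log p)^2)$); a $y$-uniform cap outside a small ball cannot do the job. The paper itself only asserts ``same lines, minor modifications,'' so you are right to identify the overlap term as the technical gap, but the specific lemma you propose to establish is the wrong one and would not close it.
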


The proof of Proposition \ref{BRW4} goes along the same lines as that of
Proposition \ref{BRW5}, using now Theorem \ref{rangecritisnake} instead
of Theorem \ref{mainsuper}. A few minor modifications are needed, but
we will leave the details to the reader. 

\medskip
\noi{\it Acknowledgement.} The first author would like to thank Itai Benjamini
for suggesting the study of the range of the discrete snake a few years
ago.

\end{document}